\def\C{\mathbb{C}}
\def\R{\mathbb R}
\def\Q{\mathbb Q}
\def\Z{\mathbb Z}
\def\P{\mathbb P}
\def\1{\mathbbm 1}
\def\T{\mathcal{T}}
\newcommand{\interior}[1]{\raise0.2ex\hbox{$\displaystyle{\mathop{#1}^{\circ}}$}}
\renewcommand\phi{\varphi}
\renewcommand\emptyset{\varnothing}
\newtheorem{theorem}{Theorem}[section]
\newtheorem{proposition}[theorem]{Proposition}
\newtheorem{definition}[theorem]{Definition}
\newtheorem{corollary}[theorem]{Corollary}
\numberwithin{equation}{section}
\newcommand{\Ext}{\mathrm{Ext}}
\newcommand{\ind}{\mathrm{ind}}
\newcommand{\vir}{\mathrm{vir}}
\newcommand{\Hilb}{\mathrm{Hilb}}
\newcommand{\Sym}{\mathrm{Sym}}
\newcommand{\F}{\mathcal{F}}
\def\I{\mathcal{I}}
\newcommand{\K}{\mathcal{K}}
\newcommand{\Def}{\mathrm{Def}}
\newcommand{\Obs}{\mathrm{Obs}}
\newcommand{\Pic}{\mathrm{Pic}}
\newcommand{\diag}{\mathrm{diag}}
\def\V{\mathcal{V}}
\title{K-theoretic Donaldson-Thomas theory and the Hilbert scheme of points on a surface}
\author{Noah Arbesfeld}
\address{Imperial College London, Department of Mathematics, London, UK}
\email{n.arbesfeld@imperial.ac.uk}
\begin{document}

\begin{abstract}
Integrals of characteristic classes of tautological sheaves on the Hilbert scheme of points on a surface frequently arise in enumerative problems. We use the K-theoretic Donaldson-Thomas theory of certain toric Calabi-Yau threefolds to study K-theoretic variants of such expressions.

We study limits of the K-theoretic Donaldson-Thomas partition function of a toric Calabi-Yau threefold under certain one-parameter subgroups called slopes, and formulate a condition under which two such limits coincide. We then explicitly compute the limits of components of the partition function under so-called preferred slopes, obtaining explicit combinatorial expressions related to the refined topological vertex of Iqbal, Kos\c{c}az and Vafa.

Applying these results to specific Calabi-Yau threefolds, we deduce dualities satisfied by a generating function built from tautological bundles on the Hilbert scheme of points on $\C^2$. We then use this duality to study holomorphic Euler characteristics of exterior and symmetric powers of tautological bundles on the Hilbert scheme of points on a general surface.

\text{} \newline

\end{abstract}
\maketitle


\section{Introduction}\label{introduction}

\subsection{Tautological classes on the Hilbert scheme of points on a surface}
Let $S$ be a complex smooth quasi-projective surface and let $S^{[n]}$ denote the Hilbert scheme of $n$ points on $S$. The moduli space $S^{[n]}$ is a smooth quasi-projective scheme of dimension $2n$ parametrizing zero-dimensional length $n$ subschemes $Y\subset S$, or, equivalently, ideal sheaves $\mathcal{I}_{Y}\subset \mathcal{O}_S$ of colength $n$.

Given a line bundle $\mathcal{L}$ on $S$, one can use the universal family $\Sigma_n\subset S^{[n]}\times S$ as a correspondence to produce a rank $n$ vector bundle $\mathcal{L}^{[n]}$ on $S^{[n]}$ . Namely, if $p_{S^{[n]}}$ and $p_{S}$ are the projection maps
 $$
\xymatrix{
& \ar[ld]_{p_{S^{[n]}}} S^{[n]}\times S \ar[rd]^{p_S} & \\
S^{[n]} & & S
}
 $$
we set $$\mathcal{L}^{[n]}=(p_{S^{[n]}})_*(\mathcal{O}_{\Sigma_n}\otimes p_S^{*}\mathcal{L});$$ the vector bundle $\mathcal{L}^{[n]}$ is called a \emph{tautological bundle}, and its fiber over a subscheme $Y\in S^{[n]}$ is $$H^0(\mathcal{O}_Y\otimes \mathcal{L}).$$
Characteristic classes of tautological bundles often encode geometric information. For example, series of the form \begin{align}\label{cohint} \sum_{n\geq 0} m^iy^jz^n\int_{S^{[n]}} c_i(\mathcal{L}^{[n]})s_j(\mathcal{L}^{[n]})\end{align}
 arise in \cite{MOP1} in the study of the cohomology ring of the moduli space of $K3$ surfaces. The special case \begin{align} \label{cohseg}
\sum_{n\geq 0} z^n \int_{S^{[n]}}s_{2n}(\mathcal{L}^{[n]}) 
\end{align} 
 has been the subject of recent activity; its terms count secants to the projective embedding of $S$ given by $\mathcal{L}$ and are relevant to the computation of certain Donaldson invariants on $S$.  The precise form of this series was conjectured by Lehn in \cite{L} and was recently proved, first for $S$ with numerically trivial $\mathcal{K}_S$ by Marian, Oprea and Pandharipande in \cite{MOP1} and subsequently for general $S$ by Voisin,  and Marian, Oprea and Pandharipande in \cite{V,MOP2}. 
  

We study K-theoretic variants of series like (\ref{cohint}). Given a vector bundle $\mathcal{V}$ on a scheme $X$, set \begin{align}\label{lambsym}
\Lambda^{\bullet}_m \mathcal{V}=\sum_{i\geq 0}^{\infty} (-m)^i \Lambda^{i}\mathcal{V} \in K(X)[[m]],\ \ \Sym^{\bullet}_y\mathcal{V}=\sum_{i\geq 0}^{\infty} y^i\Sym^i\mathcal{V}\in K(X)[[y]].
\end{align}
 Then, a K-theoretic version of (\ref{cohint}) is 
 \begin{align} \label{keul} &\sum_{n\geq 0} z^n \chi\bigg (S^{[n]},\Lambda^{\bullet}_m \Big(\mathcal{L}^{[n]}\Big)\otimes \Sym^{\bullet}_y \Big(\mathcal{L}^{[n]}\Big)\bigg )\nonumber \\&= \sum_{i,j,n\geq 0} (-m)^iy^jz^n \chi\bigg (S^{[n]},\Lambda^{i} \Big(\mathcal{L}^{[n]}\Big)\otimes \Sym^{j} \Big(\mathcal{L}^{[n]}\Big)\bigg) .
 \end{align}

Given an Euler characteristic $\chi(\mathscr{M},\mathcal{F})$ of a torus equivariant coherent sheaf $\mathcal{F}$ on a quasi-projective scheme $\mathscr{M}$ (equipped with a torus action), we identify $\chi(\mathscr{M},\mathcal{F})$ with its character as a torus representation; in all examples we consider, such characters are rational functions on the torus. 
 
\subsubsection{Universal series} A general framework is developed in \cite{EGL} for analyzing expressions of a form similar to (\ref{cohint}) and  (\ref{keul}) using algebraic cobordism. To be precise, let $$\psi_1(x)\in 1+x\Q[y^1_j][[x]],\ \psi_2(x)\in \Q[y^2_j][[x]]$$ be fixed power series (with coefficients in some polynomial rings $\Q[y^1_j],\Q[y^2_j]$), and for a vector bundle $\mathcal{V}$ with Chern roots $v_i$, set $$\Psi_1(\mathcal{V})=\prod_i \psi_1(v_i),\ \ \Psi_2(\mathcal{V})=\prod_i \psi_2(v_i).$$


By \cite[Thm 4.2]{EGL}, the series \begin{align}\label{bigseries} \Psi(S,\mathcal{L}) =\sum_{n\geq 0} z^n \int_{S^{[n]}} \Psi_1(\mathcal{L}^{[n]})\Psi_2(\mathcal{T}S^{[n]})
\end{align}
can be written as $$\Psi(S,\mathcal{L})= \exp(c_1(\mathcal{L})^2 A_1+ c_1(\mathcal{L})c_2(S)A_2+ c_1(S)^2A_3+c_2(S)A_4),$$ where the $A_i$ are universal series; that is, they do not depend on the choice of $(S,\mathcal{L})$.

\subsubsection{Localization expressions} The series $A_i$ can be determined by the values of $\Psi(S,\mathcal{L})$ for toric $S$. One approach to studying such values is to use equivariant localization to further reduce an \emph{equivariant} analog of (\ref{bigseries}), for which $S$ is $\C^2,$ equipped with the action of a two-dimensional torus $T$, and $\mathcal{L}$ is a twist of the trivial bundle by a $T$-character. 


The scaling action of $T={\rm diag}(t_1,t_2)$ on $\C^2$ lifts to an action on $(\C^2)^{[n]}$. The $T$-fixed points of $(\C^2)^{[n]}$ correspond to monomial ideals $I_{\lambda}\subset \C[z_1,z_2]$ of colength $n$, indexed by (two-dimensional) partitions $\lambda$ of size $n$. Here $$I_{\lambda}=\mathrm{Span}\{ z_1^{b_1}z_2^{b_2}\ |\ (b_1,b_2)\not\in \lambda\};$$ our notation for partitions is set in Section \ref{part}.



It was shown in \cite[3.2]{ES} that as a $T$-module, the tangent space to $I_{\lambda}\in (\C^2)^{[n]}$ admits a combinatorial formula in terms of the arm lengths $a(\square)$ and leg lengths $l(\square)$ of boxes in the corresponding partition: \begin{align}\label{tanchar} T_{I_{\lambda}}(\C^2)^{[n]}=\sum_{\square\in\lambda} t_1^{-l(\square)}t_2^{a(\square)+1}+t_1^{l(\square)+1}t_2^{-a(\square)}.\end{align} The definition of arm and leg lengths are recalled in Section \ref{part}. We abbreviate the $T$-character $T_{I_{\lambda}}(\C^2)^{[n]}$ by $T_{\lambda}$.

The $T$-characters of fibers of tautological bundles over the $T$-fixed points of the Hilbert scheme are also described combinatorially. Let $\mathcal{O}(l)$ denote the trivial line bundle on $\C^2$ twisted by the $T$-weight $l$. We have $$(\mathcal{O}(l))^{[n]}|_{I_{\lambda}} = \sum_{\square=(b_1,b_2)\in\lambda} lt_1^{-b_1}t_2^{-b_2}.$$
By equivariant localization, expressions of the form (\ref{bigseries}) may be written in terms of the combinatorics of partitions. This approach has been used in similar settings to compute universal series up to some given order; see for example \cite{GK1, GK2}. However, some structures of such universal series may be intractable from this perspective; for example, it is not known how to use this approach to determine the series (\ref{cohseg}) studied in \cite{MOP1, V, MOP2}.

\subsubsection{Symmetries of generating functions} In this paper, we introduce a framework to study certain tautological classes over the Hilbert scheme of points on $\C^2$. Introduce formal variables $z, y, m_1, m_2$ and $m_3$, and consider the following expression assembled from exterior and symmetric powers of tautological bundles over the Hilbert schemes $(\C^2)^{[n]}$. 
\begin{definition}
Set $F(z,m_1,m_2,m_3,y)(t_1,t_2)$ to be the series of $T$-equivariant Euler characteristics \begin{align}\label{Fdef}\sum_{n\geq 0} (-z)^{n}\chi\bigg ((\C^2)^{[n]},\Lambda^{\bullet}_{m_1}(\mathcal{O}^{[n]})^{\vee} \otimes \Lambda^{\bullet}_{m_2}(\mathcal{O}^{[n]})^{\vee} \otimes \Lambda^{\bullet}_{m_3y}\mathcal{O}^{[n]} \otimes \Sym^{\bullet}_y\mathcal{O}^{[n]}\otimes \Lambda^n\mathcal{O}^{[n]}\bigg).
\end{align}
\end{definition}


By K-theoretic equivariant localization (\cite[Thm 3.5]{T}), \begin{align*} F&(z,m_1,m_2,m_3,y)(t_1,t_2)\\&=\sum_{\lambda} \frac{z^{|\lambda|}}{\Lambda^{\bullet}(T_{\lambda}^{\vee})}\prod_{(b_1,b_2)\in\lambda} \frac{(1-m_1t_1^{b_1}t_2^{b_2})(1-m_2t_1^{b_1}t_2^{b_2})(1-m_3yt_1^{-b_1}t_2^{-b_2})}{y-t_1^{b_1}t_2^{b_2}};\end{align*}
here, given a $T$-character $V=\sum_i u_i,$ where each $u_i$ is a $T$-weight, we set $$\Lambda^{\bullet} V=\prod_i (1-u_i).$$ We regard $F$ as a power series in the variables $y$ and $z$ whose coefficients are rational functions in the remaining variables $m_1, m_2, m_3, t_1$ and $t_2$.

We show that after normalization, the series $F$ enjoys two types of symmetries: one between the ``box-counting variable'' $z$ and the ``Segre variable'' $y$, and another between the ``Chern variables'' $m_1, m_2, m_3$.

\begin{theorem}\label{symmetry}
We have \begin{align}\label{swap} \frac{F(z,m_1,m_2,m_3,y)(t_1,t_2)}{F(z,m_1,m_2,m_3,0)(t_1,t_2)}=\frac{F(z,m_1,m_3,m_2,y)(t_1,t_2)}{F(z,m_1,m_3,m_2,0)(t_1,t_2)}=\frac{F(y,m_1,m_2,m_3,z)(t_1,t_2)}{F(y,m_1,m_2,m_3,0)(t_1,t_2)}.\end{align}
\end{theorem}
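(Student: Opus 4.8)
The plan is to recognize the normalized series in (\ref{swap}) as limits, along one-parameter subgroups, of the K-theoretic Donaldson--Thomas partition function of a single toric Calabi--Yau threefold $X$, and to read off both symmetries from the geometry of $X$ together with the criterion (established earlier in the paper) under which two such slope-limits coincide. Concretely, $X$ should be a toric CY3 whose web diagram is a finite chain of rational curves, with two of the K\"ahler parameters specializing to $z$ and $y$ and the remaining equivariant/framing weights to $m_1,m_2,m_3$. Under the relevant preferred slope the three-dimensional box configurations at each vertex of the web collapse to two-dimensional partitions, and the resulting vertex and edge factors should reproduce exactly the displayed localization formula for $F$: the denominator $\Lambda^{\bullet}(T_\lambda^\vee)$ is the moving part of the virtual tangent space at the fixed locus, the content-indexed products $\prod_{(b_1,b_2)\in\lambda}(1-m_it_1^{b_1}t_2^{b_2})$ and $\prod_{(b_1,b_2)\in\lambda}(y-t_1^{b_1}t_2^{b_2})^{-1}$ are the normal-bundle and insertion factors, and $\Lambda^n\mathcal{O}^{[n]}=\det\mathcal{O}^{[n]}$ contributes an overall twist. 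Pinning down $X$ and carrying out this matching, while tracking all signs and prefactors in the explicit preferred-slope limit, is the longest but most routine step.

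Granting this, the two symmetries reflect two features of $X$. First, $F(z,m_1,m_2,m_3,0)$ is the specialization that switches off the K\"ahler parameter ``$y$'' (and hence $m_3$) --- the part of $F$ of degree zero in the corresponding curve class --- so the left-hand ratio in (\ref{swap}) is a reduced DT series. The web of $X$ should carry a reflection interchanging the legs that bear the weights $m_2$ and $m_3$ while fixing the rest; this reflection is an isomorphism (or an iterated flop) of $X$ and hence induces an equality of the reduced series under $m_2\leftrightarrow m_3$, the first equality of (\ref{swap}) (the symmetry under $m_1\leftrightarrow m_2$ being already manifest from the localization formula). Second, the same chain $X$ may be degenerated by a slope approaching either end of the web, giving a priori two different limits of $Z_X$; the substitution $z\leftrightarrow y$, together with the relabeling of the $m_i$ that the geometry forces, identifies these two preferred slopes, so the coincidence criterion applies and forces the two normalized limits to agree.

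The second equality is the main obstacle. Unlike $m_2\leftrightarrow m_3$, the symmetry $z\leftrightarrow y$ is invisible term-by-term in the sum over $\lambda$: the variable $z$ controls the grading through $z^{|\lambda|}$, whereas $y$ enters only through the content products, so it cannot be checked partition-by-partition and genuinely requires resumming the series. One must therefore either verify carefully the hypotheses of the slope-limit coincidence criterion for the specific $X$ at hand, or else bypass the geometry entirely and prove directly that the localization sum for $F$ admits a closed product expansion --- in the spirit of Nekrasov-type evaluations of rank-one partition functions with matter, obtained by letting a Carlsson--Okounkov-type vertex operator act on Fock space --- in which, after normalization, $z$ and $y$ occur symmetrically. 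Either route reduces the theorem to a single substantial input: the geometric coincidence statement, or an explicit trigonometric hypergeometric identity.
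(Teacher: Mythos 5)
Your overall strategy is the paper's: realize the normalized series as preferred-slope limits of $Z_{DT}$ of a toric Calabi--Yau threefold, invoke the slope-coincidence criterion (Theorem \ref{independence}) for two preferred slopes with the same attracting/repelling data, and evaluate both limits via the refined vertex, with the denominators $F(\cdot,\dots,0)$ supplied by a plethystic evaluation (Proposition \ref{denominator}). You also correctly isolate the $z\leftrightarrow y$ equality as the genuinely non-termwise part. But two of the steps you declare routine hide the actual content, and one of them as stated would fail.

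First, the geometry cannot be ``a finite chain of rational curves'' with three mass legs. A chain (strip) engineers a rank-one theory and its vertex sum will not produce the factor $\prod_{\square\in\lambda}(y-t_1^{b_1}t_2^{b_2})^{-1}$, which is a vector-multiplet--type denominator. The paper's $X_1$ is a closed loop of four compact curves (two homologous pairs, with K\"ahler parameters $u,u,v,v$) decorated by \emph{four} mass legs $m_1,\dots,m_4$; its preferred limits are rank-two Nekrasov sums over pairs of partitions. The series $F$, with its five parameters, only emerges after the specialization $m_4=-\sqrt{q/t}$, which forces one of the two partitions to be empty and collapses the rank-two sum to a Hilbert-scheme sum in which the surviving $a_2/a_1$-dependence becomes the Segre variable. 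Without this fourth mass and its specialization the matching with (\ref{Fdef}) does not come out, so this is a missing idea rather than bookkeeping. Second, your proposed derivation of the first equality of (\ref{swap}) (the $m_2\leftrightarrow m_3$ swap) from a reflection of the web is not available in the form you describe: after the $m_4$-specialization the three remaining masses sit asymmetrically in the diagram, and no automorphism of $X_1$ fixes $m_1$ while exchanging $m_2$ and $m_3$. The paper instead obtains a single identity from the two slopes, namely $F(v,m_2,m_3,m_1,u)/F(v,m_2,m_3,m_1,0)=F(u,m_1,m_2,m_3,v)/F(u,m_1,m_2,m_3,0)$ --- a $z\leftrightarrow y$ swap \emph{coupled to} a cyclic permutation of the masses --- and then derives both equalities of (\ref{swap}) by composing this with the manifest $m_1\leftrightarrow m_2$ symmetry of the definition of $F$. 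So the ``easy'' symmetry and the ``hard'' symmetry are not independent inputs; both are consequences of one geometric identity, and your plan needs to be reorganized accordingly (or you must supply a genuinely different argument for $m_2\leftrightarrow m_3$). The remaining ingredients you list --- verifying $A$-balancedness and the attracting/repelling hypotheses for the two slopes, and the skew-Schur resummation --- do go through as you expect.
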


Moreover, we show that the denominators appearing in (\ref{swap}) can be characterized using the plethystic exponential.

\begin{proposition}\label{denominator}
We have \begin{align}\label{denom} F(z,m_1,m_2,m_3,0)(t_1,t_2)=\exp\bigg(\sum_{n\geq 1} -\frac{z^n}{n} \frac{(1-m_1^{n})(1-m_2^{n})}{(1-t_1^{-n})(1-t_2^{-n})}\bigg).\end{align}
\end{proposition}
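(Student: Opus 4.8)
\emph{Reduction.} The plan is to reduce (\ref{denom}) to a closed product evaluation and then establish that evaluation. Setting $y=0$ in the K-theoretic localization formula for $F$, each factor $1-m_3yt_1^{-b_1}t_2^{-b_2}$ becomes $1$ and each denominator $y-t_1^{b_1}t_2^{b_2}$ becomes $-t_1^{b_1}t_2^{b_2}$, so the dependence on $m_3$ drops out and
\begin{equation*}
F(z,m_1,m_2,m_3,0)(t_1,t_2)=\sum_{\lambda}\frac{(-z)^{|\lambda|}}{\Lambda^{\bullet}(T_{\lambda}^{\vee})}\prod_{(b_1,b_2)\in\lambda}\frac{(1-m_1t_1^{b_1}t_2^{b_2})(1-m_2t_1^{b_1}t_2^{b_2})}{t_1^{b_1}t_2^{b_2}}.
\end{equation*}
Equivalently, since $\Lambda^{n}\mathcal V\otimes\Lambda^{i}\mathcal V^{\vee}=\Lambda^{n-i}\mathcal V$ for $\mathcal V$ of rank $n$ --- so $\Lambda^{n}\mathcal{O}^{[n]}\otimes\Lambda^{\bullet}_{m_1}(\mathcal{O}^{[n]})^{\vee}=(-m_1)^{n}\Lambda^{\bullet}_{1/m_1}\mathcal{O}^{[n]}$ --- the $y=0$ specialization of (\ref{Fdef}) equals $\sum_{n\ge0}(zm_1)^{n}\chi\big((\C^2)^{[n]},\Lambda^{\bullet}_{1/m_1}\mathcal{O}^{[n]}\otimes\Lambda^{\bullet}_{m_2}(\mathcal{O}^{[n]})^{\vee}\big)$. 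On the other side, expanding $1/\big((1-t_1^{-n})(1-t_2^{-n})\big)=\sum_{i,j\ge0}t_1^{-ni}t_2^{-nj}$ identifies the right-hand side of (\ref{denom}) with the infinite product
\begin{equation*}
\prod_{i,j\ge0}\frac{(1-zt_1^{-i}t_2^{-j})(1-zm_1m_2t_1^{-i}t_2^{-j})}{(1-zm_1t_1^{-i}t_2^{-j})(1-zm_2t_1^{-i}t_2^{-j})}.
\end{equation*}
Thus the proposition is equivalent to the evaluation of this $T$-equivariant generating function as the displayed product; this evaluation is the real content, and the reductions just made are formal.

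\emph{The product formula.} I expect this to follow by recognizing the generating function as a K-theoretic Donaldson--Thomas (equivalently five-dimensional Nekrasov) partition function with a known evaluation: the insertion $\Lambda^{\bullet}_{1/m_1}\mathcal{O}^{[n]}\otimes\Lambda^{\bullet}_{m_2}(\mathcal{O}^{[n]})^{\vee}$ contributes one ``fundamental'' and one ``antifundamental'' matter factor built from the tautological bundle and its dual, and, crucially, no adjoint factor $\mathcal{O}^{[n]}\otimes(\mathcal{O}^{[n]})^{\vee}$; hence the series is the rank-one instanton partition function with two matter multiplets, i.e.\ the refined partition function of a non-compact toric Calabi--Yau threefold whose web has no internal face, and the displayed product is precisely its refined-topological-vertex evaluation, the natural refinement of the resolved-conifold formula. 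Alternatively, the identity admits a purely combinatorial proof through Macdonald polynomials: factoring out a monomial gives $\Lambda^{\bullet}(T_{\lambda}^{\vee})=(t_1t_2)^{-|\lambda|}\prod_{\square}(1-t_1^{-l(\square)}t_2^{a(\square)+1})(1-t_1^{l(\square)+1}t_2^{-a(\square)})$, so $1/\Lambda^{\bullet}(T_{\lambda}^{\vee})$ is, up to a monomial, the reciprocal of the Macdonald hook-norm $c_{\lambda}c'_{\lambda}$ in parameters $(q,t)=(t_2,t_1^{-1})$, while each $\prod_{\square}(1-m_it_1^{b_1}t_2^{b_2})$ is a principal specialization of the $q$-Pochhammer of $\lambda$; the sum then becomes a specialization of the $P_{\lambda}$--$Q_{\lambda}$ Cauchy identity (a $q$-binomial theorem for Macdonald polynomials) that telescopes to the product.

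\emph{Main obstacle.} The hard part is exactly this product evaluation. I would emphasize one pitfall: it cannot be reached by the tempting route of pushing the tautological insertions forward along the Hilbert--Chow morphism $(\C^2)^{[n]}\to\Sym^{n}\C^2$ and reducing to symmetric powers on $\C^2$, because exterior powers do not commute with $R\pi_{*}$; this failure is visible in the answer, since the right-hand side of (\ref{denom}) is genuinely plethystic in $m_1,m_2$ (it involves $(1-m_1^{n})(1-m_2^{n})$, not $\big((1-m_1)(1-m_2)\big)^{n}$) and so is \emph{not} the naive symmetric-product series $\prod_{i,j\ge0}\big(1-z(1-m_1)(1-m_2)t_1^{-i}t_2^{-j}\big)$. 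In the combinatorial approach the genuine work is the bookkeeping matching the arm/leg normalization of $\Lambda^{\bullet}(T_{\lambda}^{\vee})$ to the content normalization of the box products while tracking the monomial prefactors $\prod_{\square}(t_1^{b_1}t_2^{b_2})^{-1}$ and the signs $(-1)^{|\lambda|}$; in the Donaldson--Thomas approach it is packaged into the refined-vertex evaluation of the relevant threefold, which also arises in the explicit slope-limit computations carried out later in the paper.
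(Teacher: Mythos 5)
Your formal reductions are all correct: the $y=0$ specialization does kill the $m_3$-dependence, the identity $\Lambda^{n}\mathcal V\otimes\Lambda^{\bullet}_{m_1}\mathcal V^{\vee}=(-m_1)^{n}\Lambda^{\bullet}_{1/m_1}\mathcal V$ is right, and your rewriting of the right-hand side of (\ref{denom}) as the infinite product $\prod_{i,j\ge0}(1-zt_1^{-i}t_2^{-j})(1-zm_1m_2t_1^{-i}t_2^{-j})/\big((1-zm_1t_1^{-i}t_2^{-j})(1-zm_2t_1^{-i}t_2^{-j})\big)$ is exactly what the plethystic exponential unpacks to. You have also correctly diagnosed where the content lives and correctly named the two viable routes: the identity is the rank-one Nekrasov function with one fundamental and one antifundamental multiplet, i.e.\ the refined-vertex partition function of the geometry the paper calls $X_2$, and it alternatively follows from Macdonald-polynomial combinatorics (the paper itself notes it is a consequence of Wang--Zhou, and credits Mellit with the Macdonald observation).

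The gap is that you never prove the product evaluation: the paragraph beginning ``I expect this to follow by recognizing\dots'' is a statement of strategy, not an argument, and that evaluation \emph{is} the proposition. In the paper the work is done by applying Corollary \ref{eqlim} to $X_2$: two preferred slopes $r_3\gg r_2>0\gg r_1$ and $s_2\gg 0>s_1\gg s_3$ give, via Propositions \ref{edge1}, \ref{edge2} and \ref{indref} together with the skew-Schur summation identities (\ref{schur1})--(\ref{schur2}), the closed product (\ref{fact1}) on one side and the sum over a single partition (\ref{denom4}) on the other; Theorem \ref{independence} forces these to be equal, and unwinding the substitution $m_i=-m_i'\sqrt{t/q}$, $u=u'q/t$ together with the monomial-shuffling identity (\ref{swap}) yields (\ref{denom}). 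None of this is automatic: the edge prefactors $t^{||\lambda||^2}$, the framing monomials, and the sign/monomial bookkeeping you flag as ``the genuine work'' must actually be carried out for the two sides to match, and your proposal leaves all of it undone. If you prefer the combinatorial route, you would instead need to exhibit the sum over $\lambda$ as a principal specialization of a Macdonald Cauchy-type identity and verify the hook-norm and content normalizations explicitly; either way, a complete proof requires executing one of these computations rather than asserting that one of them should work.
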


Note that the $n=1$ term of the sum in the argument on the right-hand side of (\ref{denom}) is the $z^1$-term of the left-hand side. We remark that Proposition \ref{denominator} is also a consequence of \cite[Thm 3.2]{WZ}. Anton Mellit has observed that certain specializations of Theorem \ref{symmetry} can also be deduced from the combinatorics of Macdonald polynomials; see \cite[Sec. 7]{M}, where such expressions are studied in connection with mixed Hodge polynomials of character varieties.

\subsubsection{Applications to tautological bundles} Theorem \ref{symmetry} and Proposition \ref{denominator} can be used to study the holomorphic Euler characteristics of exterior powers, and certain symmetric powers, of tautological bundles $\mathcal{L}^{[n]}$ on $S^{[n]}$ for general surfaces $S$. For example, we prove the following.

\begin{corollary}
If $\chi(\mathcal{O}_S)=1$, then for $n\geq k$, we have $$\chi(S^{[n]},\Sym^{k}(\mathcal{L}^{[n]}))=\binom{\chi(\mathcal{L})+k-1}{k}.$$
\end{corollary}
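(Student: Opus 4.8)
The plan is to recast the statement as the vanishing, in a range, of the difference between two generating series. Set
\[
\Phi_{S,\mathcal{L}}(z,y):=\sum_{n\ge 0}z^{n}\,\chi\!\left(S^{[n]},\Sym^{\bullet}_{y}(\mathcal{L}^{[n]})\right)=\sum_{n,k\ge 0}z^{n}y^{k}\,\chi\!\left(S^{[n]},\Sym^{k}(\mathcal{L}^{[n]})\right).
\]
Since $\binom{r+k-1}{k}=[y^{k}](1-y)^{-r}$, the Corollary says exactly that, when $\chi(\mathcal{O}_{S})=1$, the series $\Phi_{S,\mathcal{L}}$ and $\frac{1}{1-z}(1-y)^{-\chi(\mathcal{L})}$ have the same coefficient of $z^{n}y^{k}$ whenever $n\ge k$. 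By Hirzebruch--Riemann--Roch, $\chi(S^{[n]},\Sym^{k}\mathcal{L}^{[n]})=\int_{S^{[n]}}\mathrm{ch}(\Sym^{k}\mathcal{L}^{[n]})\,\mathrm{td}(\mathcal{T}S^{[n]})$ is of the type treated in \cite[Thm 4.2]{EGL}, so $\Phi_{S,\mathcal{L}}(z,y)=\exp\!\bigl(c_{1}(\mathcal{L})^{2}A_{1}+c_{1}(\mathcal{L})c_{1}(S)A_{2}+c_{1}(S)^{2}A_{3}+c_{2}(S)A_{4}\bigr)$ for universal power series $A_{i}\in\Q[[z,y]]$, and everything comes down to understanding the $A_{i}$, which by the usual reduction may be extracted from toric surfaces, ultimately from the equivariant geometry of $(\C^{2})^{[n]}$.

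This is where Theorem~\ref{symmetry} and Proposition~\ref{denominator} enter, and it is the crux. Setting $m_{1}=m_{2}=m_{3}=0$ in \eqref{Fdef} kills every $\Lambda^{\bullet}$--factor, so $F(z,0,0,0,y)(t_{1},t_{2})$ is the equivariant $(\C^{2})$--avatar of the box-counting series for $\Sym^{\bullet}_{y}$ of the tautological bundle, twisted by $\Lambda^{n}\mathcal{O}^{[n]}=\det\mathcal{O}^{[n]}$ and by the sign $(-z)^{n}$; one first reconciles this twist with the untwisted series $\Phi_{S,\mathcal{L}}$ (equivalently, works throughout with the $\det$--twisted generating function, whose universal-series form is again of EGL type). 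Proposition~\ref{denominator} gives $F(z,0,0,0,0)(t_{1},t_{2})=\prod_{a,b\ge 0}\bigl(1-z\,t_{1}^{-a}t_{2}^{-b}\bigr)$ by expanding the plethystic exponential, and Theorem~\ref{symmetry} gives $F(z,0,0,0,y)=F(z,0,0,0,0)\cdot F(y,0,0,0,z)/F(y,0,0,0,0)$. Feeding these, \eqref{tanchar}, and the twist $\mathcal{O}\rightsquigarrow\mathcal{O}(l)$ through the standard Ellingsrud--Göttsche--Lehn extraction, the structural fact to isolate is that, on the coefficients governing the range $n\ge k$, one has $A_{1}=A_{2}$ and $A_{3}=A_{4}$ --- the trace on $\Phi_{S,\mathcal{L}}$ of the $z\leftrightarrow y$ symmetry. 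Hence on those coefficients $\Phi_{S,\mathcal{L}}\equiv\exp\!\bigl((c_{1}(\mathcal{L})^{2}+c_{1}(\mathcal{L})c_{1}(S))A_{1}+(c_{1}(S)^{2}+c_{2}(S))A_{3}\bigr)$, which by Riemann--Roch and Noether's formula depends on $(S,\mathcal{L})$ only through $\chi(\mathcal{L})$ and $\chi(\mathcal{O}_{S})$.

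To conclude, impose $\chi(\mathcal{O}_{S})=1$: then $c_{1}(S)^{2}+c_{2}(S)=12$ and $c_{1}(\mathcal{L})^{2}+c_{1}(\mathcal{L})c_{1}(S)=2\chi(\mathcal{L})-2$, so for each pair $(n,k)$ with $n\ge k$ the number $\chi(S^{[n]},\Sym^{k}\mathcal{L}^{[n]})$ becomes a fixed polynomial in $\chi(\mathcal{L})$. Evaluate that polynomial at infinitely many arguments by taking $\mathcal{L}$ very ample with $H^{1}(\mathcal{L})=H^{2}(\mathcal{L})=0$ on, say, $\P^{2}$: there $H^{>0}(S^{[n]},\Sym^{k}\mathcal{L}^{[n]})$ vanishes and $H^{0}(S^{[n]},\Sym^{k}\mathcal{L}^{[n]})=\Sym^{k}H^{0}(S,\mathcal{L})$ for $n\ge k$, so the value is $\binom{h^{0}(\mathcal{L})+k-1}{k}=\binom{\chi(\mathcal{L})+k-1}{k}$; since this holds for infinitely many values of $\chi(\mathcal{L})$, the polynomial equals $\binom{\chi(\mathcal{L})+k-1}{k}$ identically, proving the Corollary. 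The main obstacle is the second step: correctly matching the combinatorics of $F$ --- its $\det$--twist, its sign, and the fact that there the tautological bundle is only a character twist of $\mathcal{O}_{\C^{2}}$ --- to the untwisted general-surface series $\Phi_{S,\mathcal{L}}$, carrying out the Ellingsrud--Göttsche--Lehn extraction, and, most delicately, determining exactly which generating-function coefficients lie outside the stable range $n\ge k$, so that the symmetry-forced identities $A_{1}=A_{2}$, $A_{3}=A_{4}$ --- and with them the reduction to the two invariants $\chi(\mathcal{L})$ and $\chi(\mathcal{O}_{S})$ --- are applicable there.
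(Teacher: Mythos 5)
Your overall scaffolding (EGL reduction to universal series, localization to equivariant $\C^2$, input from Theorem~\ref{symmetry} and Proposition~\ref{denominator}) matches the paper's, but the step that is supposed to do the work is false. You claim that the $z\leftrightarrow y$ symmetry forces $A_1=A_2$ and $A_3=A_4$ ``on the coefficients governing the range $n\ge k$,'' so that for \emph{every} surface the coefficients of $z^ny^k$ with $n\ge k$ depend on $(S,\mathcal{L})$ only through $\chi(\mathcal{L})$ and $\chi(\mathcal{O}_S)$, and only afterwards do you impose $\chi(\mathcal{O}_S)=1$. That universal statement is contradicted by the paper's own formula \eqref{Sym2}: for $\chi(\mathcal{O}_S)=2$ and $n=k=2$ it reads $\chi(S^{[2]},\Sym^2\mathcal{L}^{[2]})=\chi(\mathcal{L}^{\otimes 2})+\binom{\chi(\mathcal{L})+1}{2}$, and $\chi(\mathcal{L}^{\otimes 2})=\chi(\mathcal{O}_S)+2c_1(\mathcal{L})^2+c_1(\mathcal{L})c_1(S)$ is \emph{not} a function of $\chi(\mathcal{L})$ and $\chi(\mathcal{O}_S)$ (one can exhibit $\Z$-combinations of the cobordism generators \eqref{cobordismgenerators} with the same $\chi(\mathcal{L}),\chi(\mathcal{O}_S)$ but different $c_1(\mathcal{L})^2$, and \eqref{redtoric} then forces different values). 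So no identity among the universal series of the shape you assert can hold, and your reduction of the $\chi(\mathcal{O}_S)=1$ case to ``a fixed polynomial in $\chi(\mathcal{L})$'' collapses. The actual mechanism is different and genuinely uses $\chi(\mathcal{O}_S)=1$: the symmetry yields $\chi_{\Sym}(S,\mathcal{L})=(1-z)^{-\chi(\mathcal{O}_S)}(1-y)^{-\chi(\mathcal{L})}G(S,\mathcal{L})$ as in \eqref{simplesym}, where the correction $G$ is \emph{not} a function of $(\chi(\mathcal{L}),\chi(\mathcal{O}_S))$; what saves the day is that (a) every monomial of $G$ has $z$-degree at most its $y$-degree, and (b) $G|_{z=1}=1$ in each bounded $y$-degree, because every nonempty partition in \eqref{Gseries} contains the box $(0,0)$ and hence contributes a factor $(1-z)$. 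Combined with the single factor $(1-z)^{-1}$ coming from $\chi(\mathcal{O}_S)=1$, this is exactly what makes the $z^ny^k$-coefficient stabilize for $n\ge k$ to the $y^k$-coefficient of $(1-y)^{-\chi(\mathcal{L})}$. Your argument never isolates this cancellation.

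Two further points. First, the closing interpolation step rests on the unproved assertions that $H^{>0}(S^{[n]},\Sym^k\mathcal{L}^{[n]})=0$ and $H^0(S^{[n]},\Sym^k\mathcal{L}^{[n]})=\Sym^kH^0(S,\mathcal{L})$ for $n\ge k$ and $\mathcal{L}$ sufficiently positive on $\P^2$; this is essentially the content of Danila's and Scala's theorems, a substantial external input that would largely supersede the present approach, so it cannot simply be asserted. Second, the passage from $F$ (with its $(-z)^n$ sign and $\Lambda^n\mathcal{O}^{[n]}$ twist) to $\chi_{\Sym}$ is not a ``reconciliation of a determinant twist'' of the generating function but a diagonal extraction: one takes the terms of $F(z,m,0,0,yl)$ of equal $z$- and $m$-degree and then sets $m=1$, which converts $\prod(1-mt_1^{b_1}t_2^{b_2})$ into $\prod(-t_1^{b_1}t_2^{b_2})$ and cancels the twist. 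This step is needed to make Theorem~\ref{symmetry} applicable at all, and glossing over it hides where the variable $y$ actually enters the denominator in a controlled way.
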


Theorem \ref{symmetry} and Proposition \ref{denominator} controls holomorphic Euler characteristics of exterior powers of tautological bundles associated to rank two vector bundles on $S$.

Theorem \ref{symmetry} also encodes families of nontrivial combinatorial identities. For example, it is obvious that, for $l>n$, the coefficient of $z^nm_1^l$ is $0$ on the left-hand side of (\ref{swap}); this is not obvious on the right-hand side.


\subsection{K-theoretic enumerative geometry}

We deduce Theorem \ref{symmetry} and Proposition \ref{denominator} from equalities of particular limits of K-theoretic Donaldson-Thomas partition functions of certain Calabi-Yau threefolds. 

These results follow from a more general framework that produces identities involving generating series called K-theoretic Nekrasov partition functions; these are series formed from Euler characteristics of tautological bundles over instanton moduli spaces. Many invariants of geometric interest can be phrased in terms of K-theoretic Nekrasov functions and their cohomological analogs; examples include the K-theoretic Vafa-Witten invariants studied in \cite{Th2} and the elliptic genera of moduli spaces of sheaves studied in \cite{GK2}.

Our approach to studying such functions is based on two observations. First, it is shown in \cite[Sec 8]{NO} that special limits of the K-theoretic DT partition function can be expressed in terms of the refined topological vertex of \cite{IKV}. Second, it is shown in the physics literature on ``geometric engineering'' that particular computations using this refined topological vertex can be matched with the K-theoretic Nekrasov partition functions of certain gauge theories; some examples of this phenomenon are computed in \cite{IKV, Ta}. 

Let us outline the approach in more detail.

Let $X$ be a complex smooth threefold and let $DT(X)$ denote the Hilbert scheme of curves in $X$. In \cite{NO}, Nekrasov and Okounkov introduce the K-theoretic Donaldson-Thomas partition function $Z_{DT}(X)$, a generating series formed from Euler characteristics $$\chi(DT(X,\beta,n),\tilde{\mathcal{O}}^{\vir}).$$ Here $DT(X,\beta,n)$ is a component of the Hilbert scheme of curves equipped with the symmetric perfect obstruction theory arising from its realization as a moduli space of ideal sheaves and $\tilde{\mathcal{O}}^{\vir}$ is a certain modification of the virtual structure sheaf. The precise definition will be recalled in Section \ref{prelim}.

Our applications will be in the case when $X$ is toric Calabi-Yau and the partition function $Z_{DT}(X)$ is formed from torus-equivariant Euler characteristics.

\subsubsection{Slope independence} Given a complex torus $\mathbb{T}$, a power series $Z$ with coefficients in $\Q(\mathbb{T})$ and a one-parameter subgroup $\sigma:\C^{\times}\to \mathbb{T}$, define $Z^{\sigma}$ to be the power series (again with coefficients in $\Q(\mathbb{T})$) given by \begin{align}\label{genericlimit} Z^{\sigma}(\bar{t})=\lim_{z\to 0} Z({\sigma(z)\overline{t}}),\end{align} for $\overline{t}\in \mathbb{T}$, if the limit in question exists. 

Given such a one-parameter subgroup $\sigma$ and a weight $w:\mathbb{T}\to \C^{\times}$, there exists some integer $r$ such that $w(\sigma(z)\overline{t})=z^rw(\overline{t})$ for all $z\in \C^{\times}, \overline{t}\in \mathbb{T}$. By restricting our attention to generic $\sigma$ (whose precise meaning for our purposes is given in Section \ref{secindependence}), we may assume that this $r$ is nonzero; we then say that $w$ is \emph{attracting} with respect to $\sigma$ if $r>0$, and \emph{repelling} with respect to $\sigma$ if $r<0$. 

Limits of the form (\ref{genericlimit}) are easiest to study when the terms of the power series are given by Euler characteristics over proper spaces. For example, it is shown in \cite[Prop 7.4]{NO} that, if $M_0$ is a compact moduli space equipped with a $\mathbb{T}$-equivariant symmetric perfect obstruction theory such that (1) the associated virtual canonical bundle $\mathcal{K}^{\vir}$ admits a (non-equivariant) square root and (2) the deformation and obstruction spaces are dual up to some $\mathbb{T}$-character $\kappa_0$, then for generic one-parameter subgroups $\sigma:\C^{\times}\to \ker(\kappa_0)$, the limit $$\chi(M_0,\tilde{\mathcal{O}}^{\vir})^{\sigma}$$ exists and is independent of the choice of $\sigma$. See the beginning of Section \ref{secindependence} for an example.

However, we are interested in such limits when the power series in question are Donaldson-Thomas partition functions. As the Hilbert scheme of curves in a non-compact threefold has non-compact components, our first step is to formulate a version of \cite[Prop 7.4]{NO} that holds for non-compact geometries. 

In particular, let $M$ be a component $DT(X,\beta,n)$ of the Hilbert scheme of curves in a smooth toric Calabi-Yau threefold $X$. Let $T$ now denote the three-dimensional torus acting on $X$, and let $\kappa$ be the $T$-weight of the anti-canonical bundle $\mathcal{K}^{\vee}_X$.  

A {\em slope} is a one-parameter subgroup $\sigma:\C^{\times}\to \ker(\kappa)\subset T$. Call a weight $w\in T^{\vee}$ a \emph{non-compact} direction of $X$ if the fixed locus $(DT(X))^{\ker(w)}$ has a non-proper component. We prove the following.

\begin{theorem}\label{independence}
If two generic slopes $\sigma_1$ and $\sigma_2$ share the same attracting/repelling behavior for each non-compact direction of $X$, then $$\chi(M,\tilde{\mathcal{O}}^{\vir})^{\sigma_1}=\chi(M,\tilde{\mathcal{O}}^{\vir})^{\sigma_2}\in \Q[\kappa^{1/2}].$$
\end{theorem}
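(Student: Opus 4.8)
The plan is to reduce Theorem~\ref{independence} to the compact case \cite[Prop 7.4]{NO} by a standard localization argument on the non-compact directions, in the spirit of the ``virtual localization for quasi-maps/instanton moduli'' arguments. First I would fix a one-parameter subgroup $\tau:\C^\times\to T$ that acts on $X$ with \emph{proper} fixed locus $X^\tau$ — such a $\tau$ exists because $X$ is toric Calabi-Yau, and its image together with $\ker(\kappa)$ spans $T$ up to finite index. The key point is that the non-compactness of $M=DT(X,\beta,n)$ is captured entirely by the non-compact directions $w_1,\dots,w_k\in T^\vee$: by applying virtual localization with respect to the subtorus $\ker(\tau')$ for a suitable generic $\tau'$, one writes $\chi(M,\tilde{\mathcal O}^{\vir})$ as a sum over components $F$ of a fixed locus, each of which is either proper or splits off a ``non-compact factor'' isomorphic to an affine space (a product of weight spaces for the $w_i$) times a proper base. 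The limit $(\,\cdot\,)^\sigma$ along a slope $\sigma$ converts each such contribution into a \emph{finite} expression: on the non-proper affine factors, taking $z\to 0$ in $\chi(\mathbb A^d,\tilde{\mathcal O}^{\vir})(\sigma(z)\bar t)$ picks out either $0$, $1$, or a monomial depending on whether the relevant $w_i$ is attracting or repelling for $\sigma$ — and this is exactly the data Theorem~\ref{independence} assumes $\sigma_1,\sigma_2$ share.

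Concretely, the key steps in order:
\begin{enumerate}
\item Choose a cocharacter $\tau$ with $X^\tau$ proper; realize $M$ as cut out inside a space with a $T$-action whose $\ker(\tau)$-fixed locus has proper components after removing the directions $w_i$.
\item Apply $K$-theoretic virtual localization (as in \cite{T}) for a generic one-parameter subgroup transverse to the slope directions to express $\chi(M,\tilde{\mathcal O}^{\vir})$ as a sum of terms, each of the form $\chi(F,(\,\cdot\,))\cdot(\text{rational function in the }w_i\text{ and }\tau\text{-weights})$ with $F$ proper and equipped with an induced symmetric perfect obstruction theory satisfying hypotheses (1)--(2) of \cite[Prop 7.4]{NO}, with $\kappa_0$ the restriction of $\kappa$.
\item Take the slope limit term by term. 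On the proper factor $F$, invoke \cite[Prop 7.4]{NO}: the limit exists, lies in $\Q[\kappa^{1/2}]$, and is independent of the slope. On the non-compact factors, observe directly that $\lim_{z\to 0}$ of the $w_i$-dependent rational function depends only on the sign of $r_i$ (whether $w_i$ is attracting or repelling for $\sigma$), since that fixes whether each factor $(1-w_i\sigma(z)\cdots)^{\pm 1}$ tends to $1$, $0$, or $\infty$ — and the balancing forced by the symmetric obstruction theory ($\mathcal K^{\vir}$ having a square root, deformations/obstructions dual up to $\kappa_0$) guarantees the total is finite.
\item Since both $\sigma_1,\sigma_2$ are generic slopes (in $\ker\kappa$) with the same attracting/repelling pattern on every $w_i$, every term has the same limit under $\sigma_1$ and $\sigma_2$; summing gives the claimed equality, and membership in $\Q[\kappa^{1/2}]$ follows since each summand lies there.
\end{enumerate}

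The main obstacle I expect is Step~2: making precise the claim that the only source of non-properness in the $\ker(\tau)$-fixed locus is a product of affine spaces indexed by the non-compact directions, and that the induced obstruction theory on the proper factors genuinely satisfies the square-root and duality hypotheses of \cite[Prop 7.4]{NO} with the \emph{same} $\kappa_0=\kappa|$. This requires a careful analysis of the edge/vertex structure of $DT(X,\beta,n)$ for toric $X$ — essentially that the Hilbert scheme of curves, near a torus-fixed locus, looks like a product over the edges and vertices of the toric graph, with the non-compact edges contributing exactly the affine factors — together with a check that $\tilde{\mathcal O}^{\vir}$ is compatible with this decomposition. A secondary technical point is verifying that ``generic slope'' can be chosen simultaneously transverse to all the hyperplanes $\ker(w_i)$ and to the walls governing the localization, so that the limit is genuinely computed by a single fixed-point formula; this is where the precise genericity definition deferred to Section~\ref{secindependence} does its work.
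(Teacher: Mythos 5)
Your overall strategy --- decompose the moduli space so that non-properness is isolated in explicit affine factors, treat those by hand, and quote \cite[Prop 7.4]{NO} on the proper residue --- is not the paper's route, and the step you yourself flag as the main obstacle (Step 2) is a genuine gap rather than a technicality. For a toric Calabi--Yau threefold the full-torus fixed locus $DT(X,\beta,n)^T$ is already a finite set of points, so localizing with respect to a generic cocharacter $\tau$ produces no interesting proper factors $F$ carrying an induced symmetric obstruction theory to which \cite[Prop 7.4]{NO} could be applied; and for an intermediate subtorus such as $\mathbb{T}_w$ with $w$ a non-compact direction, the fixed locus is a genuinely non-proper, singular scheme (e.g.\ a DT-type moduli space attached to a non-compact toric edge) that does not split as a product of a proper base with an affine space. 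Nothing in the structure of $DT(X,\beta,n)$ supports the product decomposition your Steps 2--3 rest on, so the reduction to the compact case does not go through as written.

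The paper's proof sidesteps any geometric decomposition and works with the single rational function $\chi(M,\tilde{\mathcal{O}}^{\vir})$. The two ingredients are: (i) Proposition \ref{denomchar}: for each \emph{compact} weight $w$, localize with respect to the codimension-one subtorus $\mathbb{T}_w$; properness of $\mathscr{M}^{\mathbb{T}_w}$ gives an expression whose denominator is built from weights of $\mathcal{N}^{\vir}_{\mathscr{M}/\mathscr{M}^{\mathbb{T}_w}}$, none of which vanish on $\mathbb{T}_w$, so the character has no pole along $1=\zeta w$; hence after cancellation its denominator is a product of factors $(1-w)$ with $w$ non-compact. (ii) An elementary algebraic lemma (Proposition \ref{genindependence}): for an $A$-balanced rational function $p(t)/\prod_i(1-w_i)$, the limit along a generic one-parameter subgroup of $A$ depends only on the attracting/repelling signs of the $w_i$ (normalize so every $w_i$ is attracting; the limit is then the $A$-invariant part of $p$). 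The balancedness is supplied by the symmetry $v_i=\kappa/u_i$ of the obstruction theory, exactly as you observe in your Step 3. If you want to salvage your outline, the correct replacement for your Step 2 is this pole analysis on the total localization expression, not a decomposition of $M$ itself; once the denominators are controlled, neither \cite[Prop 7.4]{NO} nor any properness of components of $M$ is needed.
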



\subsubsection{Vertex and edge contributions} The next step is to explicitly compute limits of $Z_{DT}(X)^{\sigma}$ for particular $X$ and $\sigma$. For toric threefolds $X$, the partition function $Z_{DT}(X)$ can be studied using equivariant localization. As shown in \cite{MNOP1}, the $T$-fixed points of $DT(X)$ are given by configurations of three-dimensional partitions along the 1-skeleton of the toric polytope $\Delta(X)$; the partition function can be written in terms of contributions from the vertices and edges of the 1-skeleton. The edge contributions admit an explicit description in terms of $(\C^2)^{[n]}$; the contribution of the relevant edge configurations are given in Propositions \ref{edge1} and \ref{edge2}. 

The vertex contributions $V(\lambda,\mu,\nu)$ are more complicated.
Their cohomological analogs are studied using the topological vertex; see \cite{AKMV, ORV}. In K-theory, it is difficult to analyze $V(\lambda,\mu,\nu)$ in full generality. However, for generic $\sigma$, the resulting limit $V(\lambda,\mu,\nu)^{\sigma}$ simplifies: the equivariant parameters enter this limit only through the character $\kappa^{1/2}$. 

Moreover, it was explained in \cite[Sec. 8]{NO} that for certain slopes $\sigma$ called \emph{preferred slopes} (roughly, those that nearly preserve a coordinate direction in each toric chart of $X$), the the limit $V(\lambda,\mu,\nu)^{\sigma}$ is related to the refined topological vertex of \cite{IKV}. Such limits are well-suited to computation. In Proposition \ref{vertexterm}, we explicate the precise relationship between such $V(\lambda,\mu,\nu)^{\sigma}$ and the refined topological vertex. In particular, the choice of direction of preferred slope corresponds to a choice of preferred direction of the refined topological vertex. 

Theorem \ref{independence} may therefore be regarded as a mathematical formulation of the independence of the refined topological partition function of toric Calabi-Yau threefolds $X$ under certain choices of preferred direction; examples of this independence have been computed in \cite{AK, IKV}. We note, however, that limits $Z_{DT}(X)^{\sigma}$ often exhibit some dependence on $\sigma$; in other words, the refined topological partition function of a threefold $X$ can depend on the choice of preferred direction. This phenomenon can be seen, for example, in \cite[Sec. 5.3]{IKV}.

We remark that in the physics literature, computations using the refined topological vertex incorporate different so-called ``framing factors''; see for instance \cite{IKV} and \cite{Ta}. The computation using DT invariants provides one consistent way to choose such factors. 

 \begin{figure}[htb] 
\begin{subfigure}{1in}
\includegraphics[height=1.5in]{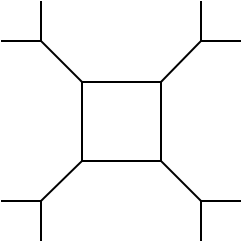}
\end{subfigure}
\hspace*{1.5in} 
\begin{subfigure}{1in}
\includegraphics[height=1.5in]{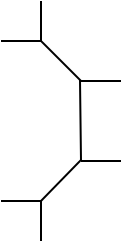}
\end{subfigure}
 \caption{The toric geometries that give rise to Theorem \ref{symmetry} and Proposition \ref{denominator}, respectively} \label{firstfig}
\end{figure}

\subsubsection{Limits of DT partition functions} We use these vertex and edge computations to compute $Z_{DT}(X)^{\sigma_1}$ and $Z_{DT}(X)^{\sigma_2},$ where $X$ is one of the Calabi-Yau threefolds whose toric skeleta are given in Figure \ref{firstfig}, and $\sigma_1$ and $\sigma_2$ are two preferred slopes satisfying the hypotheses of Theorem \ref{independence}.
After a certain specialization, we deduce Theorem \ref{symmetry} and Proposition \ref{denominator}.

When applied to threefolds with more complicated toric diagrams, this approach will produce more complicated identities involving higher-rank Nekrasov functions. For example, a new feature that should emerge is a relationship between tautological integrals over $\Hilb(\C^2)$ and those over moduli spaces of higher rank sheaves. 

\subsection{Outline} In Section \ref{prelim} we define K-theoretic Donaldson-Thomas invariants of Calabi-Yau threefolds. In Section \ref{secindependence}, we prove a general relationship between non-compact directions in a moduli space and the limits of Euler characteristics of coherent sheaves under slopes in order to deduce Theorem \ref{independence}. 
In Section \ref{secbuildingblocks}, we recall from \cite{MNOP1,NO,Ot} the descriptions of the vertex and edge contributions to K-theoretic Donaldson-Thomas invariants of toric Calabi-Yau threefolds in terms of partitions, and write down explicit combinatorial descriptions of their limits under preferred slopes. In Section \ref{threefold}, we apply these results to specific Calabi-Yau threefolds to deduce Theorem \ref{symmetry} and Proposition \ref{denominator}.
We then use Theorem \ref{symmetry} and Proposition \ref{denominator} in Section \ref{tautological} to study certain tautological classes over the Hilbert scheme of points on a general surface.

\subsection{Acknowledgments}

I am grateful to my thesis advisor Andrei Okounkov for his guidance and support, and Rahul Pandharipande for asking the question that led to the present work. I thank Chiu-Chu Melissa Liu, Alina Marian, Davesh Maulik, Anton Mellit, Andrei Negu\unichar{539}, Dragos Oprea, Petr Pushkar, Andrey Smirnov and Richard Thomas for helpful conversations and feedback. I also thank the anonymous referees for their careful attention and thoughtful suggestions.

This work was partially supported by the Department of Defense (DoD) through the National Defense Science and Engineering Graduate Fellowship (NDSEG) Program, by the National Science Foundation under Grant No. DMS-1440140 while the author was in residence at the Mathematical Sciences Research Institute in Berkeley, CA during the Spring 2018 semester, and by EPSRC grant EP/R013349/1.

\subsection{Notation}\label{part}

We fix some notation concerning partitions. 

A {\em two-dimensional partition} $\lambda$ is either the empty sequence $\emptyset$ or an ordered finite sequence $(\lambda_i)=(\lambda_1,\lambda_2,\cdots)$ of nonincreasing positive integers. Equivalently, it is a finite collection of ordered pairs $(b_1,b_2)\in \Z_{\geq 0}^2$ such that if $(b_1,b_2)\in \lambda$, then for any $b'_1, b'_2$ satisfying $0\leq b'_1\leq b_1$ and $0\leq b'_2\leq b_2,$ we also have $(b'_1,b'_2)\in \lambda$; the bijection between these two notions is given by $$\{(\lambda_i)\}\mapsto \{ (b_1,b_2)\ |\ b_2 \leq \lambda_{b_1+1}-1\}.$$ We define the {\em size} $|\lambda|$ of a partition $\lambda$ to be $\sum_{i} \lambda_i$, the {\em length} of $\lambda$ to be the length of the sequence $(\lambda_i)$, and we set $$||\lambda||^2=\sum_{i} |\lambda_i|^2.$$

We sometimes abbreviate an ordered pair $(b_1,b_2)$ by the symbol $\square$; we refer to such a square as a \emph{box} of $\lambda.$ Two-dimensional partitions are often drawn as top-left justified configurations of boxes, with $\lambda_i$ boxes in the $i$th row from the top; see Figure \ref{2dpar}.

We let $\lambda^t$ denote the conjugate partition of $\lambda$; that is $$\lambda^{t}=\{ (b_2,b_1)\ |\ (b_1,b_2)\in \lambda\}.$$ For a box $\square=(b_1,b_2)\in \lambda$, we let $a(\square)$ and $l(\square)$ denote the arm and leg lengths of $\square$, that is $$a(\square)= \lambda_{b_1+1}-b_2-1,\ l(\square)=\lambda^t_{b_2+1}-b_1-1;$$ see Figure \ref{2dpar}. When our meaning is clear, we abbreviate $a(\square)$ and $l(\square)$ by $a$ and $l$, respectively. 

\begin{figure}[htb]
 \begin{subfigure}{1in}
\includegraphics[height=1.7in]{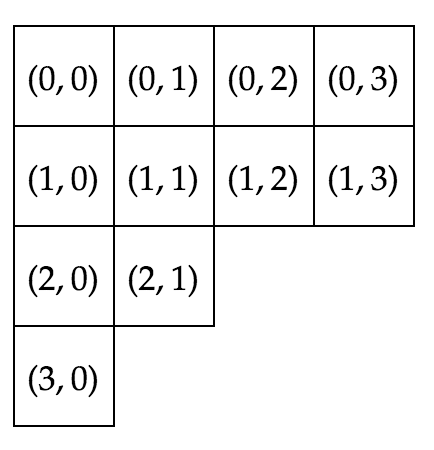}
\end{subfigure}
\hspace*{1.5in}
 \begin{subfigure}{1in}
\includegraphics[height=1.7in]{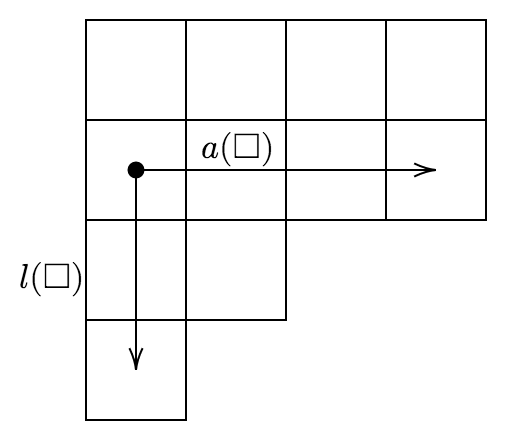}
\end{subfigure}
\caption{The two-dimensional partition $\lambda=(4,4,2,1)$ with boxes $(b_1,b_2)$ labelled. The box $\square=(1,0)\in \lambda$ has $a(\square)=3$ and $l(\square)=2$.} 
\label{2dpar}
\end{figure}

For our purposes, a {\em three-dimensional partition} $\pi$ is a (possibly infinite) collection of points of $\Z_{\geq 0}^3$ such that:

\begin{enumerate}
\item If $(b_1,b_2,b_3)\in \pi$, and $0\leq b'_i\leq b_i$ for $i=1,2,3$, then $(b_1',b'_2,b'_3)\in \pi.$ 
\item There exists some integer $B_{\pi}$ such that, for any $(b_1,b_2,b_3)\in \pi$, at most one of the coordinates $b_1, b_2, b_3$ satisfies $b_i>B_{\pi}$; in other words, the asymptotics of $\pi$ along the coordinate directions of $\R^3_{\geq 0}$ are given by (finite, possibly empty) two-dimensional partitions.
\end{enumerate}

One can visualize such a partition as a collection of unit boxes in the octant $\R_{\geq 0}^3$ centered at the points $(b_1+\frac{1}{2}, b_2+\frac{1}{2}, b_3+\frac{1}{2})$; see for example Figure \ref{3dpar}. Again, when clear, we abbreviate $(b_1,b_2,b_3)$ by the symbol $\square$. 

 \begin{figure}[htb]
\centering{
\def\svgwidth{2.5in}
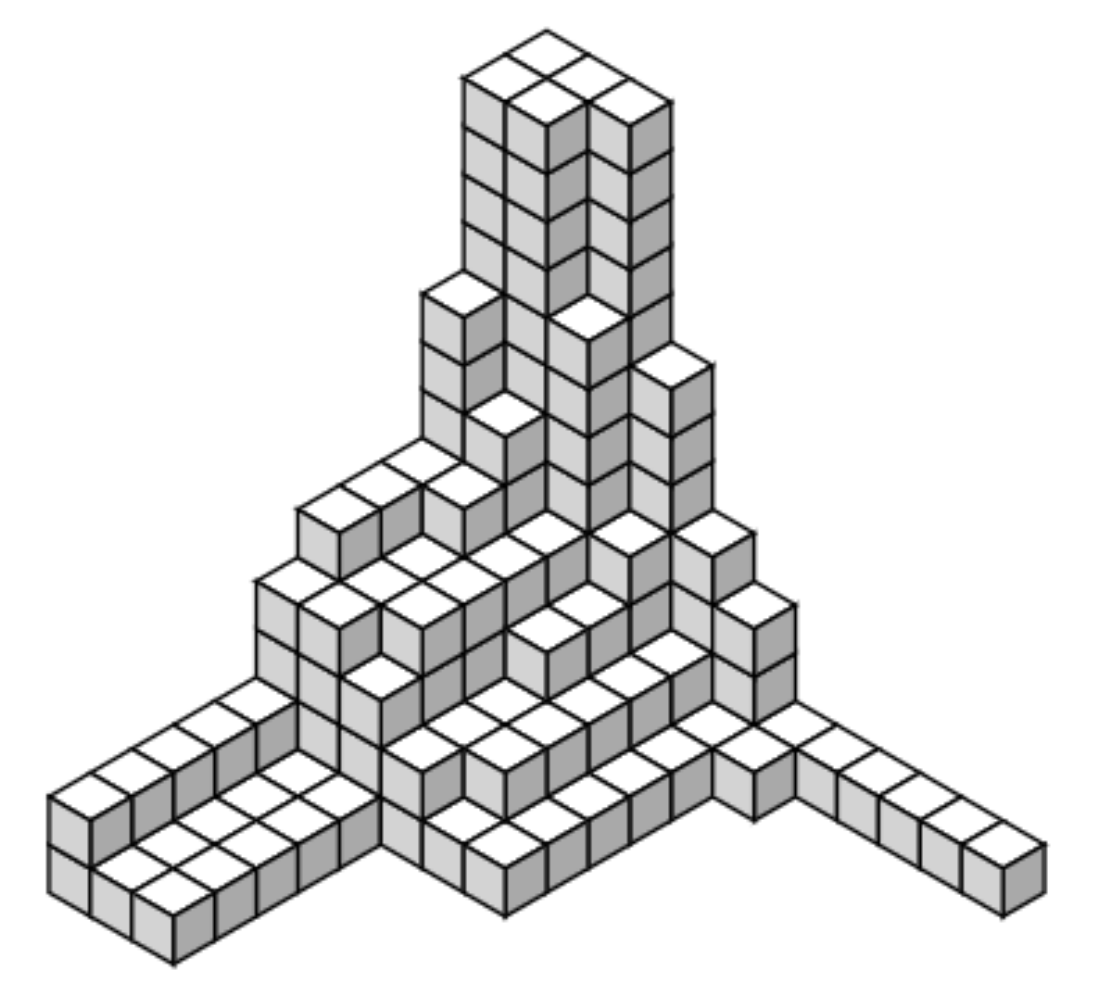
\caption{A three-dimensional partition $\pi$ with asymptotics ${\pi^{(1)}=(2,1,1)},$  $\pi^{(2)}=(1)$ and $\pi^{(3)}=(3,2)$} \label{3dpar}
}
\end{figure}

Given a three-dimensional partition $\pi$, let $\pi^{(k)}$ denote the two-dimensional partition describing the asymptotics of $\pi$ along the $k$-th coordinate axis; formally, we set $$\pi^{(1)}= \{(b_1,b_2)\ |\ (b, b_1, b_2) \in \pi\  \ {\rm for\ all}\ b\geq 0\},$$
$$\pi^{(2)}= \{(b_1,b_2)\ |\ (b_2, b, b_1) \in \pi\  \ {\rm for\ all}\ b\geq 0\},$$
$$\pi^{(3)}= \{(b_1,b_2)\ |\ (b_1, b_2, b) \in \pi\ \ {\rm for\ all}\ b\geq 0\}.$$ For example, if $\pi$ denotes the three-dimensional partition in Figure \ref{3dpar}, we have $\pi^{(1)}=(2,1,1), \pi^{(2)}=(1),$ and $\pi^{(3)}=(3,2).$ Here, we have chosen the labellings so that the two-dimensional partitions are compatible with a fixed choice of orientation on $\R_{\geq 0}^{3}$.

We let $\pi\langle k\rangle$ denote the three dimensional partition consisting of boxes belonging to infinite leg of the partition $\pi$ along the $k$-th coordinate axis; formally, we set 
$$\pi\langle 1\rangle= \{ (b,b_1,b_2)\ |\ (b_1, b_2) \in \pi^{(1)}, b\in \Z_{\geq 0} \},$$
$$\pi\langle 2\rangle= \{ (b_2,b,b_1)\ |\ (b_1, b_2) \in \pi^{(2)} , b\in \Z_{\geq 0} \},$$
$$\pi\langle 3\rangle= \{(b_1,b_2,b)\ |\ (b_1, b_2) \in \pi^{(3)}, b\in \Z_{\geq 0}\}.$$

\section{K-theoretic DT invariants}\label{prelim}

We recall the relevant aspects of K-theoretic Donaldson-Thomas invariants as introduced in \cite{NO}.

Let $X$ be a smooth threefold. We impose the further assumption $X$ is toric Calabi-Yau; in particular, $X$ is not projective. 

We remark that while this assumption has the advantage of simplifying the definition of K-theoretic Donaldson-Thomas invariants, it also obscures the generality in which these invariants can be formulated. In particular, a main source of inspiration for their definition is a conjectural connection with curve counting in associated Calabi-Yau fivefolds (see \cite{NO, Ot}); such a relationship is expected for any smooth threefold $X$.

In this section, we let $T$ denote the three-dimensional torus $\mathrm{diag}(t_1,t_2,t_3)$ acting on $X$.

\subsection{Donaldson-Thomas moduli space}\label{DTdef}

Let $DT(X)$ denote the Hilbert scheme of curves in $X$. The moduli space $DT(X)$ is a union of components $DT(X,\beta,n)$ parametrizing projective subschemes $Y\subset X$ whose components are of dimension at most 1 with $[Y]=\beta$ and $\chi(\mathcal{O}_Y)=n$; here $[Y]\in H_2(X,\Z)^{\mathrm{eff}}$ denotes the class given by the one-dimensional components of $Y$ weighted by their multiplicities. 

It is shown in \cite{Th} that $DT(X)$ admits another description as the moduli space of \emph{ideal sheaves}: rank $1$, torsion-free sheaves $\mathcal{I}_Y$ on $X$ with trivial determinant. On the level of points, one identifies a subscheme $\mathcal{O}_Y$ with its ideal sheaf $$\I_Y= {\rm ker} (\mathcal{O}_X \twoheadrightarrow \mathcal{O}_Y);$$ conversely, one passes from an ideal sheaf $\I_Y$ to the subscheme $$\mathcal{O}_Y={\rm coker}(\I_Y \hookrightarrow (\I_Y)^{\vee\vee}\cong \mathcal{O}_X).$$ 

In contrast to Hilbert schemes of points on smooth surfaces, which are smooth, Hilbert schemes of curves on threefolds are generally of unknown dimension and highly singular. However, in \cite{Th}, Thomas used the ideal sheaf description to equip $DT(X)$ with a $T$-equivariant symmetric perfect obstruction theory $E^{\bullet} \to \mathbb{L}_{DT(X)}$ in the sense of Behrend-Fantechi (see \cite{BF}). By the constructions of, for example, \cite[5.4]{BF}, \cite{CFK}, and \cite[Sec 2]{Lee}, the obstruction theory gives rise to the $T$-equivariant virtual sheaves needed for K-theoretic enumerative computations:  the \emph{virtual structure sheaf} $$\mathcal{O}^{\vir}\in K_T(DT(X)),$$ the \emph{virtual tangent sheaf} $$\mathcal{T}^{\vir} \in K_T(DT(X)),$$ and the \emph{virtual canonical bundle} $$\mathcal{K}^{\vir}=\det (\mathcal{T}^{\vir})^{\vee} \in \Pic_T(DT(X)).$$ 

In particular, from the obstruction theory of \cite{Th}, one concludes $$\mathcal{T}^{\vir}_{{\mathcal{I}}}= \Def_{\I}-\Obs_{\I} = \Ext^1(\I,\I)-\Ext^2(\I,\I),$$ and hence $$\K^{\vir}_{{\mathcal{I}}}= \frac{\det\Obs_{\mathcal{I}}}{\det\Def_{\mathcal{I}}}.$$ As $X$ is Calabi-Yau, the anticanonical bundle $\K_{X}^{\vee}$ is a trivial bundle twisted by some $T$-weight $\kappa$; by Serre duality, we have \begin{align}\label{obssym} \Def_{\mathcal{I}}=(\Obs_{\mathcal{I}})^{\vee}\cdot \kappa,\end{align} as $T$-characters.

\subsection{Modified virtual structure sheaf}\label{virtstruct}
 
The definition of K-theoretic Donaldson-Thomas invariants incorporates a twist of the virtual structure sheaf by a square root of the virtual canonical bundle. This modification endows the resulting invariants with certain symmetries that make for easier analysis; it also aligns the invariants with the indices of certain Dirac operators of interest in physics (see for example, \cite[Sec. 3.2.7]{O}). 

The existence of such a square root is ensured by the following proposition.

\begin{proposition}[{\cite[Sec 6]{NO}}]
The line bundle $\K^{\vir}$ admits a square root $(\K^{\vir})^{1/2}$ in $\Pic(DT(X))$. Moreover, if $\tilde{T}$ is a minimal cover of $T$ on which the square root $\kappa^{1/2}$ is defined, the line bundle $(\K^{\vir})^{1/2}$ carries a canonical $\tilde{T}$-equivariant structure.
\end{proposition}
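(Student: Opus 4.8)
The plan is to establish the existence of a square root of $\K^{\vir}$ by exhibiting a natural square root of the virtual tangent complex $\mathcal{T}^{\vir}$ itself, up to a correction coming from $\kappa$. The starting point is the self-duality relation \eqref{obssym}, which says that $\mathcal{T}^{\vir}_{\mathcal{I}} = \Ext^1(\I,\I) - \Ext^2(\I,\I)$ satisfies $\mathcal{T}^{\vir} + (\mathcal{T}^{\vir})^{\vee}\cdot\kappa = 0$ in a suitable sense (more precisely, the deformation and obstruction pieces are Serre-dual up to the twist $\kappa$). This should be upgraded from a pointwise statement about $T$-characters to a statement about the global complex $E^{\bullet}$ on $DT(X)$: Thomas's obstruction theory arises from the complex $R\Hom(\I,\I)_0[1]$ (trace-free part), and Serre duality on the Calabi-Yau threefold $X$ gives a quasi-isomorphism relating this complex to its derived dual twisted by $\K_X = \mathcal{O}_X\cdot\kappa^{-1}$. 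Taking determinants, $\K^{\vir} = \det(\mathcal{T}^{\vir})^{\vee}$ is then forced to be of the form $L^{\otimes 2}\otimes(\text{trivial bundle twisted by }\kappa)$ for some line bundle $L$, provided one can split off the even part of the complex.

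First I would recall the construction of $\mathcal{O}^{\vir}$, $\mathcal{T}^{\vir}$ and $\K^{\vir}$ from the two-term complex $E^{\bullet}$, and then invoke the Calabi-Yau Serre duality isomorphism to get the symmetry of $E^{\bullet}$; this identifies $\K^{\vir}$, after removing the equivariant twist by $\kappa$, with the square of a determinant line bundle associated to half of the (self-dual) complex. The non-equivariant statement — that $(\K^{\vir})^{1/2}\in\Pic(DT(X))$ exists — is essentially the content of \cite[Sec.~6]{NO}, which I would cite for the construction; the new point to check carefully is the equivariant refinement. For that, I would track the $T$-action through the construction: the only obstruction to a $T$-equivariant structure on the square root is the need to halve the $T$-weight $\kappa$ of the anticanonical bundle, and passing to the minimal cover $\tilde T$ on which $\kappa^{1/2}$ is a well-defined character removes exactly this obstruction. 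One then checks that $(\K^{\vir})^{1/2}$, built as (non-equivariant line bundle)$\otimes\,\kappa^{1/2}$, carries a canonical $\tilde T$-linearization, and that this linearization is independent of choices up to the usual ambiguity.

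I expect the main obstacle to be the globalization step: the self-duality \eqref{obssym} is stated as an identity of $T$-characters fibrewise, and promoting it to a canonical isomorphism of the complex $E^{\bullet}$ with its shifted dual — in a way that behaves well enough to extract a genuine square root in $\Pic$, rather than merely in K-theory or up to 2-torsion — requires care. Concretely, one must ensure the "half" of the complex one splits off is well-defined as a line bundle on the possibly singular, non-compact scheme $DT(X)$, and that the resulting $L$ is independent of the local trivializations used to define it; this is where the toric Calabi-Yau hypothesis (and the structure of $DT(X)$ as a moduli of ideal sheaves with fixed trivial determinant, killing a potential ambiguity) is used. Once that is in place, the equivariant enhancement is a formal consequence of working over $\tilde T$, and the proposition follows.
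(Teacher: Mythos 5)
Your outline follows essentially the same route the paper indicates: the paper gives no independent proof of this proposition but defers to the Serre-duality argument of \cite[6.2.1]{NO} for the Calabi-Yau case, which is exactly the globalized self-duality of the obstruction theory you describe, together with the passage to the cover $\tilde{T}$ on which $\kappa^{1/2}$ is defined to obtain the equivariant structure. The one caveat is that your sketch (like the paper's remark) ultimately leans on \cite[Sec 6]{NO} for the delicate globalization step you correctly flag, so it is a faithful reconstruction of the cited argument rather than a self-contained proof.
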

 
We remark that in the special case where $X$ is Calabi-Yau, this proposition follows from an argument using Serre duality given in \cite[6.2.1]{NO}; such a statement is expected for general threefolds $X$, and is proven in \cite[Sec 6]{NO} when $DT(X)$ is replaced by the moduli space $PT(X)$ of stable pairs.

\begin{definition} The {\em modified virtual structure sheaf} is $$\tilde{\mathcal{O}}^{\vir}=\mathcal{O}^{\vir}\otimes (\K^{\vir})^{1/2}.$$
\end{definition}

\subsection{K-theoretic Donaldson-Thomas invariants}

K-theoretic Donaldson-Thomas invariants are constructed as $\tilde{T}$-equivariant holomorphic Euler characteristics of $\tilde{\mathcal{O}}^{\vir}$. Define the  \emph{K-theoretic DT partition function} to be $$Z_{DT}(X) = \sum_{\substack{\beta\in H_2(X,\Z)^{\rm eff}\\ n\in \Z}}Q^{n}u^{\beta}\chi(DT(X,\beta,n),\tilde{\mathcal{O}}^{\vir})\in\Q(t_1,t_2,t_3,\kappa^{1/2})[[Q,Q^{-1}]][u^{\beta}].$$
Given some fixed $\beta$, the space $DT(X,\beta,n)$ is empty for $n\ll 0$.

This definition does not depend on the choice of square root $(\K^{\vir})^{1/2};$ we will justify this independence after introducing equivariant localization.

It is convenient to normalize the Donaldson-Thomas partition function by the contribution from the Hilbert scheme of points on $X$; to this end, we define the \emph{reduced K-theoretic DT partition function} to be $$Z_{DT}^{'}(X) =\frac{Z_{DT}(X)}{\displaystyle\sum_{n\in \Z}Q^{n}\chi(DT(X,0,n),\tilde{\mathcal{O}}^{\vir})}\in \Q(t_1,t_2,t_3,\kappa^{1/2})[[Q,Q^{-1}]][u^{\beta}].$$ 

We remark that, when $X$ is not Calabi-Yau, the general definition of $K$-theoretic DT invariants of \cite{NO} also takes as input the total space $\mathcal{L}_1\oplus \mathcal{L}_2$ of two line bundles over $X$ satisfying $\mathcal{L}_1\otimes \mathcal{L}_2\simeq \K_X$. In this setting, the definition of $\tilde{\mathcal{O}}^{\vir}$ incorporates a tautological bundle obtained from $\mathcal{L}_1$ and $\mathcal{L}_2$. For Calabi-Yau $X$ and trivial $\mathcal{L}_1, \mathcal{L}_2$, this more general construction specializes to our construction of $\tilde{\mathcal{O}}^{\vir}$ above; moreover, the boxcounting parameter $Q$ arises geometrically, as the torus weight of $\mathcal{L}_1$.

\subsection{The plethystic exponential}

Roughly speaking, the plethystic exponential takes as input the character of a virtual, possibly infinite-dimensional, representation $V$ of some group and, in the spirit of (\ref{lambsym}), outputs the character of $$\sum_{n\geq 0}\Sym^{n}V,$$ when this character is well-defined. To simplify the exposition, we give a precise definition for the two settings in which we will use the plethystic exponential and refer the reader to \cite[Sec. 3]{M} or \cite[Sec. 2.1]{O} for an expanded discussion. 

First, given formal variables $x_1,\ldots,x_m$ and finitely many Laurent monomials $u_i(x_1,\cdots,x_m)$ and  $v_j(x_1,\cdots,x_m)$ such that no $u_i$ is equal to $1$, we define \begin{align}\label{smallsym} \Sym^{\bullet}\Big(\sum_i u_i-\sum_j v_j\Big)=\frac{\prod_{j} 1-v_j}{\prod_{i} 1-u_i}\in \Q(x_1,\ldots,x_n).\end{align} For example, $$\Sym^{\bullet}(2x_1-x_1x_2^{-3})=\frac{1-x_1x^{-3}_2}{(1-x_1)^{2}},\ \ \ \ \ \ \Sym^{\bullet}(3x_1-1)=0.$$

Second, this definition can be generalized so that $\Sym^{\bullet}$ takes as arguments certain series whose coefficients are rational functions. The following class of examples will be sufficient for our purposes. Let $z, x_1,\cdots,x_m, t_1, t_2$ be formal variables and $w_1, w_2$ be monomials in $t_1, t_{2}$. Given Laurent polynomials $f_i(x_1,\cdots,x_m,t_1,t_2)$ for $i>0$, define \begin{align} \label{ratsym}&\Sym^{\bullet}\Big(\sum_{i\geq 1}z^i\frac{f_i(x_1,\cdots,x_m,t_1,t_2)}{(1-w_1)(1-w_2)}\Big)\nonumber \\&=\mathrm{exp}\Big(\sum_{n\geq 1} \frac{(z^i)^n}{n} \frac{f_i(x_1^n,\cdots,x_m^n,t_1^n,t_{2}^n)}{(1-w_1^{n})(1-w_2^{n})}\Big)\in \Q[x_1^{\pm},\cdots,x_m^{\pm}](t_1,t_2)[[z]].\end{align}

For example, the right hand side of (\ref{denom}) is equal to $$\Sym^{\bullet}\Big(-z\frac{(1-m_1)(1-m_2)}{(1-t_1^{-1})(1-t_2^{-1})}\Big).$$ For the purposes of stating (\ref{nek}) later, we mention that the definition (\ref{ratsym}) can be extended in a straightforward manner when there are arbitrarily many variables $t$ and the product $(1-w_1)(1-w_2)$ appearing in the denominator of the argument of $\Sym^{\bullet}$ is replaced by an arbitrary finite product of terms of the form $(1-w)$.

With some care, each of the definitions (\ref{smallsym}) and (\ref{ratsym})  can be formulated in the spirit of the other. As in \cite[Ex. 2.1.3]{O}, we explain how to pass from (\ref{ratsym}) to the form of (\ref{smallsym}). Pick a consistent direction in which to expand the rational functions $1/(1-w_1)$ and $1/(1-w_2)$ as power series in $t_1$ and $t_2$. Then, one may write \begin{align}\label{expansion}\sum_{i\geq 1}z^i\frac{f_i(x_1,\cdots,x_m,t_1,t_2)}{(1-w_1)(1-w_2)}=\sum_{i\geq 1}\Big(\sum_{j} z^{i}u_{i,j}-\sum_{k} z^{i}v_{i,k}\Big),\end{align} where the $u_{i,j}$ and $v_{i,k}$ are Laurent monomials in $x_1,\cdots,x_m$ and $t_1,t_2$. In contrast to the argument of (\ref{smallsym}), there may be infinitely many nonzero $u_{i,j}$ and $v_{i,k}$. Then, we have \begin{align}\label{compsym}\Sym^{\bullet}\Big(\sum_{i\geq 1}z^i\frac{f_i(x_1^n,\cdots,x_m^n,t_1^n,t_{2}^n)}{(1-w_1)(1-w_2)}\Big)=\prod_{i}\frac{\prod_{k} 1-z^iv_{i,k}}{\prod_{j} 1- z^i u_{i,j}},\end{align} where the denominator of the right hand side is expanded in positive powers of $z$. Matching up coefficients of powers of $z$, the equality (\ref{compsym}) can be regarded either as series of equalities of rational functions (to be precise, as an equality in $\Q[x_1^{\pm},\cdots,x_m^{\pm}](t_1,t_2)[[z]]$), or, expanding the denominators of the left-hand side as power series in $t_1,t_2$ in the same direction selected in (\ref{expansion}), as an equality of power series (that is, in $\Q[x_1^{\pm},\cdots,x_m^{\pm}][[t_1,t_2]][[z]]$.) It will be helpful to pass between these two interpretations for the calculation of Section \ref{eval}. In particular, when regarded as series with coefficients in rational functions, the right hand side of (\ref{compsym}) does not depend on the choice of direction for $t_1$ and $t_2$.

For example, expanding in positive powers of $t$, we have \begin{align*}\Sym^{\bullet} (\frac{z}{1-t})=\prod_{i\geq 0}\frac{1}{1-zt^{i}}=\prod_{i\geq 0}(1+zt^{i}+z^2t^{2i}+\cdots)=\sum_{j,k\geq 0} p(j,k)z^jt^k,\end{align*} where $p(j,k)$ is the number of two-dimensional partitions $\lambda$ of size $k$ and length at most $j$. Using the map $\lambda\mapsto \lambda^t$ one sees that $p(j,k)$ is also the number of two-dimensional partitions $\mu$ of size $k$ with $\lambda_1\leq j$. Thus $$\sum_{k\geq 0} p(j,k)t^k=\prod_{i=1}^{j} \frac{1}{1-t^i}.$$

  On the other hand, expanding in negative powers of $t$ we obtain \begin{align*} \Sym^{\bullet}(\frac{z}{1-t})=\Sym^{\bullet}(\sum_{i\geq 1} -zt^{-i})=\prod_{i\geq 1}(1-zt^{-i})=\sum_{j,k\geq 0} (-1)^jq(j,k)z^jt^{-k},\end{align*} where $q(j,k)$ is the number of partitions $\lambda$ of size $k$ and length $j$ such that $\lambda_1>\lambda_2>\cdots>\lambda_j$. Using the map $$\lambda\mapsto (\lambda_1-(j-1),\lambda_2-(j-2),\cdots,\lambda_j)^{t}$$ one sees that $q(j,k)$ also counts the number of partitions $\mu$ of size $k-j(j-1)/2$ with $\mu_1=j$. Thus $$\sum_{k\geq 0} (-1)^jq(j,k) t^{-k}=(-1)^jt^{-j(j-1)/2}\cdot t^{-j}\cdot\prod_{i=1}^{j}\frac{1}{1-t^{-j}}.$$ 
  
As rational functions in $t$, we have $$(-1)^jt^{-j(j-1)/2}\cdot t^{-j}\prod_{i=1}^{j}\frac{1}{1-t^{-j}}=\prod_{i=1}^{j} \frac{1}{1-t^i}.$$
So, whether expanding in $t$ or $t^{-1}$, we obtain $$\Sym^{\bullet} (\frac{z}{1-t})=\sum_{j\geq 1} \frac{z^j}{\prod_{i=1}^j (1-t^i)}=\mathrm{exp}(\sum_{n\geq 1} \frac{z^n}{1-t^n}).$$

\subsection{Equivariant localization}
We study K-theoretic invariants using virtual equivariant localization as developed in \cite{FG, GP}, \cite{CFK} and \cite{Q}; a concise exposition of the application we use can be found in \cite{Th2}. In this subsection, let $\mathscr{M}$ be a quasi-projective scheme equipped with the action of a torus $\mathbb{T}$ and a $\mathbb{T}$-equivariant perfect obstruction theory. Suppose that the fixed locus $\mathscr{M}^\mathbb{T}$ is compact (and nonempty). Let $\F$ be a $\mathbb{T}$-equivariant coherent sheaf on $\mathscr{M}$. Equivariant localization asserts first, that $$\chi(\mathscr{M},\mathcal{F}\otimes \mathcal{O}^{\vir})\in \Q[t_k^{\pm}]\bigg[\frac{1}{1-u_i}\bigg],$$ where the $u_i$ range over the set of $\mathbb{T}$-weights appearing in $N^{\vir}_{\mathscr{M}/\mathscr{M}^\mathbb{T}}.$ 

Moreover, it expresses $\chi(\mathscr{M},\mathcal{F}\otimes \mathcal{O}^{\vir})$ as an Euler characteristic over the fixed locus $\mathscr{M}^\mathbb{T}$. Let us further assume that the fixed locus $\mathscr{M}^\mathbb{T}$ consists of isolated points and that the restriction of the obstruction theory to $\mathscr{M}^{\mathbb{T}}$ has no $\mathbb{T}$-fixed part. These assumptions will hold for DT moduli space by \cite[Sec. 4.5]{MNOP1}. Under these assumptions, the formula reads  $$\chi(\mathscr{M},\F\otimes \mathcal{O}^{\vir})= \sum_{p\in \mathscr{M}^\mathbb{T}} \chi(p,\F|_{p} )\otimes \Sym^{\bullet}\big(\chi(p,(\mathcal{T}^{\vir}_{p})^{\vee})\big),$$ where $\Sym$ is as defined in (\ref{smallsym}). By assumption, none of the monomials appearing in the character of any $\mathcal{T}^{\vir}_p$ are trivial. So,  the operation $\Sym$ is well-defined. 

In our applications to Hilbert scheme of points on smooth surfaces in Section \ref{tautological}, we also use the more classical variant \cite[Thm 3.5]{T} of K-theoretic equivariant localization where $\mathscr{M}$ is a smooth scheme; in this case the ordinary structure sheaves and tangent bundles replace their virtual analogs.


Let us now specialize to the situation where $\mathscr{M}$ is a component of $DT(X),$ the torus $\mathbb{T}$ is the double cover $\tilde{T}$ of $T=\mathrm{diag}(t_1,t_2,t_3)$, and $\F$ is $(\K^{\vir})^{1/2}.$ The symmetry of the perfect obstruction theory of $DT(X)$ ensures that resulting localization expression enjoys a particularly nice form. Given $\mathcal{I}\in DT(X)^{\tilde{T}}$, we may write the $\tilde{T}$-character of $\mathcal{T}^{\vir}_{\mathcal{I}}$ as $${T}^{\vir}_{\mathcal{I}}=\Def_{{\mathcal{I}}}-\Obs_{{\mathcal{I}}}=\sum_{i} u_{i}-v_{i},$$ where each $u_i$ and $v_i$ is a $\tilde{T}$-weight; we also have $$(\mathcal{K}^{\vir})^{1/2}_{\mathcal{I}}= \prod_i \frac{v_i^{1/2}}{u_i^{1/2}}.$$ By (\ref{obssym}), we may reorder the $u_i$ and $v_i$ so that that $$v_i=\frac{\kappa}{u_i}.$$ By virtual localization, we have the following equality of $\tilde{T}$-characters: \begin{align}\label{dtpar} Z_{DT}(X)&=\sum_{\substack{\beta\in H_2(X,\Z)^{\rm eff} \\ n\in \Z}}Q^nu^{\beta} \chi\bigg(DT(X,\beta,n)^{{T}}, \big((\mathcal{K}^{\vir})^{1/2}\otimes \Sym^{\bullet} \mathcal{T}^{\vir}\big)\big|_{DT(X,\beta,n)^T} \bigg) \nonumber\\&=\sum_{\substack{\beta\in H_2(X,\Z)^{\rm eff} \\ n\in \Z}} Q^nu^{\beta} \sum_{\mathcal{I}\in DT(X,\beta,n)^{{T}}} \prod_{i} \frac{ (\frac{\kappa}{u_i})^{1/2}-(\frac{\kappa}{u_i})^{-1/2}  }{u_i^{1/2}-u_i^{-1/2}}.
\end{align}

Motivated by this expression, we adopt the following notation from \cite{O,Ot}: given a $T$-character $V=\sum_i u_i-\sum_j v_j$ where $u_i,v_j\in T^{\vee}$, we set $$\hat{a}(V)=\frac{\prod_{j} v_j^{1/2}-v_{j}^{-1/2}}{\prod_{i} u_i^{1/2}-u_{i}^{-1/2}} \in \Q(t_k^{\pm 1/2}).$$

\subsection{Independence of choice of $(\K^{\vir})^{1/2}$}
If $\mathcal{L}$ is a $\tilde{T}$-equivariant line bundle on $DT(X)$ with $\mathcal{L}^{\otimes 2}\cong \K^{\vir}$ and $\mathcal{I}\in DT(X)$ is a torus fixed sheaf, then the $\tilde{T}$-weight of the fiber $\K^{\vir}|_{\mathcal{I}}$ is the square of the $\tilde{T}$-weight of the fiber $\mathcal{L}|_{\mathcal{I}}$. It follows from (\ref{dtpar}) that $Z_{DT}(X)$ does not depend on the choice of square root $(\K^{\vir})^{1/2}$. We remark that this is a special instance of an observation of Thomas (see \cite[Prop 2.6]{Th2}), who exhibits a canonical choice of square root of the restriction of $\mathcal{K}^{\vir}$ to the fixed locus of a quasi-projective scheme equipped with a torus action, a torus-equivariant symmetric perfect obstruction theory, and proper torus-fixed locus. 

As any two square roots of $\K^{\vir}$ differ by 2-torsion in $\Pic(DT(X)),$ the independence of $Z_{DT}(X)$ on the choice of square root can also be deduced from equivariant localization and virtual Hirzebruch-Riemann-Roch (\cite[Cor 3.4]{FG}).

\section{Slope independence}\label{secindependence}

For this section, we let $\mathscr{M}$ denote a quasi-projective scheme equipped with an action of a torus $\mathbb{T}$ and a $\mathbb{T}$-equivariant perfect obstruction theory. Examples include smooth schemes, DT moduli space and the Pandharipande-Thomas stable pair moduli space of a threefold (see \cite{PT1}). We impose the additional assumption that $\mathscr{M}^\mathbb{T}$ is proper and nonempty. We let $\F$ denote a $\mathbb{T}$-equivariant K-theory class of coherent sheaves on $\mathscr{M}$.

One assumption used in the analysis of K-theoretic enumerative invariants of Calabi-Yau threefolds in \cite[Sec. 7]{NO} is that the underlying moduli space has proper components. If $\mathscr{M}$ is proper, then the $\mathbb{T}$-character $\chi(\mathscr{M},\F\otimes \mathcal{O}^{vir})$ is a Laurent polynomial in $\mathbb{T}$. Let $w\in \mathbb{T}^{\vee}$ be a primitive weight. As $\chi(\mathscr{M},\F \otimes \mathcal{O}^{vir})$ is a Laurent polynomial, it follows that if the limit $\chi(\mathscr{M},\F  \otimes \mathcal{O}^{vir})^{\sigma}$ exists for any generic one-parameter subgroup $\sigma:\C^{\times}\to \ker(w),$ then $\chi(\mathscr{M},\F\otimes  \mathcal{O}^{vir})$ is a function of $w$ and, moreover, that \begin{align}\label{properind}\chi(\mathscr{M},\F \otimes \mathcal{O}^{vir})=\chi(\mathscr{M},\F \otimes \mathcal{O}^{\vir})^{\sigma}\in \Q[w]\end{align} for generic $\sigma$.

For example, suppose that $\mathscr{M}=\P^1\times \P^1$ (regarded as a smooth scheme) and that $\mathbb{T}=\C^{\times}_{t_1}\times \C^{\times}_{t_2}$ where each factor acts in the standard way on the corresponding factor of $\P^1\times \P^1$. Then, let $\mathcal{F}$ be the cotangent bundle $\mathcal{T}^*(\P^1\times \P^1)$ with the induced torus action. By equivariant localization, \begin{align}\label{P1P1ex}\chi(\P^1\times \P^1,\mathcal{T}^*(\P^1\times \P^1) )=&\frac{t_1^{-1}+t_2^{-1}}{(1-t_1^{-1})(1-t_2^{-1})}+\frac{t_1^{-1}+t_2}{(1-t_1^{-1})(1-t_2)} \nonumber\\&+\frac{t_1+t_2^{-1}}{(1-t_1)(1-t_2^{-1})}+\frac{t_1+t_2}{(1-t_1)(1-t_2)}.\end{align} While the right hand side of (\ref{P1P1ex}) can be computed by putting the rational functions over a common denominator, it can also be efficiently computed using (\ref{properind}). Namely, for any nonzero $r_1$ and $r_2$, among the four limits \begin{align*}&\lim_{z\to 0}\frac{z^{-r_1}t_1^{-1}+z^{-r_2}t_2^{-1}}{(1-z^{-r_1}t_1^{-1})(1-z^{-r_2}t_2^{-1})}, \lim_{z\to 0}\frac{z^{-r_1}t_1^{-1}+z^{r_2}t_2}{(1-z^{-r_1}t_1^{-1})(1-z^{r_2}t_2)}\\ &\lim_{z\to 0} \frac{z^{r_1}t_1+z^{-r_2}t_2^{-1}}{(1-z^{r_1}t_1)(1-z^{-r_2}t_2^{-1})}, \lim_{z\to 0} \frac{z^{r_1}t_1+z^{r_2}t_2}{(1-z^{r_1}t_1)(1-z^{r_2}t_2)},\end{align*} exactly two are equal to $-1$, and exactly two are equal to $0$. So, for any one-parameter subgroup $\sigma(r_1,r_2):\C^{\times}\to \ker(1)=\mathbb{T}$ given by $z\mapsto(z^{r_1},z^{r_2})$ with $r_1$ and $r_2$ nonzero, one has $$\chi(\P^1\times \P^1,\mathcal{T}^*(\P^1\times \P^1))^{\sigma(r_1,r_2)}=-2.$$ By (\ref{properind}), we conclude that $$\chi(\P^1\times \P^1,\mathcal{T}^*(\P^1\times \P^1))=-2$$ as a $\mathbb{T}$-equivariant Euler characteristic. In particular, the Euler characteristic exhibits no dependence on $t_1$ and $t_2$.

Our interest is in Euler characteristics over DT moduli spaces, whose components are not generally proper. Our aim in this section is to formulate Proposition \ref{genslopeind}, a statement in the spirit of (\ref{properind}) that holds for non-proper moduli spaces $\mathscr{M}$.

The approach is as follows. Using localization, we characterize the possible denominators of rational functions $\chi(\mathscr{M},\F\otimes\mathcal{O}^{\vir})$ in terms of the equivariant geometry of $\mathscr{M}$.

We then show that if the limit $\chi(\mathscr{M},\F\otimes\mathcal{O}^{\vir})^{\sigma}$ exists for all one-parameter subgroups $\sigma$ landing in a fixed subtorus of $\mathbb{T}$, then the value of this limit depends only on the attracting behavior, with respect to $\sigma$, of the weights appearing in the denominator of $\chi(\mathscr{M},\F\otimes\mathcal{O}^{\vir})$. 

When reading this section, it may be useful to keep in mind the eventual application. Namely, we will apply the results when $\mathscr{M}$ is a component $DT(X,\beta,n)$ of DT moduli space, the torus $\mathbb{T}$ is $\tilde{T}$, the double cover of the torus $T=\diag(t_1,t_2,t_3)$ acting on $X$ for which the square root of the character of $\mathcal{K}^{\vir}$ is defined, and the sheaf $\mathcal{F}$ is $(\mathcal{K}^{\vir})^{1/2}.$  As in Sections \ref{DTdef} and \ref{virtstruct}, set $\kappa=t_1t_2t_3$. In Section \ref{threefold}, we will be interested in limits $$\chi(DT(X,\beta,n),\F\otimes\mathcal{O}^{\vir})^{\sigma}$$ for one-parameter subgroups $\sigma:\C^{\times}\to \ker(\kappa)$.

\subsection{Denominators in localization}


Given a weight $w\in \mathbb{T}^{\vee}$, we let $\mathbb{T}_{w}\subset \mathbb{T}$ denote the maximal torus inside the subgroup $\ker(w)$ of $\mathbb{T}$. In particular, for nonzero $n\in \Z$ we have $\mathbb{T}_{w^n}=\mathbb{T}_{w}$.

\begin{definition}
A weight $w\in \mathbb{T}^{\vee}$ is said to be a \emph{compact} weight of $\mathscr{M}$ if the fixed locus $\mathscr{M}^{\mathbb{T}_{w}}$ is proper, and \emph{noncompact} otherwise.
\end{definition}

While many possible denominators can occur in any particular term of a localization expression for $\chi(\mathscr{M},\F\otimes\mathcal{O}^{\vir})$, the following result restricts the possible denominators that may appear in the total sum.

\begin{proposition}\label{denomchar}
The $\mathbb{T}$-character $\chi(\mathscr{M},\mathcal{F}\otimes\mathcal{O}^{\vir})$ can be written as a quotient of two Laurent polynomials $p(t)/q(t)$ such that $q(t)$ is of the form $\prod (1-w),$ where each $w$ is a noncompact weight of $\mathscr{M}$.
\end{proposition}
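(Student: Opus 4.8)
The plan is to run the virtual localization formula over $\mathscr{M}^{\mathbb{T}}$ and to control which denominators survive after summing the contributions of fixed points grouped by the faces of the moment-type fan they lie on. First I would apply equivariant localization: writing $\mathscr{M}^{\mathbb{T}}$ as a disjoint union of connected components $Z_\alpha$ (the isolated-point case being a sub-case), we get
\begin{align}\label{loc-for-denom}
\chi(\mathscr{M},\mathcal{F}\otimes\mathcal{O}^{\vir})=\sum_\alpha \chi\Big(Z_\alpha,\ \frac{\mathcal{F}|_{Z_\alpha}\otimes\mathcal{O}^{\vir}_{Z_\alpha}}{\Lambda^{\bullet}(N^{\vir}_\alpha)^{\vee}}\Big),
\end{align}
and each term lies in $\Q[t_k^{\pm}][1/(1-u)]$ where $u$ ranges over the $\mathbb{T}$-weights occurring in the virtual normal bundle $N^{\vir}_\alpha$. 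So a priori the denominator of the whole sum is a product of factors $1-u$ over all such $u$; the content of the Proposition is that only the \emph{noncompact} weights can actually persist.

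The key step is a cancellation argument organized by $1$-parameter degenerations. Fix a weight $w$ occurring as a ray in some $N^{\vir}_\alpha$, and suppose $w$ is compact, i.e. $\mathscr{M}^{\mathbb{T}_w}$ is proper. The idea is to re-localize in stages: first to $\mathscr{M}^{\mathbb{T}_w}$, which by properness and the proper-case identity (\ref{properind}) produces an Euler characteristic that is a \emph{Laurent polynomial} in the one remaining coordinate (the character of the rank-one quotient $\mathbb{T}/\mathbb{T}_w$), hence has no pole along $1-w^{\pm}$; then localize $\mathscr{M}^{\mathbb{T}_w}$ further down to $\mathscr{M}^{\mathbb{T}}$. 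Concretely, I would group the components $Z_\alpha$ according to which connected component of $\mathscr{M}^{\mathbb{T}_w}$ they lie in; for a fixed such component $W$, the partial sum $\sum_{Z_\alpha\subset W}\chi(Z_\alpha,\dots)$ equals $\chi(W,\mathcal{F}|_W\otimes\mathcal{O}^{\vir}_W/\Lambda^{\bullet}(N^{\vir}_{W})^{\vee})$ by localization applied to the perfect obstruction theory restricted to $W$. Now $N^{\vir}_W$ has no weights that are powers of $w$ (those directions are internal to $W$), so this partial sum has no $1-w^{\pm}$ in its denominator; summing over the finitely many $W$ shows the full character has no such pole. Running this over every compact weight $w$ appearing in any $N^{\vir}_\alpha$ gives the claim, with the surviving $q(t)$ a product of $(1-w)$ over noncompact weights only.

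A couple of points need care. One must check that the perfect obstruction theory on $\mathscr{M}$ restricts to a $\mathbb{T}$-equivariant perfect obstruction theory on each $\mathscr{M}^{\mathbb{T}_w}$ so that localization applies in stages and the virtual normal bundles split compatibly, $N^{\vir}_{\mathscr{M}/\mathscr{M}^{\mathbb{T}}}|_W = N^{\vir}_{\mathscr{M}/W} + N^{\vir}_{W/\mathscr{M}^{\mathbb{T}}}$ as $\mathbb{T}$-equivariant K-theory classes with the first summand carrying exactly the weights that are nontrivial on $\mathbb{T}_w$; this is the standard functoriality of virtual localization (the references \cite{GP, CFK, Q} and the exposition in \cite{Th2} suffice). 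One must also note that $\mathbb{T}_{w^n}=\mathbb{T}_w$, so it is enough to treat primitive $w$, and that properness of $\mathscr{M}^{\mathbb{T}_w}$ is exactly what makes the Euler characteristic over $W$ a Laurent polynomial in the transverse coordinate after invoking (\ref{properind}). The main obstacle, I expect, is precisely this bookkeeping of iterated localization and making sure the denominators are tracked correctly through each stage — the cancellation is ``obvious'' but the equivariant-geometry setup of restricting obstruction theories and splitting virtual normal bundles is where the real work lies. Once that is in place, matching coefficients and reading off $q(t)$ is routine.
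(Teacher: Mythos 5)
Your proof is correct and follows essentially the same route as the paper: for each compact weight $w$, localize with respect to $\mathbb{T}_w$, use properness of $\mathscr{M}^{\mathbb{T}_w}$ to justify the localization, and observe that no weight of $\mathcal{N}^{\vir}_{\mathscr{M}/\mathscr{M}^{\mathbb{T}_w}}$ is trivial on $\mathbb{T}_w$, hence no factor $(1-\zeta w)$ survives in the denominator. The paper simply applies $\mathbb{T}_w$-localization in a single step rather than regrouping the $\mathbb{T}$-fixed components of a prior full localization, which sidesteps the iterated-localization bookkeeping you identify as the main obstacle.
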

\begin{proof}
Write $\chi(\mathscr{M},\mathcal{F}\otimes\mathcal{O}^{\vir})$ as a quotient $f(t)=p(t)/q(t)$, where $q(t)=\prod (1-w)$ and each $w$ is a weight in $\mathcal{N}^{\vir}_{\mathscr{M}/\mathscr{M}^\mathbb{T}};$ if $p(t)$ is zero there is nothing to show. 

If $w$ is a compact weight, then equivariant localization with respect to $\mathbb{T}_{w}$ implies that as $\mathbb{T}_{w}$-characters, we have $$\chi(\mathscr{M},\F\otimes\mathcal{O}^{\vir}) \in \Q\Big[t_i^{\pm},\frac{1}{1-v}\Big],$$ where $v$ ranges over the $\mathbb{T}$-weights occuring in $\mathcal{N}^{\vir}_{\mathscr{M}/
\mathscr{M}^{\mathbb{T}_{w}}}$. No such $v$ vanishes on $\mathbb{T}_w$, so we conclude that $f(t)$ may be written in a form whose denominator is a product of terms of the form $(1-v)$ where no $v$ is a power of $w$. In particular, the rational function $f(t)$ has no poles at $1=\zeta w$ for any root of unity $\zeta$.
\end{proof}

For example, consider the Hilbert scheme $DT(\C^3,0,n)$ of $n$ points on $\C^3$ with the standard action of a three-dimensional torus $\mathrm{diag}(t_1,t_2,t_3)$. Then the fixed locus $${DT(\C^3,0,n)}^{\mathrm{diag}(t_1,t_2,t_3)_{w}}$$ is proper unless $w$ is a nontrivial power of some $t_k$, so the only noncompact directions are of the form $t_k^{i}$. Nekrasov's formula (\cite[Thm 3.3.6]{O}) furnishes an explicit formula for the K-theoretic DT partition function: \begin{align}\label{nek} \sum_{n=0}^{\infty} &(-Q)^n\chi(DT(\C^3,0,n),\tilde{\mathcal{O}}^{\vir})=\Sym^{\bullet} \bigg(\frac{-Q}{(1-Q\kappa^{1/2})(1-Q\kappa^{-1/2})}\prod_{k=1}^3 \frac{\kappa^{1/2}t_k^{-1}-\kappa^{-1/2}}{1-t_k^{-1}}\bigg)\end{align} (here, the argument of the right-hand side should be expanded in positive powers of $Q$). In particular, note that $\chi(DT(\C^3,0,n),\tilde{\mathcal{O}}^{\vir})$ can be written as a Laurent polynomial whose denominator is a product of terms of the form $(1-t_k^{m})$.

\subsection{Limit independence}\label{limind}

Let  $A\subset \mathbb{T}$ be a subtorus, and let $f(t) \in \C(t_k)$ be a rational function that can be written in the form $$\frac{p(t)}{\prod_i (1-w_i)}, $$ where $p(t)$ is a Laurent polynomial in $t_k$ and $w_i\in \mathbb{T}^{\vee}$ are nontrivial weights. Set $W=\{w_i\}$. Via the map $\mathbb{T}^{\vee}\to A^{\vee},$ the elements of $W$ can be regarded as $A$-weights.

We say that the rational function $f(t)$ is \emph{$A$-balanced} if $$\lim_{z\to 0} f(\sigma(z)t)$$ exists for all one-parameter subgroups $\sigma:\C^{\times}\to A$, or, equivalently (for nonzero $f$), if the set of $A$-weights appearing in the numerator of $f$ is the same as the set of $A$-weights appearing in the denominator of $f$. 

Given an $A$-balanced rational function $f$, the weights in $W$ impose a wall and chamber structure on the lattice of one-parameter subgroups $\C^{\times}\to A$. We show that the limit of $f$ under a one-parameter subgroup $\sigma:\C^{\times}\to A$ depends only on the chamber containing $\sigma$. 

To simplify the exposition, we restrict our focus to one-parameter subgroups lying in the interior of such a chamber: we say that a one-parameter subgroup $\sigma:\C^{\times}\to A$ is \emph{$f$-generic} if, for all $w_i\in W$, one has $w_i(\sigma(z)t)\not=w_i(t)$.  

Given an $f$-generic $\sigma$ and $w_i\in W$ we have $w_i(\sigma(z)t)=z^rw_i(t)$ for some $r\not=0$. We say that $w$ is \emph{attracting} with respect to $\sigma$ if $r>0$ and is \emph{repelling} with respect to $\sigma$ if $r<0$.

\begin{proposition}\label{genindependence}
Let $$f(t)=\frac{p(t)}{\prod_i (1-w_i)}$$ be $A$-balanced and let $\sigma_1$ and $\sigma_2$ be two $f$-generic one-parameter subgroups sharing the same attracting/repelling behavior for each weight $w_i$. Then $$\lim_{z\to 0} f(\sigma_1(z)t)=\lim_{z\to 0} f(\sigma_2(z)t).$$
\end{proposition}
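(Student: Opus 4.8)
The plan is to reduce the claim to the case of a single factor $1/(1-w_i)$ by partial fraction-like bookkeeping, and then to observe that the limit of each piece depends only on the sign data $r_i = \operatorname{sgn}(r)$ where $w_i(\sigma(z)t) = z^r w_i(t)$. First I would expand $p(t)$ into its monomial terms, so that $f(t)$ becomes a finite sum of rational functions of the form $u(t)/\prod_i(1-w_i)$ with $u$ a single Laurent monomial; since the limit operation $\lim_{z\to 0}(-)$ is additive on those finitely many summands whose individual limits exist, it suffices to treat one such summand — but one must be slightly careful, as an individual summand need not be $A$-balanced even though $f$ is. To handle this cleanly, I would instead argue directly on $f$ as follows.

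For a fixed $f$-generic $\sigma$, partition the index set $I = \{i\}$ into $I^+$ (attracting: $r_i > 0$) and $I^-$ (repelling: $r_i < 0$). Using $1 - w_i = -w_i(1 - w_i^{-1})$, rewrite
\begin{align*}
\frac{1}{\prod_i (1-w_i)} = \frac{(-1)^{|I^-|}\prod_{i\in I^-} w_i^{-1}}{\prod_{i\in I^+}(1-w_i)\cdot \prod_{i\in I^-}(1-w_i^{-1})},
\end{align*}
so that after absorbing the monomial prefactor into $p(t)$ we may assume every weight in the denominator is attracting for $\sigma$. Now substitute $t \mapsto \sigma(z)t$: each denominator factor becomes $1 - z^{r_i}w_i(t)$ with $r_i > 0$, which tends to $1$ as $z\to 0$; hence the limit exists iff the rewritten numerator $\tilde p(\sigma(z)t)$ has a limit, and that limit is obtained simply by discarding every monomial of $\tilde p$ on which $\sigma$ acts with negative weight, keeping those with positive weight sent to $0$, and keeping (unchanged) those on which $\sigma$ acts trivially — but $f$-genericity combined with $A$-balancedness forces the surviving constant terms to match up. Concretely, $\lim_{z\to 0} f(\sigma(z)t)$ equals the sum of the $\sigma$-weight-zero monomials of $\tilde p(t)$, divided by $1$, i.e. it is the "$\sigma$-invariant part" of $\tilde p$.

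The point is then that the rewriting above — which $I^-$ gets inverted, and hence what $\tilde p$ is — depends on $\sigma_1$ and $\sigma_2$ \emph{only through the attracting/repelling behavior of the $w_i$}, which by hypothesis is the same for both. Therefore $\tilde p$ is literally the same Laurent polynomial for $\sigma_1$ and $\sigma_2$. It remains to check that the $\sigma_1$-weight-zero part of $\tilde p$ equals the $\sigma_2$-weight-zero part: a priori these could differ since $\sigma_1 \ne \sigma_2$. This is where $A$-balancedness enters decisively. Since all denominator weights are now attracting for \emph{both} $\sigma_j$, and $f$ (equivalently $\tilde p / \prod(1-\tilde w_i)$) is $A$-balanced, the multiset of $A$-weights of $\tilde p$ must coincide with the multiset of $A$-weights of $\prod_i(1 - \tilde w_i)$; expanding the latter, every such weight is a sum of a sub-multiset of the attracting weights $\{r_i\}$-lifts, hence lies in the "nonnegative cone" they span and equals $0$ only for the empty sub-multiset — forcing the constant ($A$-weight $0$) coefficient of $\tilde p$ to be exactly the coefficient of $1$ in $\prod_i(1-\tilde w_i)$, namely a fixed constant independent of which generic $\sigma$ in this chamber we chose. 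Running this for $\sigma_1$ and for $\sigma_2$ gives the same value, completing the proof.

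\textbf{Main obstacle.} The delicate step is the last one: showing the limit is insensitive to the specific $\sigma$ within a chamber, not merely to the $I^{\pm}$ partition. The subtlety is that after the rewriting, $\tilde p$ may still contain monomials that are $\sigma_1$-trivial but not $\sigma_2$-trivial (or vice versa); the resolution is that $A$-balancedness pins down precisely the $A$-weight-zero coefficient of $\tilde p$ to a denominator-determined constant, and "$\sigma_j$-weight zero" for a generic $\sigma_j$ in the chamber is equivalent to "$A$-weight zero" because genericity means $\sigma_j$ separates all the finitely many relevant $A$-weights from $0$ except the genuine zero. Making that equivalence precise — that an $f$-generic $\sigma$ detects $A$-triviality of exactly the $A$-weight-$0$ monomials appearing in $\tilde p$ and $\prod(1-\tilde w_i)$ — is the technical heart, but it follows from the definition of $f$-genericity applied to the finite weight set $W$ together with the differences of weights occurring in $p$.
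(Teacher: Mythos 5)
Your proof is correct and follows essentially the same route as the paper's: multiply numerator and denominator by a Laurent monomial so that every $w_i$ is attracting for both slopes, observe that the denominator then tends to $1$, and use $A$-balancedness together with $f$-genericity to identify the limit of the numerator with its $A$-weight-zero part $p_0(t)$, which depends only on the chamber. One small caveat: your closing assertion that the $A$-weight-zero part of $\tilde p$ must equal the coefficient of $1$ in $\prod_i(1-\tilde w_i)$ is neither true (balancedness matches the \emph{sets} of $A$-weights, not coefficients, and $p_0(t)$ is in general a nonconstant Laurent polynomial in $t$) nor needed, since independence from the choice of $\sigma$ within the chamber already follows from the fact that the $A$-weight-zero part of $\tilde p$ makes no reference to $\sigma$.
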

\begin{proof}
Multiplying the numerator and denominator of $f$ by a Laurent monomial if necessary, we may assume that each $w_i$ is attracting with respect to both $\sigma_1$ and $\sigma_2$. Set $$q(t)=\prod_i (1-w_i);$$
then we may write $q(\sigma_1(z)t)= 1+ zq_1(t,z)$ where $q_1\in \C[t_k^{\pm},z].$ 

Via the map $\phi:\mathbb{T}^{\vee}\to A^{\vee}$, we regard the monomials appearing in the numerator and denominator of $f$ as $A$-weights. Write $p(t)=\sum c_ju_j$ where each $u_j$ is a monomial, and set $$p_0(t) = \Big\{\sum c_ju_j \ \big|\  \phi(u_j)=1\in A^{\vee}\Big \}.$$ 

The set of $A$-weights appearing in the numerator of $f$ is the same as the set of $A$-weights appearing in the denominator of $f$. Therefore we may write $$p(\sigma_1(z)t)=p_0(t)+zp_1(t,z)$$ where $p_1\in \C[t_k^{\pm},z]$ and $p_0$ is regarded as an element of $\C[t_k^{\pm},z]$ via the inclusion $\C[t_{k}^{\pm}]\hookrightarrow \C[t_k^{\pm},z]$.  It follows that $$\lim_{z\to 0} f(\sigma_1(z)t)=\frac{p_0(t)+zp_1(t,z)}{1+zq_1(t)}=p_0(t);$$ the same is true of $\sigma_2$.
\end{proof}

We remark that analogous results hold for one-parameter subgroups $\sigma$ lying on a wall; that is, for $\sigma$ such that $w_i(\sigma(z)t)=w_i(t)$ for some $i$.

\subsection{Limit independence for Euler characteristics}\label{limind2}

Again, let $A\subset \mathbb{T}$ be a subtorus. We say that a one-parameter subgroup $\sigma:\C^{\times}\to A$ is {\em $\mathscr{M}$-generic} if for any noncompact weight $w$ of $\mathscr{M}$, we have $w(\sigma(z)t)\not=w(t)$. Combining Propositions \ref{denomchar} and \ref{genindependence}, we obtain the following.

\begin{proposition}\label{genslopeind}
If $\chi(\mathscr{M},\F\otimes\mathcal{O}^{\vir})$ is $A$-balanced and $\sigma_1$ and $\sigma_2$ are two $\mathscr{M}$-generic one-parameter subgroups $\C^{\times}\to A$ with the same attracting/repelling behavior for each non-compact weight of $\mathscr{M}$, then $$\chi(\mathscr{M},\F\otimes\mathcal{O}^{\vir})^{\sigma_1}=\chi(\mathscr{M},\F\otimes\mathcal{O}^{\vir})^{\sigma_2}.$$
\end{proposition}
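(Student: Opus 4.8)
The plan is to feed the two preceding propositions into one another; the statement is really a repackaging of Propositions \ref{denomchar} and \ref{genindependence}. First I would invoke Proposition \ref{denomchar} to fix a presentation
$$\chi(\mathscr{M},\F\otimes\mathcal{O}^{\vir}) = \frac{p(t)}{q(t)}, \qquad q(t)=\prod_i (1-w_i),$$
in which every $w_i$ is a noncompact weight of $\mathscr{M}$. If $p(t)=0$ there is nothing to prove, so assume not. Set $f(t)=p(t)/q(t)$ and $W=\{w_i\}$, and regard the monomials appearing in $f$ as $A$-weights via $\mathbb{T}^{\vee}\to A^{\vee}$, exactly as in the setup of Proposition \ref{genindependence}.

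Next I would verify that the hypotheses of Proposition \ref{genindependence} hold for this $f$. The assumption that $\chi(\mathscr{M},\F\otimes\mathcal{O}^{\vir})$ is $A$-balanced is precisely the $A$-balancedness required of $f$. For $f$-genericity: by definition, $\sigma_j$ being $\mathscr{M}$-generic means $w(\sigma_j(z)t)\neq w(t)$ for every noncompact weight $w$ of $\mathscr{M}$; since each $w_i\in W$ is noncompact, this says exactly that $\sigma_1$ and $\sigma_2$ are $f$-generic. Finally, the hypothesis that $\sigma_1$ and $\sigma_2$ share the same attracting/repelling behavior for every noncompact weight of $\mathscr{M}$ specializes to the same behavior on each $w_i\in W$.

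With these checks in place, Proposition \ref{genindependence} applies verbatim and gives
$$\lim_{z\to 0} f(\sigma_1(z)t) = \lim_{z\to 0} f(\sigma_2(z)t),$$
which is the asserted equality $\chi(\mathscr{M},\F\otimes\mathcal{O}^{\vir})^{\sigma_1}=\chi(\mathscr{M},\F\otimes\mathcal{O}^{\vir})^{\sigma_2}$.

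The only point I would write out with any care — and it is not really an obstacle, since it is already handled inside Proposition \ref{denomchar} — is the bookkeeping that the chosen presentation $p(t)/q(t)$ has its denominator supported exactly on noncompact weights, so that $\mathscr{M}$-genericity genuinely implies $f$-genericity and no spurious cancellation reintroduces a pole along a compact weight. Granting that, the proof is immediate: this proposition is a formal consequence of the two earlier ones, so I expect no substantive difficulty.
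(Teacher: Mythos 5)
Your proposal is correct and matches the paper exactly: the paper derives Proposition \ref{genslopeind} by simply combining Propositions \ref{denomchar} and \ref{genindependence}, which is precisely the chain of checks you spell out. The extra care you take in verifying that $\mathscr{M}$-genericity implies $f$-genericity for the chosen presentation is a reasonable elaboration of what the paper leaves implicit.
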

 
Now, specialize $\mathscr{M}$ to be a component of DT moduli space, the torus $\mathbb{T}$ to be the double cover $\tilde{T}$ of the torus $T=\diag(t_1,t_2,t_3)$ as in the previous section, the sheaf $\mathcal{F}$ to be $(\mathcal{K}^{\vir})^{1/2}$ and $A$ to be the Calabi-Yau torus $\ker(\kappa),$ where $\kappa$ is the character of $(\K^{\vir})^{\vee}$. By (\ref{dtpar}), the character $\chi(DT(X,\beta,n),\tilde{\mathcal{O}}^{\vir})$ is $A$-balanced. We obtain Theorem \ref{independence}.

For example, let us explicitly verify the proposition for $DT(\C^3,0,1)$. Then \begin{align}\label{C3exp}\chi(DT(\C^3,0,1),\mathcal{O}^{\vir}\otimes( \mathcal{K}^{\vir})^{1/2} )=\prod_{k=1}^3 \frac{\kappa^{1/2}t_k^{-1}-\kappa^{-1/2}}{1-t_k^{-1}},\end{align} which is $A$-balanced. The non-compact weights of $DT(\C^3,0,1)$ are of the form $t^i_k$. So, a one-parameter subgroup $\sigma(r_1,r_2,r_3):\C^{\times}\to A$ given by $z\mapsto (z^{r_1},z^{r_2},z^{r_3})$ with $$r_1+r_2+r_3=0$$  is $\mathscr{M}$-generic if each of $r_1, r_2$ and $r_3$ are nonzero. Moreover, two one-parameter subgroups $\sigma(r_1,r_2,r_3)$ and $\sigma(s_1,s_2,s_3)$ share the same attracting/repelling behavior for each non-compact weight of $\mathscr{M}$ if the signs of $r_1,r_2$ and $r_3$ are the same as the signs of $s_1, s_2$ and $s_3$, respectively. 

By (\ref{C3exp}), we compute \begin{align}\chi(DT(\C^3,0,1),(\mathcal{K}^{\vir})^{1/2}\otimes \mathcal{O}^{\vir})^{\sigma(r_1,r_2,r_3)}&=\lim_{z\to 0}\Big(\prod_{k=1}^3 \frac{z^{-r_k}\kappa^{1/2}t_k^{-1}-\kappa^{-1/2}}{1-z^{-r_k}t_k^{-1}}\Big)\nonumber\\&=\begin{cases} -\kappa^{1/2}\ \text{if 2 of the}\ r_k\ \text{are positive} \\  -\kappa^{-1/2}\ \text{if 2 of the}\ r_k\ \text{are negative}. \label{indexex}\end{cases} \end{align}

So, the limit under $\sigma(r_1,r_2,r_3)$ only depends on the signs of the $r_k$.

\section{Preferred limits of the DT partition function}\label{secbuildingblocks}

Let $X$ be a smooth toric Calabi-Yau threefold. 

For this section, we let $T$ denote the torus ${\rm diag}(t_1,t_2,t_3)$ acting on $X$. Choose this action such that the anticanonical bundle $\mathcal{K}_X^{\vee}$ is scaled with weight $\kappa=t_1t_2t_3$.

We begin by recalling from \cite{MNOP1,NO,Ot} a procedure for writing the partition function
$Z_{DT}(X)$ and its reduced analog $Z'_{DT}(X)$ in terms of combinatorial expressions associated to the vertices and edges of the toric diagram $\Delta(X)$.

 Using results of \cite[Sec 8]{NO}, we explicitly compute the limits of these vertex and edge contributions under preferred slopes, and, as a result, deduce the precise relationship between the vertex contribution to $Z_{DT}(X)$ and the refined topological vertex of \cite{IKV}.


\subsection{K-theoretic building blocks}

\subsubsection{Torus-fixed locus}

The fixed locus $DT(X)^{T}$ consists of isolated points, with finitely many in any component $DT(X,\beta,n).$ We recall from \cite[4.1-4.2]{MNOP1} a combinatorial description of this fixed-point locus. 

The points of $X^T$ correspond to the vertices of the toric polytope $\Delta(X)$ and the $T$-fixed rational curves of $X$ correspond to the bounded edges of $\Delta(X)$. For $x_i\in X^T$, let $U_i$ be the toric chart centered at $x_i$.  Suppose that $x_i,x_j\in X^T$ are connected by a $T$-fixed rational curve $C_{ij}\subset X$; then $U_i\cap U_j$ is the total space of the direct sum of two line bundles $\mathcal{O}(l_{ij})\oplus \mathcal{O}(l'_{ij})$ over $C_{ij}.$ 

Choose coordinates on each $U_i$ such that the torus $T$ scales the coordinate directions by $T$-weights  $w_{i_1},w_{i_2},w_{i_3}$;  after fixing an orientation on the 1-skeleton of $\Delta(X)$, these coordinates can be compatibly ordered at each fixed point. The edges in the toric polytope emanating from the fixed point $x_i$ correspond to coordinate axes. Let $z_{i_1},z_{i_2},z_{i_3}$ denote the corresponding coordinate functions on $U_i$ so that $U_i={\rm Spec}\ \C[z_{i_1},z_{i_2},z_{i_3}]$. The torus $T$ scales these coordinate functions by $w_{i_1}^{-1},w_{i_2}^{-1},w_{i_3}^{-1}$, respectively. After cyclically permuting coordinates and exchanging $l_{ij}$ with $l'_{ij}$, if necessary, the weights of coordinate directions on $U_i, U_j$ are identified by \begin{align}\label{weightshift} w_{j_1}=w_{i_1}^{-1}, w_{j_2}=w_{i_1}^{-l'_{ij}}w_{i_3}, w_{j_3}=w_{i_1}^{-l_{ij}}w_{i_2};\end{align} see Figure \ref{genbun}. The convention is chosen so that the first coordinate corresponds to the edge of the polytope connecting the two vertices, and the order of the remaining coordinates is determined by the orientation on $\Delta(X)$. 

\begin{figure}[htb]
\centering{
\scalebox{.5}{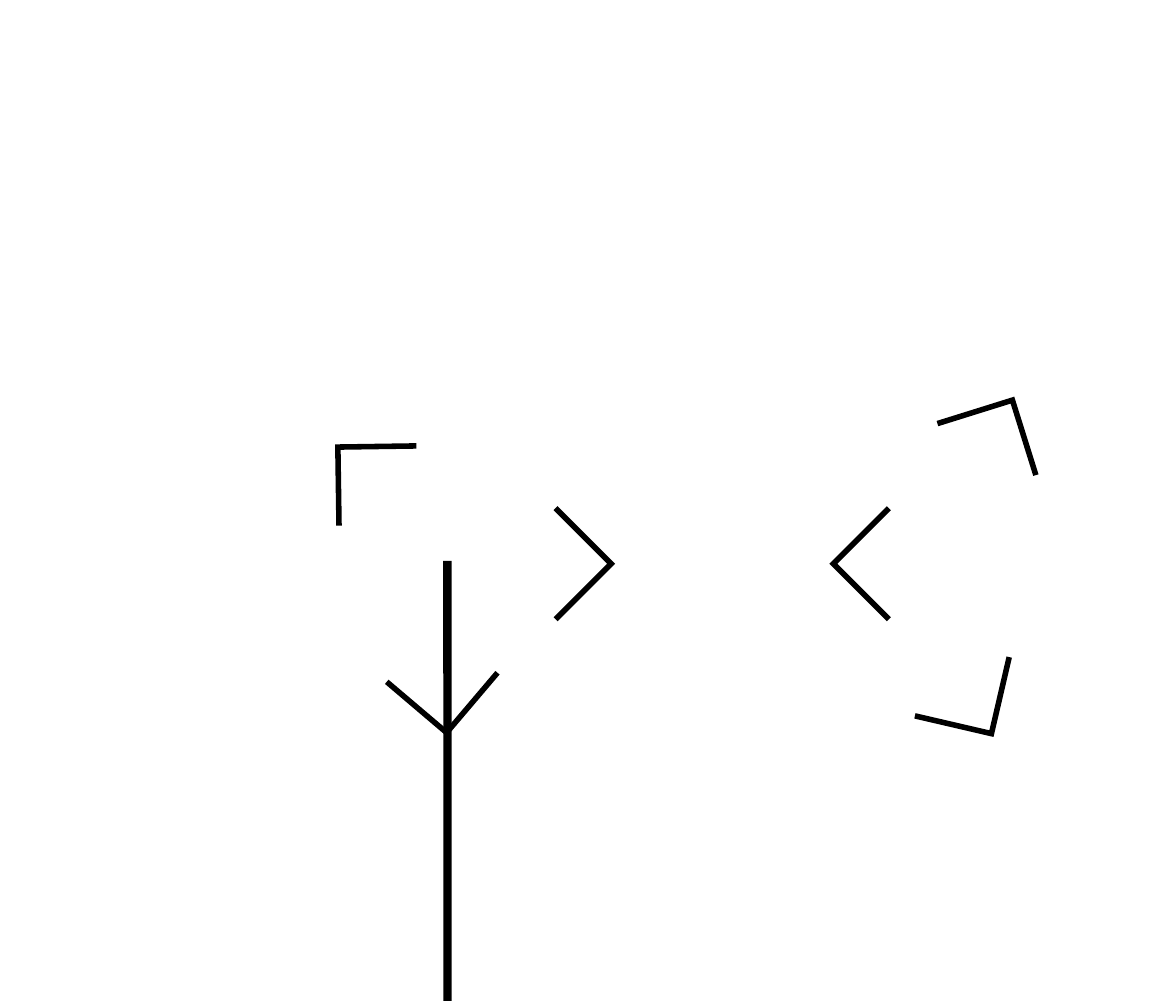}
\caption{The $T$-weights along a $T$-fixed rational curve with normal bundle $\mathcal{O}(\l)\oplus \mathcal{O}(l')$ } \label{genbun} 
}
\end{figure}

A $T$-fixed ideal sheaf $\mathcal{I}\in DT(X)$ is determined by a collection of $T$-fixed ideals $\{\mathcal{I}_i\subset \mathcal{O}_{U_i}\}$ whose restrictions $\mathcal{I}_i|_{U_{i}\cap U_j}$ are compatible, and glue to an ideal sheaf that cuts out a proper subscheme of $X$.

A $T$-fixed ideal $$\mathcal{I}_{i}\subset U_i$$ is given by a monomial ideal $$I_i\subset \C[z_{i_1},z_{i_2},z_{i_3}].$$ There is a one-to-one correspondence between monomial ideals and three-dimensional partitions $\pi_i$ given by $$I_i\leftrightarrow \pi_i=\{(b_1,b_2,b_3) | z_{i_1}^{b_1}z_{i_2}^{b_2}z_{i_3}^{b_3}\not\in I_i\};$$  we pass freely between an ideal and its associated partition.

The condition that $\mathcal{I}$ cuts out a proper subscheme restricts the possible asymptotics of the three-dimensional partitions $\pi_i$. The asymptotics of each such $\pi_i$ along each coordinate direction must be finite (as described in Section \ref{part}); more restrictively, the partition $\pi_i$ at a given fixed point $x_i$ must satisfy the following condition: \begin{align}\label{comp0} \pi_i^{(k)}\not=\emptyset\ {\rm only\ if\ the\ edge\ of}\ \Delta(X)\ {\rm in\ the\ direction\ of\ coordinate}\ z_{i_k}\ {\rm is\ bounded.} \end{align}

The asymptotics of the partitions must also agree in the direction of any $T$-fixed rational curve. To be precise, given $x_i, x_j\in X^T$ connected by a $T$-fixed rational curve $C_{ij}$, the restrictions $\mathcal{I}_i|_{U_{i}\cap U_j}$ and $\mathcal{I}_j|_{U_i\cap U_j}$ are compatible if the two localizations $$(I_i)_{z_{i_1}}\subset \C[z_{i_1}^{\pm 1},z_{i_2},z_{i_3}],\ (I_j)_{z_{i_1}^{-1}} \subset \C[z_{i_1}^{\pm 1}, z_{i_2}, z_{i_3}]$$ coincide. In terms of three-dimensional partitions, this compatibility translates to the condition that \begin{align}\label{comp1}\pi_{i}^{(1)}=\big(\pi_j^{(1)}\big)^t;\end{align} 
for example, in Figure \ref{comppair}, if the horizontal directions of $\pi_i$ and $\pi_j$ correspond to the coordinates $z_{i_1}$ and $z_{j_1}$, and the coordinates are oriented counter-clockwise,  we have ${\pi_{i}^{(1)}=(3,2)}$ and $\pi_{j}^{(1)}=(2,2,1)$.

 \begin{figure}[htb]
\centering{
\scalebox{.5}{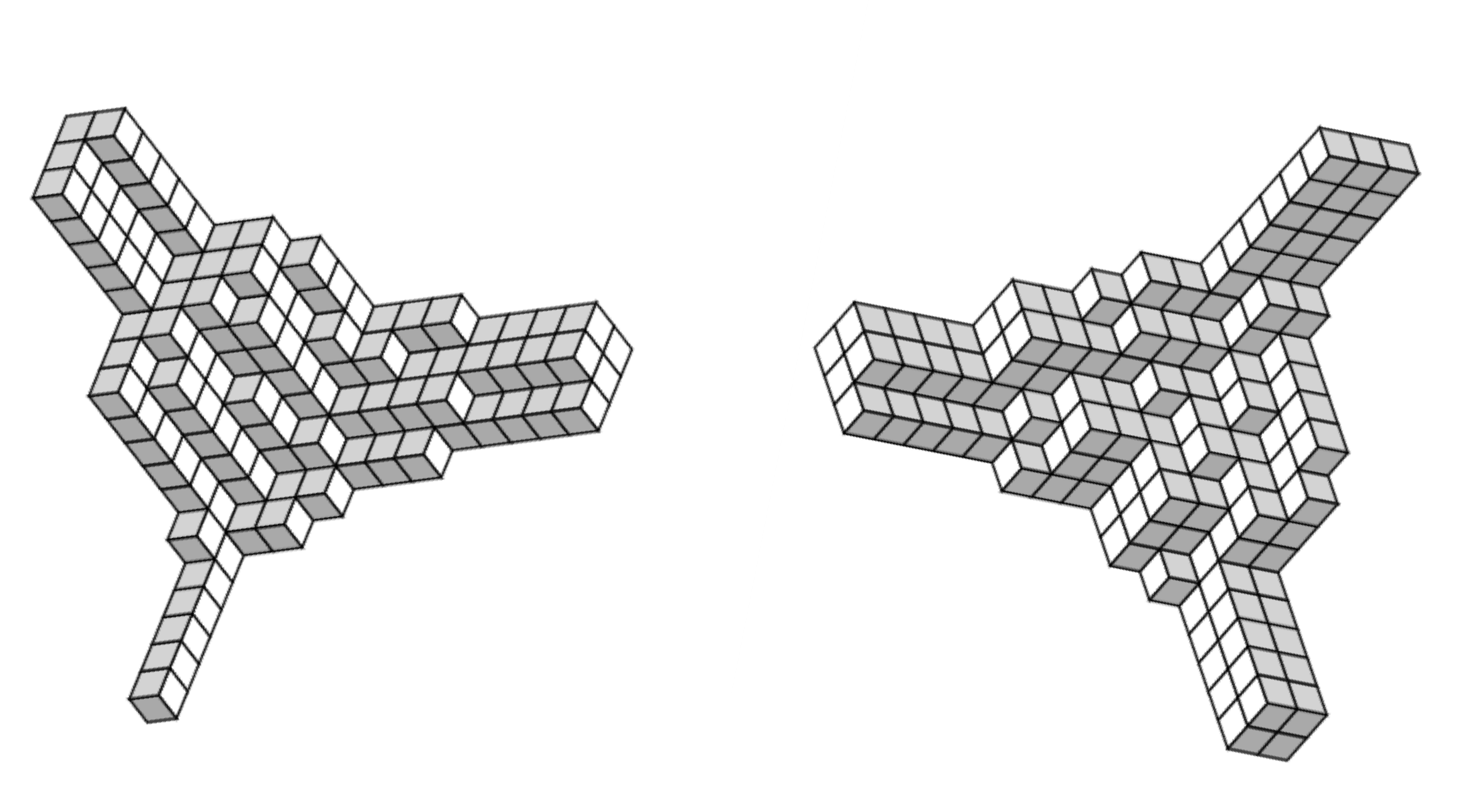}
\caption{Two compatible partitions $\pi_i$ and $\pi_j$ at vertices connected by an edge of $\Delta(X)$} \label{comppair}
}
\end{figure}

Conversely, any collection of three-dimensional partitions $\pi_i$ satisfying (\ref{comp0}) and (\ref{comp1}) corresponds to a $T$-fixed ideal sheaf $\mathcal{I}$. It follows that a point of $DT(X)^T$ is specified by a collection $\{\lambda_{ij}, \pi_i\},$ consisting of first, a choice of two-dimensional partition $\lambda_{ij}$ for each bounded edge of $\Delta(X)$, and then, a choice of three-dimensional partition $\pi_i$ for each vertex of $\Delta(X)$ whose asymptotics coincide with the chosen $\lambda_{ij}$; by convention we set $$\lambda_{ji}=\lambda_{ij}^t.$$

\subsubsection{Obstruction theory at fixed points}

Recall from \cite[Sec. 4.6]{MNOP1} that the virtual tangent space to $DT(X)$ at an ideal sheaf $\mathcal{I}$ is given by $${\mathcal{T}}^{\vir}_{{\mathcal{I}}}=\Ext^1(\mathcal{I},\mathcal{I})-\Ext^2(\mathcal{I},\mathcal{I}).$$

Recall that given a character $V$, the character of the dual representation is denoted $V^{\vee}$. In other words, $V^{\vee}$ is the character obtained from $V$ by inverting all torus variables. For example $$(2t_1^2+t^{-1}_1t_2)^{\vee}=2t_1^{-2}+t_1t_2^{-1}.$$

As $X$ is Calabi-Yau, the obstruction theory on $DT(X)$ is symmetric, so that \begin{align}\label{symobs} {T}^{\vir}_{{\mathcal{I}}}=-(T^{\vir}_{{\mathcal{I}}})^{\vee}\cdot \kappa.
\end{align} as $T$ characters.

The obstruction theory at a torus-fixed point ${\mathcal{I}}\in DT(X)^T$ can be calculated from the corresponding combinatorial data $\{\lambda_{ij},\pi_i\}.$ We recall this procedure from \cite[Sec. 4.7-4.9]{MNOP1}; see also \cite[Sec. 8.2]{NO} and \cite[Sec. 6.3]{Ot}.


Now suppose that $T={\rm diag}(t_1,t_2,t_3)$ acts on $\C^3={\rm Spec}\ \C[z_1,z_2,z_3]$ so $T$ scales the coordinate $z_k$ by $t_k^{-1}$. Given a partition $\pi$ corresponding to a $T$-fixed ideal sheaf $\mathcal{I}\subset \mathcal{O}_{\C^3}$, we set $O_{\pi}$ to be the $T$-character $\chi(\mathcal{O}_{\C^3}/\mathcal{I})$, that is $$O_{\pi} = \sum_{\square\in \pi} t_1^{-b_1}t_2^{-b_2}t_3^{-b_3}.$$ 

Then for $\pi$ with finitely many boxes we have \begin{align}\label{tangentterm}T^{\vir}_{\pi}(t_1,t_2,t_3)=O_{\pi}-{O^{\vee}_{\pi}}t_1t_2t_3-O_{\pi}O^{\vee}_{\pi}(1-t_1)(1-t_2)(1-t_3);
\end{align}
this is a Laurent polynomial in the variables $t_k$. For $\pi$ with infinitely many boxes, we first write the infinite sum $O_{\pi}$ as a rational function, and define the tangent space $T^{\vir}_{\pi}$ to be the \emph{rational function} in $t_k$ given by the same formula (\ref{tangentterm}). 

Then, for a $T$-fixed ideal sheaf $\mathcal{I}\in DT(X)^T$ given by $\{\lambda_{ij},\pi_i\}$ with weights $w_{i_1},w_{i_2},w_{i_3}$ as above, equivariant localization implies
\begin{align}\label{tanloc}T_{{\mathcal{I}}}^{\vir} = \sum_{i} T^{\vir}_{\pi_i}(w_{i_1},w_{i_2},w_{i_3}).
\end{align}

While each term on the right hand side of (\ref{tanloc}) is a rational function, the total sum is a Laurent polynomial in $t_k$. It is explained in \cite[4.9]{MNOP1} (see also \cite[8.2.2-8.2.3]{NO}, \cite[6.3.1]{Ot}) how to write $T_{{\mathcal{I}}}^{\vir}$ as a sum of Laurent polynomials by regularizing each term of (\ref{tanloc}) by contributions associated to the asymptotics of the corresponding partition $\pi_i$. Recalling (\ref{tanchar}), set \begin{align}\label{edgesub} T_{\lambda_{ij}}(w_{i_{k+1}},w_{i_{k+2}})=\sum_{\square\in\lambda_{ij}} w_{i_{k+1}}^{-l(\square)}w_{i_{k+2}}^{a(\square)+1}+w_{i_{k+1}}^{l(\square)+1}w_{i_{k+2}}^{-a(\square)}.\end{align} Then, if the $T$-fixed rational curve $C_{ij}$ corresponds to coordinate direction $z_{i_k}$, define \begin{align}\label{edgeterm} E_{\lambda_{ij},(l_{ij},l'_{ij})}(w_{i_1},w_{i_2},w_{i_3}) = \frac{T_{\lambda_{ij}}(w_{i_{k+1}},w_{i_{k+2}})}{1-w_{i_k}^{-1}}+\frac{T_{\lambda^t_{ij}}(w_{i_k}^{-l'_{ij}}\cdot w_{i_{k+2}},w_{i_k}^{-l_{ij}} \cdot w_{i_{k+1}})}{1-w_{i_k}};
\end{align}
here, for $k=1,2,3,$ we set $w_{i_{k+3}}=w_{i_{k}}.$ Using \ref{edgesub}, one can write the expression (\ref{edgeterm}) in terms of arm and leg lengths: 

When clear from the geometry of $X$, we omit the pair $(l_{ij},l'_{ij})$ from the notation for $E$.

We then define \begin{align}\label{vertexterm} V_{\pi_i}(w_{i_1},w_{i_2},w_{i_3}) = T^{\vir}_{\pi_i}(w_{i_1},w_{i_2},w_{i_3}) - \sum_{k=1}^3 \frac{T_{\pi_i^{(k)}}(w_{i_{k+1}},w_{i_{k+2}})}{1-w_{i_k}^{-1}}. \end{align} 
By \cite[Lemma 9]{MNOP1}, each $V_{\pi_i}$ and $E_{\lambda_{ij}}$ is a Laurent polynomial in $t_i$. From (\ref{tanloc}), we have \begin{align}\label{laurpol} T_{{\mathcal{I}}}^{\vir} = \sum_{i} V_{\pi_i}(w_{i_1},w_{i_2},w_{i_3})+\sum_{ij} E_{\lambda_{ij},(l_{ij},l'_{ij})}(w_{i_1},w_{i_2},w_{i_3}).
\end{align}
It follows from (\ref{tangentterm}, \ref{edgeterm}, \ref{vertexterm}) that each term of (\ref{laurpol}) enjoys the symmetry (\ref{symobs}); that is \begin{align}\label{termsym}
V_{\pi_i}(t_1,t_2,t_3)= -V_{\pi_i}(t_1,t_2,t_3)^{\vee}\cdot \kappa, \ E_{\lambda_{ij}}(t_1,t_2,t_3)=-E_{\lambda_{ij}}(t_1,t_2,t_3)^{\vee}\cdot \kappa. 
\end{align}

\subsubsection{Euler characteristic}
In the same manner as the obstruction theory, the value of $\chi(\mathcal{O}_Y)$ for $T$-fixed curves $Y$ can be computed in terms of edge and regularized vertex contributions using the \v{C}ech cover $\{U_i\}$; we recall the description from \cite[Lemma 5]{MNOP1}. 

 Set 
\begin{align}\label{vertexeul1} \chi(\pi)=\sum_{\square\in \pi} \bigg(1-\ \Big({\rm number\ of\ legs }\ \{ \pi\langle 1\rangle,  \pi\langle 2\rangle, \pi\langle 3 \rangle\}\ {\rm that\ contain}\ \square\Big)\bigg); \end{align}
this is the renormalized volume of $\pi$ as defined in \cite[4.4]{MNOP1}. Only finitely many boxes of $\pi$ contribute a nonzero value to the sum (\ref{vertexeul1}).

For a two-dimensional partition $\lambda$ and integers $(l,l')$, set \begin{align}\label{edgeeul} \chi(\lambda,(l,l'))= \sum_{\square\in\lambda} 1-lb_1-l'b_2.
\end{align}
Then, for ${\mathcal{I}}\in DT(X)^T$ specified by partition data $\{\lambda_{ij},\pi_i\}$ we have \begin{align}\label{toteul} \chi(\mathcal{O}/\mathcal{I})=\sum_{i} \chi(\pi_i) + \sum_{i,j} \chi(\lambda_{ij},(l_{ij},l'_{ij})).\end{align}

\subsection{Factorization of components of the partition function}\label{vertexsection}
We rewrite (\ref{dtpar}) as \begin{align}\label{dtpar2} Z_{DT}(X)=\sum_{{\mathcal{I}}\in DT(X)^T}  Q^{\chi(\mathcal{O}/\mathcal{I})}u^{[\mathcal{O}/\mathcal{I}]}\hat{a}(T_{{\mathcal{I}}}^{\vir}).\end{align} Recall that $[\mathcal{O}/\mathcal{I}]\in H_2(X,\Z)^{\mathrm{eff}}$ is the class of the one-dimensional components of $\mathcal{O}/\mathcal{I}$ weighted by their multiplicities.

We now write $Z_{DT}(X)$ in terms of expressions associated to the vertices and edges of $\Delta(X)$. Let ${\mathcal{I}}\in DT(X)^T$ be given by the configuration $\{\lambda_{ij},\pi_i\}$. Then, by (\ref{laurpol}), we have

$$\hat{a}(T_{{\mathcal{I}}}^{\vir})=\prod_{i} \hat{a}\Big(V_{\pi_i}(w_{i_1},w_{i_2},w_{i_3})\Big)\prod_{ij} \hat{a}\Big(E_{\lambda_{ij},(l_{ij},l'_{ij})}(w_{i_1},w_{i_2},w_{i_3})\Big),$$
while $$Q^{\chi(\mathcal{O}/\mathcal{I})}=\prod_{i}Q^{\chi(\pi_i)}\prod_{ij} Q^{\chi(\lambda_{ij},(l_{ij},l'_{ij}))},\ \ u^{[\mathcal{O}/\mathcal{I}]}=\prod_{ij} u^{|\lambda_{ij}|[C_{ij}]}.$$

Given two-dimensional partitions $\lambda_1,\lambda_2,\lambda_3$, set $P(\lambda_1,\lambda_2,\lambda_3)$ to be the set of three-dimensional partitions $\pi$ whose asymptotics are given by $\lambda_1,\lambda_2,\lambda_3$; that is $$P(\lambda_1,\lambda_2,\lambda_3)=\Big\{ \pi\ \mathrm{a\ three\textendash dimensional\ partition}\ |\ \pi^{(k)}=\lambda_k,\ k\in \{1,2,3\}\Big \}.$$ Then, set $$\hat{E}(\lambda,(l,l'))(w_{i_1},w_{i_2},w_{i_3})=Q^{\chi(\lambda,(l,l'))}\hat{a}\Big(E_{\lambda,(l,l')}(w_{i_1},w_{i_2},w_{i_3})\Big),$$ and $$\hat{V}(\lambda,\mu,\nu)(w_{i_1},w_{i_2},w_{i_3})=\sum_{\pi \in P(\lambda,\mu,\nu)} Q^{\chi(\pi)}\hat{a}\Big(V_{\pi}(w_{i_1},w_{i_2},w_{i_3})\Big).$$

We may then write $Z_{DT}(X)$ as \begin{align}\label{fpart} \sum_{\{\lambda_{ij}\}} \bigg( \prod_{ij} u^{|\lambda_{ij}|[C_{ij}]}\hat{E}(\lambda_{ij},(l_{ij},l'_{ij}))(w_{i_1},w_{i_2},w_{i_3}) \prod_{i}  \hat{V}(\lambda_{i1},\lambda_{i2},\lambda_{i3})(w_{i_1},w_{i_2},w_{i_3}) \bigg),\end{align} where the sum is over all assignments of two-dimensional partitions to the bounded edges of $\Delta(X)$.

The expression $\hat{V}(\lambda,\mu,\nu)(t_1,t_2,t_3)$ is called the {\em K-theoretic equivariant vertex}. 

\subsubsection{Reduced partition function}
The reduced K-theoretic partition function admits a similar expression. From (\ref{fpart}), we see $$Z_{DT}(X)|_{u=0}=\prod_{i}\hat{V}(\emptyset,\emptyset,\emptyset)(w_{i_1},w_{i_2},w_{i_3}),$$ so that we may write $Z'_{DT}(X)$ as \begin{align} \label{redpart} \sum_{\{\lambda_{ij}\}} \bigg( \prod_{ij} u^{|\lambda_{ij}|[C_{ij}]}\hat{E}(\lambda_{ij},(l_{ij},l'_{ij}))(w_{i_1},w_{i_2},w_{i_3}) \prod_{i}  \frac{\hat{V}(\lambda_{i1},\lambda_{i2},\lambda_{i3})(w_{i_1},w_{i_2},w_{i_3})}{\hat{V}(\emptyset,\emptyset,\emptyset)(w_{i_1},w_{i_2},w_{i_3})} \bigg).
\end{align}

The expression $\hat{V}(\emptyset,\emptyset,\emptyset)$ is completely characterized by Nekrasov's formula (\ref{nek}), so that the difference between $Z_{DT}(X)$ and $Z'_{DT}(X)$ is an  easily understood prefactor; however, our formulas will be more concise when formulated in terms of $Z'_{DT}(X)$. Consequently, we set \begin{align*} \hat{V}'(\lambda_{i1},\lambda_{i2},\lambda_{i3})(w_{i_1},w_{i_2},w_{i_3})=\frac{\hat{V}(\lambda_{i1},\lambda_{i2},\lambda_{i3})(w_{i_1},w_{i_2},w_{i_3})}{\hat{V}(\emptyset,\emptyset,\emptyset)(w_{i_1},w_{i_2},w_{i_3})}
\end{align*} to be the vertex contribution normalized by the contribution of the Hilbert scheme of points.

\subsection{Preferred limits of vertex and edge contributions}
We analyze the behavior of certain edge contributions $\hat{E}(\lambda,(l,l'))$ and vertex contributions $\hat{V}(\lambda,\mu,\nu)$ under certain one-parameter subgroups. 

\subsubsection{Rigidity}

The symmetry (\ref{termsym}) enjoyed by the vertex and edge contributions ensures that each contribution behaves well under limits of generic one-parameter subgroups. Let $A\subset T$ be the Calabi Yau subtorus $\ker(\kappa).$ Recall that such one-parameter subgroups $\sigma:\C^{\times}\to A$ are called {\em slopes} (this name is used as $A$ is two-dimensional.)

It is explained in \cite{MNOP1} that no weight appearing in $T_{{\mathcal{I}}}^{\vir}$ for ${\mathcal{I}}\in DT(X)^T$ is a power of $\kappa$. Let $w$ be a weight appearing in $\Def^{\vir}$, and $\sigma:\C^{\times}\to A$ be a slope such that $w(\sigma(z)t)=z^rw(t)$ where $r\not=0$. Then, we have \begin{align}\label{rigidity} \lim_{z\to 0} \frac{ (\frac{\kappa}{w(\sigma(z)t)})^{1/2}-(\frac{\kappa}{w(\sigma(z)t)})^{-1/2}}{(w(\sigma(z)t))^{1/2}-(w(\sigma(z)t))^{-1/2}} & =\lim_{z\to 0} \frac{z^{-r/2} (\frac{\kappa}{w})^{1/2}-z^{r/2} (\frac{\kappa}{w})^{-1/2}}{z^{r/2}w^{1/2}-z^{-r/2}w^{-1/2}} \nonumber \\ &= \begin{cases} -\kappa^{1/2} & \text{if}\ r>0 \\ -\kappa^{-1/2} & \text{if}\ r<0
\end{cases}
\end{align}

Given a virtual $T$-character $V$ such that $$V=-V^{\vee}\cdot \kappa,$$ we may write $$V=\sum_{i} u_i-v_i,$$ where each $u_i$ and $v_i$ are $T$-weights and $u_iv_i=\kappa.$ Recall that a slope $\sigma:\C^{\times}\to A$ is said to be $V$-generic if $u_i(\sigma(z)t)\not= u_i(t)$ for all $u_i$; as $u_iv_i=\kappa$, it immediately follows that $v_i(\sigma(z)t)\not= v_i(t).$

Now, from (\ref{toteul}), we see that for any fixed $\beta$, the fixed point set $DT(X,\beta,n)^{{T}}$ is finite for any fixed $n$ and empty for $n\ll 0$. From (\ref{rigidity}) we conclude that for generic $\sigma:\C^{\times}\to A$, we have \begin{align}\label{livesinG} \sum_{n} Q^n\chi(DT(X,\beta,n),\tilde{\mathcal{O}}^{\vir})^{\sigma} \in \Gamma,\end{align}

where $$\Gamma\subset \Q[[Q^{\pm 1}, \kappa^{\pm 1/2}]]$$ is the integral domain \begin{align}\label{Gdef}\Big\{ \sum_{i,j\in \Z} c_{i,j} Q^i\kappa^{j/2} \ \Big | \begin{array}{l}  c_{i,j}\in \Q, \\  \text{there exists}\ N\ \text{such that}\ c_{i,j}=0\ \text{whenever}\ i<N ,\\  \text{and for any fixed}\ i \ \text{only finitely many}\ c_{i,j}\ \text{are nonzero}  \end{array}\Big\}. \end{align}

We now recall the following definition from \cite[7.3.1]{NO}.

\begin{definition}\label{defindex}
Given a virtual $T$-character $$V=\sum_i u_i-v_i$$ with $u_iv_i=\kappa$, and a $V$-generic slope $\sigma$, define $\ind_{\sigma}(V)\in \Z$ to be the number of weights of $u_i$ that are attracting with respect to $\sigma$ (in the sense of Section \ref{limind}) minus the number of $u_i$ that are repelling with respect to $\sigma$. 
\end{definition}

Given $(r_1,r_2,r_3)\in \Z$ such that $r_1+r_2+r_3=0$, we let $\sigma(r_1,r_2,r_3):\C^{\times}\to A$ denote the slope $$z\mapsto (z^{r_1},z^{r_2},z^{r_3}).$$

For an example of the index, let $\pi$ denote the three-dimensional partition consisting of a single box. Then $$T_{\pi}^{\vir}(t_1,t_2,t_3)=t_1+t_2+t_3-t_1t_2-t_1t_3-t_2t_3,$$ so that $\sigma(r_1,r_2,r_3)$ is $T_{\pi}^{\vir}(t_1,t_2,t_3)$-generic if all $r_k$ are nonzero. For such $\sigma(r_1,r_2,r_3)$, we have\begin{align} \ind_{\sigma}(T_{\pi}^{\vir}(t_1,t_2,t_3))=\begin{cases} 2-1=1\ \ \ \ \ \text{if 2 of the}\ r_k\ \text{are positive} \\ 1-2=-1\ \ \ \text{if 2 of the}\ r_k\ \text{are negative}. \end{cases} \end{align} Note the similarity with (\ref{indexex}).

It follows from (\ref{rigidity}) that, for $V$ as in Definition \ref{defindex}, we have $$(\hat{V})^{\sigma}=(-\kappa^{1/2})^{\ind_{\sigma}(V)}.$$

 Set $$t=-Q\kappa^{1/2}, q=-Q\kappa^{-1/2}.$$  Under this change of variables, we have $$\Gamma=\Big\{\sum_{\substack{i,j\in 1/2\cdot \Z \\ i+j \in \Z }}c_{i,j} t^iq^j \Big| \begin{array}{l} c_{i,j}\in \Q \\ \text{there exists}\ N\ \text{such that}\ c_{i,j}=0\ \text{whenever}\ i+j<N \\  \text{and for any fixed}\ i\ \text{only finitely many}\ c_{i-j,j}\ \text{are nonzero} \end{array} \Big\}.$$ We separately compute the limits $E^{\sigma}_{\lambda_{ij},(l,l')}$ (which are monomials in $q, t, \sqrt{qt}$) and $V^{\sigma}_{\pi}\in \Gamma$. The latter limits do not seem to admit a concise expression in terms of the partition data for arbitrary slopes $\sigma$. However, as observed in \cite[Sec. 8]{NO}, when one of the exponents $r_k$ has very small magnitude compared to the others, these limits simplify considerably. Such $\sigma$ are called {\em preferred slopes}.

\subsubsection{Edge limits}

For the two Calabi-Yau threefolds $X$ we consider in Section \ref{threefold} (and for essentially any $X$ for which the methods of this paper can produce nontrivial identities), the possible normal bundles to a $T$-fixed rational curve $C_{ij}\subset X$ are $\mathcal{O}(-1)\oplus \mathcal{O}(-1)$ and $\mathcal{O}\oplus \mathcal{O}(-2)$. We compute the limits of the edge terms $\hat{E}_{\lambda}$ associated to these two configurations. 

We first consider a $T$-fixed rational curve whose normal bundle in $X$ is $\mathcal{O}(-1)\oplus\mathcal{O}(-1)$, as depicted in Figure \ref{minus1}.

 \begin{figure}[htb]
\centering{
\scalebox{.4}{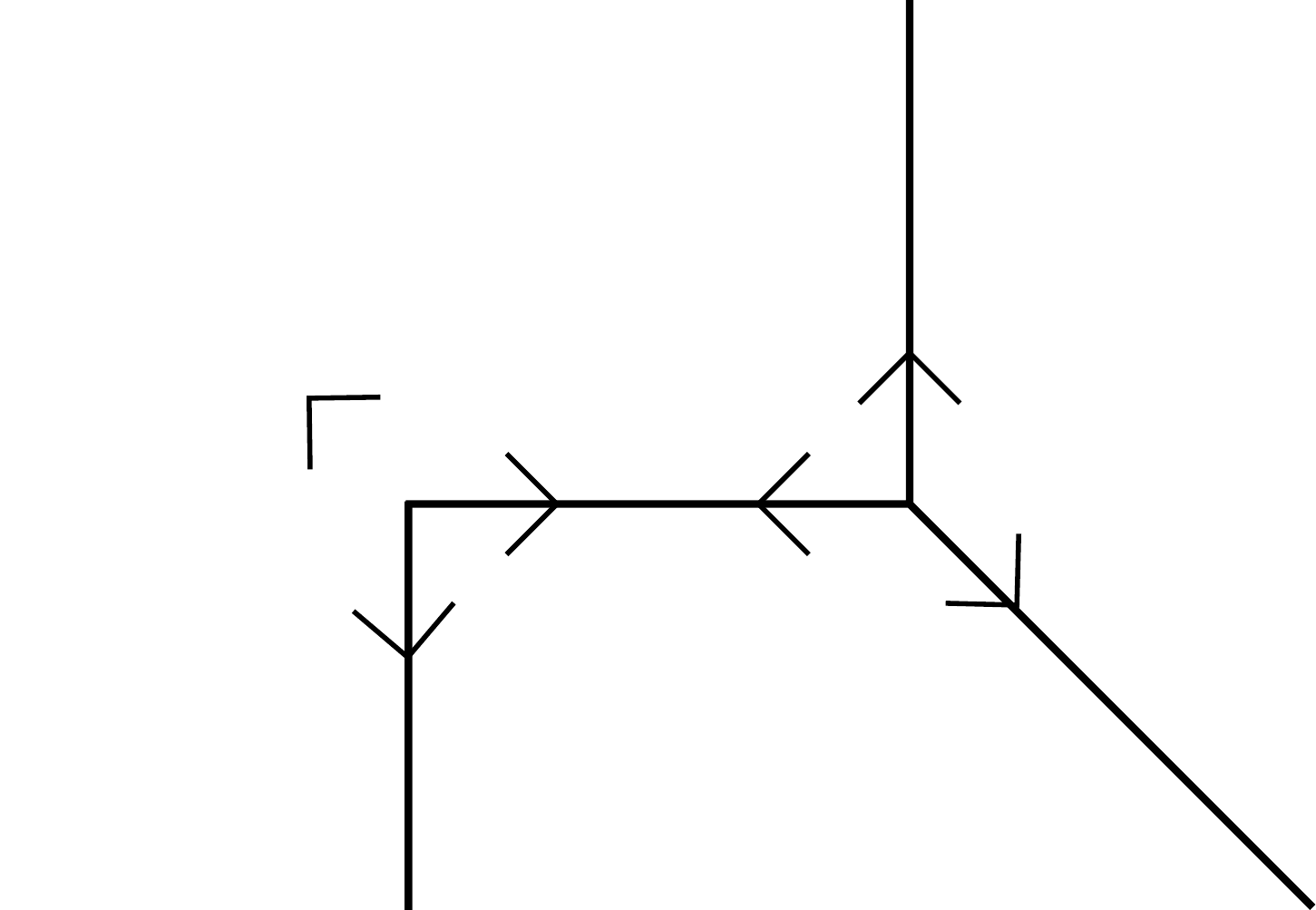}
\caption{The weights along a $T$-fixed rational curve with normal bundle $\mathcal{O}(-1)\oplus \mathcal{O}(-1)$} \label{minus1}
}
\end{figure}

By (\ref{edgesub}), we compute \begin{align*} E_{\lambda,(-1,-1)}(t_1,t_2,t_3)&= \frac{T_{\lambda}(t_2,t_3)}{1-t_1^{-1}}+\frac{T_{\lambda^t}(t_1t_3,t_1t_2)}{1-t_1}
\nonumber \\&= \sum_{\square\in\lambda} \frac{1-t_1^{-l(\square)+a(\square)}}{1-t_1^{-1}}t_2^{-l(\square)}t_3^{a(\square)+1}+\frac{1-t_1^{l(\square)-a(\square)}}{1-t_1^{-1}}t_2^{l(\square)+1}t_3^{-a(\square)} \nonumber 
\\& = \sum_{\square\in\lambda\ |\  a(\square)< l(\square)}  (1+\cdots+t_1^{-l(\square)+a(\square)+1})t_2^{-l(\square)}t_3^{a(\square)+1} \nonumber \\&\ \ \ +\sum_{\square\in\lambda \ |\  a(\square)< l(\square)} - (t_1+\cdots+t_1^{l(\square)-a(\square)})t_2^{l(\square)+1}t_3^{-a(\square)} 
\nonumber \\&\ \ \ +\sum_{\square\in\lambda \ |\  a(\square)>l(\square)} -(t_1+\cdots+t_1^{-l(\square)+a(\square)})t_2^{-l(\square)}t_3^{a(\square)+1} \nonumber \\ & \ \ \ +\sum_{\square\in\lambda \ |\  a(\square)>l(\square)} (1+\cdots+t_1^{l(\square)-a(\square)+1})t_2^{l(\square)+1}t_3^{-a(\square)} .
\end{align*}

Using this explicit expression, we compute $\ind_{\sigma}(E_{\lambda,(-1,-1)}(t_1,t_2,t_3))$ for preferred $\sigma$. Recall from Definition \ref{defindex} that this index is equal to the number of weights among \begin{align}\label{indwts}&\sum_{\square\in\lambda\ |\  a(\square)< l(\square)} (1+\cdots+t_1^{-l(\square)+a(\square)+1})t_2^{-l(\square)}t_3^{a(\square)+1}\nonumber \\&+\sum_{\square\in\lambda \ |\  a(\square)>l(\square)} (1+\cdots+t_1^{l(\square)-a(\square)+1})t_2^{l(\square)+1}t_3^{-a(\square)}\end{align} that are attracting with respect to $\sigma$ minus the number that are repelling with respect to $\sigma$.

For example, suppose that $\sigma=\sigma(r_1,r_2,r_3)$ with $r_1\gg r_2>0\gg r_3.$ For this choice of $\sigma$, the weights of the first summand of (\ref{indwts}) are repelling, while those of the second summand are attracting. So, the weights of the summand in (\ref{indwts}) associated to some $\square\in \lambda$ contribute $l(\square)-a(\square)$ to the index, hence \begin{align*}\ind_{\sigma(r_1,r_2,r_3)}(E_{\lambda,(-1,1)}(t_1,t_2,t_3))=\sum_{\square\in\lambda}\big( l(\square)-a(\square)\big)&=\frac{||\lambda^t||^2-|\lambda|}{2}-\frac{||\lambda||^2-|\lambda|}{2}\\&=\frac{||\lambda^t||^2-||\lambda||^2}{2}.\end{align*}

The indices associated to any preferred slope $\sigma$ can be similarly computed. We conclude the following.
$$\begin{tabu}{c|c}
\sigma=\sigma(r_1,r_2,r_3) & \ind_{\sigma}(E_{\lambda,(-1,-1)}(t_1,t_2,t_3)) \\
\hline
r_1\gg r_2>0\gg r_3 & (||\lambda^t||^2-||\lambda||^2)/2\\
\hline
r_2\gg r_1>0\gg r_3 &(||\lambda^t||^2-||\lambda||^2)/2 \\
\hline
r_1\gg r_3> 0 \gg  r_2 & (||\lambda||^2-||\lambda^t||^2)/2 \\
\hline
r_3\gg r_1> 0 \gg  r_2 &  (||\lambda||^2-||\lambda^t||^2)/2  \\
\hline
r_2 \gg  r_3 > 0 \gg  r_1 &  (||\lambda^t||^2-||\lambda||^2)/2 \\
\hline
r_3 \gg  r_2 > 0 \gg  r_1 &  (||\lambda||^2-||\lambda^t||^2)/2 
\end{tabu}$$

The indices for remaining preferred $\sigma$ are determined by the relation $$\ind_{\sigma(-r_1,-r_2,-r_3)}(V)=-\ind_{\sigma(r_1,r_2,r_3)}(V).$$ By (\ref{edgeeul}), we have \begin{align} \chi(\lambda,(-1,-1))=\sum_{(b_1,b_2)\in \lambda} \big(1+b_1+b_2\big)&=|\lambda|+\frac{||\lambda^t||^2-|\lambda|}{2}+\frac{||\lambda||^2-|\lambda|}{2}\nonumber\\&=\frac{||\lambda^t||^2+||\lambda||^2}{2}\label{subdef}.\end{align} We conclude the following.
\begin{proposition}
The limits of the edge contribution $\hat{E}(\lambda,(-1,-1))$ under preferred slopes $\sigma$ are as follows.
$$\begin{tabu}{c|c}\label{edge1}
\sigma=\sigma(r_1,r_2,r_3) & \hat{E}(\lambda,(-1,-1))(t_1,t_2,t_3)^{\sigma} \\
\hline 
r_1\gg r_2>0\gg r_3 & t^{\frac{||\lambda^t||^2}{2}}q^{\frac{||\lambda||^2}{2}}\\
\hline
r_2\gg r_1>0\gg r_3 & t^{\frac{||\lambda^t||^2}{2}}q^{\frac{||\lambda||^2}{2}} \\
\hline
r_1\gg r_3> 0 \gg  r_2 & t^{\frac{||\lambda||^2}{2}}q^{\frac{||\lambda^t||^2}{2}}\\
\hline
r_3\gg r_1> 0 \gg  r_2 & t^{\frac{||\lambda||^2}{2}}q^{\frac{||\lambda^t||^2}{2}}\\
\hline
r_2 \gg  r_3 > 0 \gg  r_1 &  t^{\frac{||\lambda^t||^2}{2}}q^{\frac{||\lambda||^2}{2}}\\
\hline
r_3 \gg  r_2 > 0 \gg  r_1 &  t^{\frac{||\lambda||^2}{2}}q^{\frac{||\lambda^t||^2}{2}}
\end{tabu}$$
\end{proposition}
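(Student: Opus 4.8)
The plan is to assemble the three computations prepared in this subsection: the explicit form of $E_{\lambda,(-1,-1)}$, the rigidity limit (\ref{rigidity}), and the resulting index dictionary. By definition $\hat{E}(\lambda,(-1,-1))(t_1,t_2,t_3) = Q^{\chi(\lambda,(-1,-1))}\,\hat{a}\big(E_{\lambda,(-1,-1)}(t_1,t_2,t_3)\big)$, so I would compute the $\sigma$-limit of each factor separately and then pass to the variables $t,q$. The exponent $\chi(\lambda,(-1,-1))$ does not depend on $\sigma$ and, by (\ref{subdef}) — namely (\ref{edgeeul}) with $l=l'=-1$, together with $\sum_{(b_1,b_2)\in\lambda} b_1 = (||\lambda^t||^2-|\lambda|)/2$ and $\sum_{(b_1,b_2)\in\lambda} b_2 = (||\lambda||^2-|\lambda|)/2$ — equals $(||\lambda^t||^2+||\lambda||^2)/2$.

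For the second factor, note that $E_{\lambda,(-1,-1)}$ satisfies the duality (\ref{termsym}), hence can be written as $\sum_i (u_i - v_i)$ with $u_iv_i=\kappa$; reading off the positive part of the four-line expansion of $E_{\lambda,(-1,-1)}$ displayed above, the deformation weights $u_i$ are precisely the monomials appearing in (\ref{indwts}), grouped by the boxes with $a(\square)<l(\square)$ and with $a(\square)>l(\square)$. Since no weight of $T^{\vir}_{\mathcal{I}}$ is a power of $\kappa$, none of the $u_i$ is either, so (\ref{rigidity}) gives $\hat{a}(E_{\lambda,(-1,-1)})^{\sigma} = (-\kappa^{1/2})^{\ind_{\sigma}(E_{\lambda,(-1,-1)})}$ for every $E_{\lambda,(-1,-1)}$-generic slope $\sigma$.

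The step I expect to be the main obstacle is computing $\ind_{\sigma}(E_{\lambda,(-1,-1)})$ for each of the six preferred slopes. For a preferred slope $\sigma(r_1,r_2,r_3)$ one of $r_1,r_2,r_3$ is negligible against the other two while $r_1+r_2+r_3=0$, so for each monomial $t_1^{e_1}t_2^{e_2}t_3^{e_3}$ in (\ref{indwts}) the sign of the $z$-exponent $e_1r_1+e_2r_2+e_3r_3$ — equivalently whether that $u_i$ is attracting or repelling — is decided by the two dominant terms, and in particular is nonzero, so each preferred slope is automatically $E_{\lambda,(-1,-1)}$-generic. I would run this sign count for all six preferred slopes (the case $r_1\gg r_2>0\gg r_3$ is carried out in full before the statement, and by $\ind_{\sigma(-r_1,-r_2,-r_3)}(V)=-\ind_{\sigma(r_1,r_2,r_3)}(V)$ only three genuinely new computations remain), then sum the per-box contributions using $\sum_{\square}a(\square)=(||\lambda||^2-|\lambda|)/2$ and $\sum_{\square}l(\square)=(||\lambda^t||^2-|\lambda|)/2$; this produces $\ind_\sigma(E_{\lambda,(-1,-1)}) = \pm(||\lambda^t||^2-||\lambda||^2)/2$ according to the index table above. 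The difficulty is purely in keeping this attracting/repelling bookkeeping consistent across all six directions at once.

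Finally, combining the two factors gives $\hat{E}(\lambda,(-1,-1))^{\sigma} = Q^{(||\lambda^t||^2+||\lambda||^2)/2}(-\kappa^{1/2})^{\ind_\sigma(E_{\lambda,(-1,-1)})}$. Substituting $t=-Q\kappa^{1/2}$, $q=-Q\kappa^{-1/2}$ (so $Q^2=tq$ and $\kappa = t/q$), the value $\ind_\sigma = (||\lambda^t||^2-||\lambda||^2)/2$ collapses the right-hand side to $t^{||\lambda^t||^2/2}q^{||\lambda||^2/2}$ and the value $\ind_\sigma = (||\lambda||^2-||\lambda^t||^2)/2$ collapses it to $t^{||\lambda||^2/2}q^{||\lambda^t||^2/2}$; matching these against which index value occurs for each of the six preferred slopes reproduces exactly the table in the statement.
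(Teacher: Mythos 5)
Your proposal is correct and follows essentially the same route as the paper: expand $E_{\lambda,(-1,-1)}$ explicitly via (\ref{edgesub}) and (\ref{edgeterm}), read off the deformation weights (\ref{indwts}), determine their attracting/repelling behavior for each preferred slope to get $\ind_\sigma = \pm(||\lambda^t||^2-||\lambda||^2)/2$, combine with $\chi(\lambda,(-1,-1))=(||\lambda^t||^2+||\lambda||^2)/2$ from (\ref{edgeeul}), and pass to $t,q$. Your added observations — that preferred slopes are automatically $E_{\lambda,(-1,-1)}$-generic and that the negation relation cuts the six cases to three — are consistent with, and slightly more explicit than, the paper's presentation.
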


We repeat the same procedure for a $T$-fixed rational curve in $X$ with normal bundle $\mathcal{O}\oplus\mathcal{O}(-2)$, as depicted in Figure \ref{minus2}.

 \begin{figure}[htb]
\centering{
\scalebox{.4}{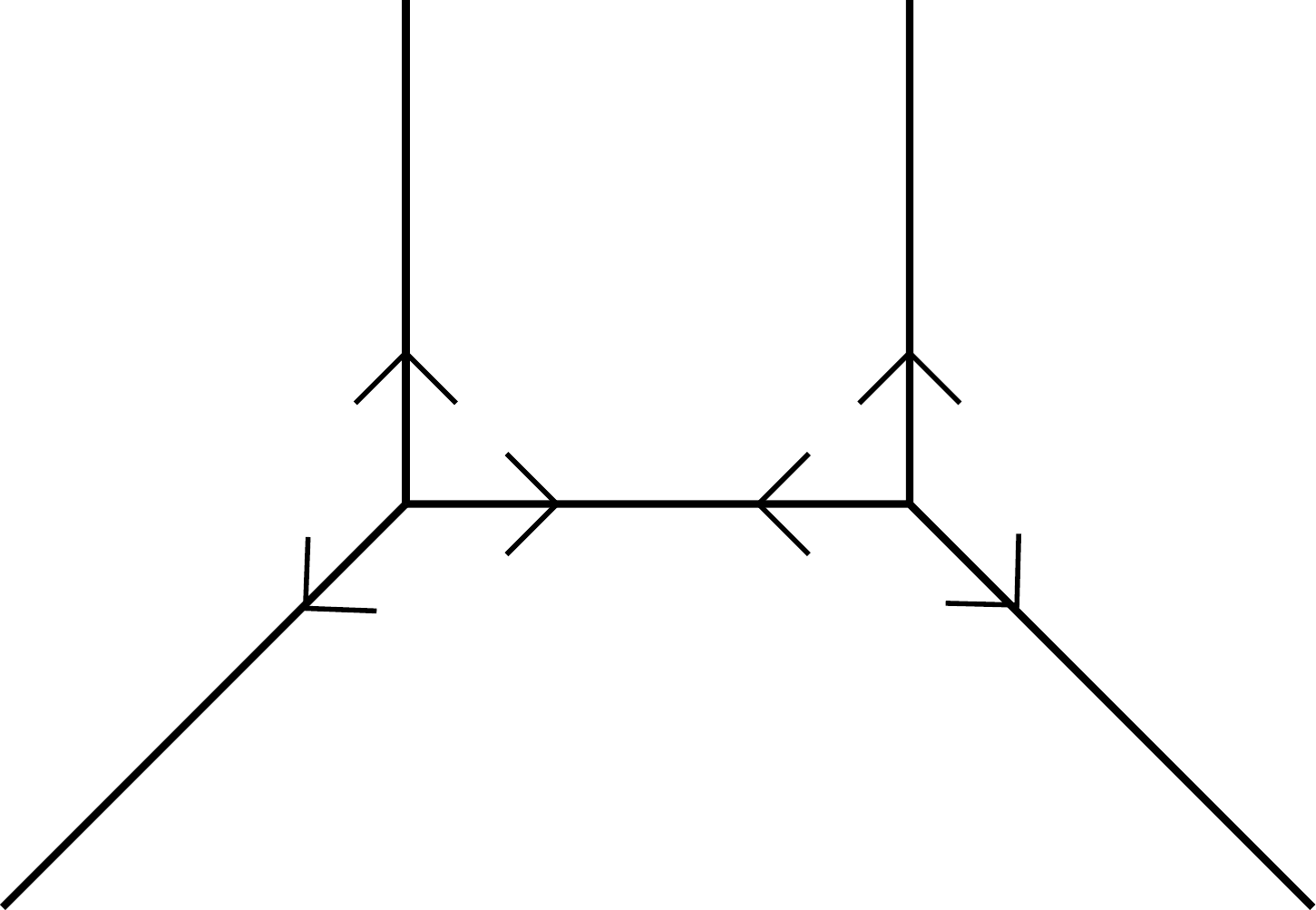}
\caption{The weights along a $T$-fixed rational curve with normal bundle $\mathcal{O}\oplus \mathcal{O}(-2)$} \label{minus2}
}
\end{figure}
We then compute 
 \begin{align*} E_{\lambda,(0,-2)}(t_1,t_2,t_3)&= \frac{T_{\lambda}(t_2,t_3)}{1-t_1^{-1}}+\frac{T_{\lambda^t}(t_1^2t_3,t_2)}{1-t_1} \nonumber
\\&= \sum_{\square\in\lambda} \frac{1-t_1^{2a(\square)+1}}{1-t_1^{-1}}t_2^{-l(\square)}t_3^{a(\square)+1}+\frac{1-t_1^{-2a(\square)-1}}{1-t_1^{-1}}t_2^{l(\square)+1}t_3^{-a(\square)} \nonumber
\\&= \sum_{\square\in\lambda} \Big((1+t_1^{-1}+\cdots + t_1^{-2a(\square)} )t_2^{l(\square)+1}t_3^{-a(\square)}\\ &\ \ \ \ \  \ \ \ \ \ -(t_1+\cdots+t_1^{2a(\square)+1})t_2^{-l(\square)}t_3^{a(\square)+1} \Big). 
\end{align*}

From the identity \begin{align}\label{edge2eq} \sum_{\square \in \lambda} (2a(\square)+1)=2\frac{||\lambda||^2-|\lambda|}{2}+|\lambda|=||\lambda||^2,\end{align} we conclude the following.
$$\begin{tabu}{c|c}
\sigma=\sigma(r_1,r_2,r_3) & \ind_{\sigma}(E_{\lambda,(0,-2)}(t_1,t_2,t_3)) \\
\hline 
r_1\gg r_2>0\gg r_3 & |\lambda| \\
\hline
r_2\gg r_1>0\gg r_3 & ||\lambda||^2\\
\hline
r_1\gg 0>r_2\gg r_3 & -|\lambda|  \\
\hline
r_2\gg 0>r_1\gg r_3 & ||\lambda||^2 \\
\end{tabu}$$

By (\ref{edgeeul}) we have $$\chi(\lambda,(0,-2))=\sum_{(b_1,b_2)\in \lambda} \big(1+2b_2\big)=|\lambda|+2\frac{||\lambda||^2-|\lambda|}{2}=||\lambda||^2.$$ We conclude the following.
\begin{proposition}\label{edge2} 
The limits of the edge contribution $\hat{E}(\lambda,(0,-2))$ under preferred slopes are as follows.
$$\begin{tabu}{c|c}
\sigma=\sigma(r_1,r_2,r_3) & \hat{E}(\lambda,(0,-2))(t_1,t_2,t_3)^{\sigma} \\
\hline 
r_1\gg r_2>0\gg r_3 & t^\frac{||\lambda||^2+|\lambda|}{2}q^\frac{||\lambda||^2-|\lambda|}{2} \\
\hline
r_2\gg r_1>0\gg r_3 & t^{||\lambda||^2}\\
\hline
r_1\gg 0>r_2\gg r_3 & t^\frac{||\lambda||^2-|\lambda|}{2}q^\frac{||\lambda||^2+|\lambda|}{2}  \\
\hline
r_2\gg 0>r_1\gg r_3 & t^{||\lambda||^2} \\
\hline
r_3\gg  0 > r_2 \gg  r_1 & t^\frac{||\lambda||^2-|\lambda|}{2}q^\frac{||\lambda||^2+|\lambda|}{2}\\
\hline
r_3\gg 0>r_1\gg r_2 & q^{||\lambda||^2}\\
\hline
r_3 \gg  r_2 > 0 \gg r_1 & t^\frac{||\lambda||^2+|\lambda|}{2}q^\frac{||\lambda||^2-|\lambda|}{2} \\
\hline
r_3\gg r_1>0\gg r_2 & q^{||\lambda||^2} \\
\end{tabu}$$

\end{proposition}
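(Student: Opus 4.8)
The plan is to assemble the statement from ingredients already established: the four-row index table for $E_{\lambda,(0,-2)}$ displayed above, the value $\chi(\lambda,(0,-2))=||\lambda||^2$ obtained from (\ref{edgeeul}), and the rigidity consequence of (\ref{rigidity}) that $\hat{a}(V)^{\sigma}=(-\kappa^{1/2})^{\ind_{\sigma}(V)}$ whenever $V$ is as in Definition \ref{defindex} and $\sigma$ is $V$-generic. Since the prefactor in $\hat{E}(\lambda,(0,-2))(t_1,t_2,t_3)=Q^{\chi(\lambda,(0,-2))}\hat{a}\bigl(E_{\lambda,(0,-2)}(t_1,t_2,t_3)\bigr)$ is fixed by any slope, the whole computation reduces to finding $\ind_{\sigma}(E_{\lambda,(0,-2)})$ for the eight preferred slopes in the statement and rewriting $Q^{||\lambda||^2}(-\kappa^{1/2})^{\ind_{\sigma}}$ in the variables $t=-Q\kappa^{1/2}$, $q=-Q\kappa^{-1/2}$.

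First I would confirm that each slope $\sigma=\sigma(r_1,r_2,r_3)$ in the statement is $E_{\lambda,(0,-2)}$-generic, so the index is defined: writing $r_3=-r_1-r_2$, the weight $t_1^{-j}t_2^{l(\square)+1}t_3^{-a(\square)}$ in the explicit expansion of $E_{\lambda,(0,-2)}$ above is scaled by $\sigma(z)$ by the factor $z^{(a(\square)-j)r_1+(a(\square)+l(\square)+1)r_2}$, and for a preferred slope this exponent never vanishes (the $r_1$-term dominates when $a(\square)\neq j$, and when $a(\square)=j$ the exponent equals $(a(\square)+l(\square)+1)r_2\neq 0$). The four slopes in the statement with $r_3<0$ are precisely the four rows of the index table, so $\ind_{\sigma}\in\{|\lambda|,\,||\lambda||^2,\,-|\lambda|,\,||\lambda||^2\}$ for those; the remaining four (the rows with $r_3>0$) are their images under $(r_1,r_2,r_3)\mapsto(-r_1,-r_2,-r_3)$, so by $\ind_{\sigma(-r_1,-r_2,-r_3)}(V)=-\ind_{\sigma(r_1,r_2,r_3)}(V)$ their indices are $-|\lambda|,\,-||\lambda||^2,\,|\lambda|,\,-||\lambda||^2$ respectively.

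Next I would convert to $t,q$. Each occurring index is congruent to $||\lambda||^2$ modulo $2$; indeed $||\lambda||^2=\sum_i\lambda_i^2\equiv\sum_i\lambda_i=|\lambda|\pmod 2$, and each index equals $\pm|\lambda|$ or $\pm||\lambda||^2$. From $t^{a}q^{b}=(-1)^{a+b}Q^{a+b}\kappa^{(a-b)/2}$, choosing $a=\tfrac{1}{2}(||\lambda||^2+\ind_{\sigma})$ and $b=\tfrac{1}{2}(||\lambda||^2-\ind_{\sigma})$ reproduces both the power of $Q$ and the power of $\kappa$ in $Q^{||\lambda||^2}(-\kappa^{1/2})^{\ind_{\sigma}}$, and the parity congruence forces $(-1)^{a+b}=(-1)^{||\lambda||^2}=(-1)^{\ind_{\sigma}}$, so the sign is absorbed and $\hat{E}(\lambda,(0,-2))^{\sigma}=t^{(||\lambda||^2+\ind_{\sigma})/2}\,q^{(||\lambda||^2-\ind_{\sigma})/2}$. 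Substituting the eight index values produces the eight rows of the table.

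With the index table in hand this is bookkeeping rather than a genuine obstacle; the point requiring care is the sign tracking in the last step, which goes through precisely because $||\lambda||^2$, $|\lambda|$, and every occurring index share a common parity. If one wishes instead to re-derive the index table itself from the explicit expansion of $E_{\lambda,(0,-2)}$, the delicate case in analyzing the sign of the exponent $(a(\square)-j)r_1+(a(\square)+l(\square)+1)r_2$ over $j=0,\dots,2a(\square)$ is the tie $j=a(\square)$, where the leading $r_1$-contribution vanishes and attractivity is decided by the sign of $(a(\square)+l(\square)+1)r_2$, i.e., by the sign of $r_2$.
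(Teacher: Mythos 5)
Your proposal is correct and follows essentially the same route as the paper: the explicit expansion of $E_{\lambda,(0,-2)}$ and the identity $\sum_{\square\in\lambda}(2a(\square)+1)=||\lambda||^2$ give the four-row index table, the relation $\ind_{\sigma(-r_1,-r_2,-r_3)}(V)=-\ind_{\sigma(r_1,r_2,r_3)}(V)$ supplies the remaining rows, and combining with $\chi(\lambda,(0,-2))=||\lambda||^2$ and $\hat{a}(V)^{\sigma}=(-\kappa^{1/2})^{\ind_{\sigma}(V)}$ yields the table. Your explicit verification of genericity (the tie case $j=a(\square)$) and of the sign bookkeeping via the common parity of $||\lambda||^2$ and $|\lambda|$ are details the paper leaves implicit, and both check out.
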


\subsubsection{Vertex limits}

The vertex $V_{\pi}(t_1,t_2,t_3)$ defined in Section \ref{vertexsection} is a (suitably renormalized) quadratic expression in the character of the three-dimensional partition $\pi$.  For an arbitrary partition $\pi$ and slope $\sigma$, there is no known concise expression for $\ind_{\sigma}(V_{\pi}(t_1,t_2,t_3))$. 

However, when $\sigma$ is a preferred slope, it is shown in \cite[Sec. 8]{NO} that the index $\ind_{\sigma}(V_{\pi}(t_1,t_2,t_3))$ is equal to the index of a different character written as a single sum over the boxes of $\pi$. This simpler expression for the index yields a combinatorial expression for preferred limits of the vertex that is well-suited to computations.

We recall this procedure from \cite[Sec. 8]{NO}. Some notation for partitions is needed. Fix a two-dimensional partition $\nu\subset \Z_{\geq 0}^2,$ and let $c^{-}_{i}$ and $c^{+}_{i}$ denote the ordered values of $b_2-b_1$ when $(b_1,b_2)$ are among the inner and outer corners of $\nu$, respectively; see Figure \ref{fig:corners}.

\begin{figure}[bht]
 \scalebox{.25}{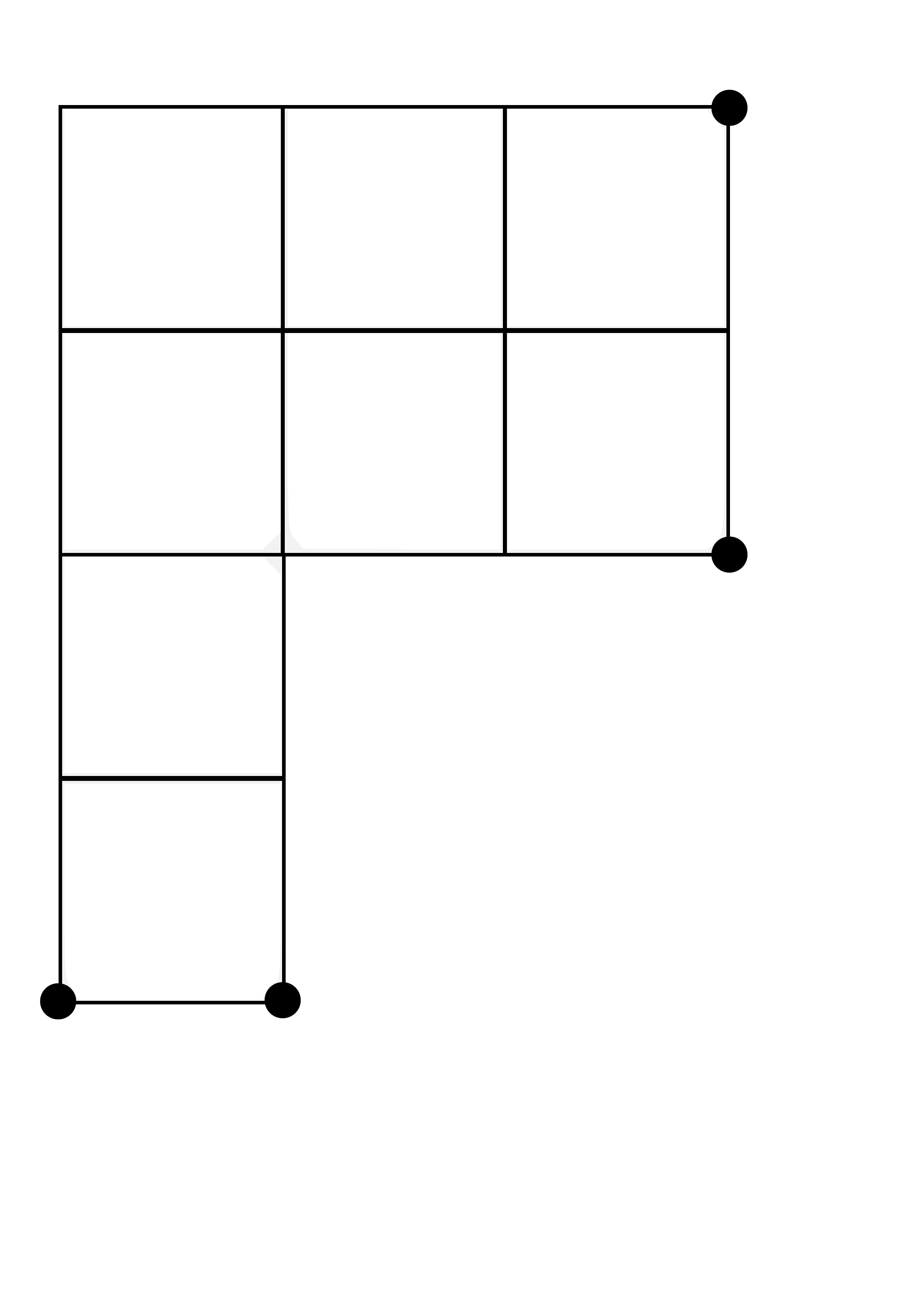}
\caption{The partition $\lambda=(3,3,1,1)$ with corners labeled}
\label{fig:corners}
\end{figure}

To be precise, suppose $n_1>\ldots>n_{d(\nu)}$ are the distinct parts occuring in $\nu$, and $\nu$ is the partition in which the part $n_i$ occurs with nonzero multiplicity $m_i.$ Then for $i=0,\cdots,d(\nu)$, we set $$c^{-}_{i}=n_{i+1}-\sum_{j=1}^i m_j,$$ (here $n_{d(\nu)}+1=0$), and for $i=1,\ldots,d(\nu)$, we set $$c^{+}_{i}=n_{i}-\sum_{j=1}^i m_j.$$ When $\nu=\emptyset$, the only corner is $c^-_0=0.$

We then define a function $\psi_{\nu}: \Z_{\geq 0}^3\to\{t^{-1}_1,t^{-1}_2,t_3,t_3^{-1}\}$ as follows:

\begin{equation*} \psi_{\nu}(b_1,b_2,b_3) = \begin{cases} t_2^{-1} & \text{if}\ b_2-b_1  > c^{-}_0\ \\& \text{or}\ c^+_i> b_2-b_1 > c^-_i\ \text{for some}\ i \\ t_1^{-1} &  \text{if}\ c^-_i>b_2-b_1> c^+_{i+1}  \ \text{for some}\ i \\&  \text{or}\ c^{-}_{d(\nu)}> b_2-b_1\\ t_3 & \text{if}\ b_2-b_1=c_i^{-} \text{for some}\ i \\ t^{-1}_3 & \text{if}\ b_2-b_1=c_i^{+} \text{for some}\ i.
\end{cases}
\end{equation*}

In particular, when $\nu$ is empty, we have
\begin{equation*} \psi_{\emptyset}(b_1,b_2,b_3) = \begin{cases} t_2^{-1} & \text{if}\ b_2-b_1 > 0 \\ t_3 &  \text{if}\ b_2-b_1=0 \\ t_1^{-1} &  \text{if}\ b_2-b_1<0. 
\end{cases}
\end{equation*}

Now, given a three-dimensional partition $\pi$, a box $\square\in \pi$ and $k=1,2$ or $3$, set 
\begin{equation*} \delta_{k}(\square)=\begin{cases} 1 & \text{if} \ \square\in \pi\langle k\rangle \\ 0 & \text{if} \ \square\not\in \pi\langle k\rangle.  \end{cases}
\end{equation*}

Then, we define the virtual character $W_{\pi}(t_1,t_2,t_3)$ to be 
 \begin{align}\label{Wchar} W_{\pi}(t_1,t_2,t_3)=  \sum_{\square\in \pi} \Big(\psi_{\pi^{(3)}}(\square)-\delta_1(\square)\psi_{\emptyset}(\square)-\delta_2(\square)\psi_{\emptyset}(\square)-\delta_3(\square)\psi_{\pi^{(3)}}(\square) \Big). \end{align} 

Note the similarity between (\ref{vertexeul1}) and (\ref{Wchar}). Given some fixed $\pi$, we have $\psi_{\pi^{(3)}}(b_1,b_2,b_3)=\psi_{\emptyset}(b_1,b_2,b_3)$ for $|b_2-b_1|\gg 0$. So, the sum in (\ref{Wchar}) is finite.

We can now recall from \cite{NO}  a formula  for $\ind_{\sigma}(V_{\pi})$ for preferred slopes $\sigma$.
\begin{theorem}\label{nekok} \cite[Thm. 2]{NO}
If $\pi$ is a three-dimensional partition and $\sigma(r_1,r_2,r_3)$ is a preferred slope with with $|r_1|,|r_2|\gg |r_3|$, then
\begin{align}\label{indexlim}\ind_{\sigma(r_1,r_2,r_3)}(V_{\pi}(t_1,t_2,t_3))=\ind_{\sigma(r_1,r_2,r_3)}\Big(W_{\pi}(t_1,t_2,t_3)-\kappa\cdot W_{\pi}^{\vee}(t_1,t_2,t_3)\Big).\end{align}
\end{theorem}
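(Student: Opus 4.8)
The plan is to reduce the identity to additivity of the index together with an analysis of how monomials behave under a preferred slope. First, $\ind_\sigma$ extends to a $\Z$-linear functional on the group of symmetric Laurent polynomials (those $P$ with $P = -P^{\vee}\kappa$): writing $P = \sum_m c_m m$ and using that monomials pair as $m \leftrightarrow \kappa/m$ with opposite coefficients, one checks $\ind_\sigma(P) = \tfrac12\sum_m c_m\,\epsilon_\sigma(m)$, where $\epsilon_\sigma(m)$ is $+1$ or $-1$ according as $m$ is attracting or repelling for $\sigma$; on the canonical $\Def/\Obs$ presentation this agrees with Definition \ref{defindex}. Both $V_\pi$ and $W_\pi - \kappa W_\pi^{\vee}$ are symmetric Laurent polynomials — the former by (\ref{termsym}) together with \cite[Lemma 9]{MNOP1}, the latter because (\ref{Wchar}) is a finite sum and $(W_\pi - \kappa W_\pi^{\vee})^{\vee}\kappa = \kappa W_\pi^{\vee} - W_\pi$. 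Hence it suffices to show $\ind_\sigma(V_\pi - W_\pi + \kappa W_\pi^{\vee}) = 0$ for every preferred slope $\sigma = \sigma(r_1,r_2,r_3)$ with $|r_1|,|r_2| \gg |r_3|$.

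Second, I would use the defining feature of preferred slopes to split $\ind_\sigma$ into a coarse part and a fine part. Grade Laurent polynomials by $d(m) = a_1 - a_2$ for $m = t_1^{a_1}t_2^{a_2}t_3^{a_3}$. Since $r_1 + r_2 + r_3 = 0$ and $|r_3|$ is negligible next to $r_1 \approx -r_2$, a monomial with $d(m) \neq 0$ is attracting exactly when $d(m)$ and $r_1$ agree in sign, independently of its $t_3$-content, whereas a monomial with $d(m) = 0$ — necessarily $t_1^{a}t_2^{a}t_3^{b}$, and by $\sigma$-genericity not a power of $\kappa$, so $b \neq a$ — is attracting exactly when $b - a$ and $r_3$ agree in sign. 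Combined with the graded symmetry $P_d = -\kappa\,(P_{-d})^{\vee}$ (so $\dim P_d = -\dim P_{-d}$), this gives, say for $r_1 > 0$,
\begin{align*}\ind_\sigma(P) = \sum_{d \geq 1}\dim(P_d) + \ind_\sigma(P_0).\end{align*}
The first sum is encoded by the one-variable specialization $P(x,x^{-1},1) = \sum_d \dim(P_d)\,x^d$ (an antisymmetric Laurent polynomial in $x$) as the sum of its positive-power coefficients; the second term depends only on the $t_3$-content of $P_0$ and is likewise read off from a one-variable specialization collapsing $t_1$ and $t_2$. So the theorem reduces to two purely combinatorial assertions: that the specializations of $V_\pi$ and of $W_\pi - \kappa W_\pi^{\vee}$ at $t_1 = x,\ t_2 = x^{-1},\ t_3 = 1$ have equal sum of positive-power coefficients, and that the $d=0$ parts of $V_\pi$ and $W_\pi - \kappa W_\pi^{\vee}$ have equal index under $\sigma$.

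Establishing these two equalities from the box formulas is the heart of the matter, and is precisely the computation of \cite[Sec.\ 8]{NO}, whose argument I would follow. One feeds the MNOP expression $T^{\vir}_\pi = O_\pi - O_\pi^{\vee}\kappa - O_\pi O_\pi^{\vee}(1-t_1)(1-t_2)(1-t_3)$ into the definition (\ref{vertexterm}) of $V_\pi$, and uses the standard telescoping of $O_\pi(1-t_1)(1-t_2)(1-t_3)$ into contributions supported on the three coordinate faces of $\pi$. Under the specializations above the quadratic term $O_\pi O_\pi^{\vee}(1-t_1)(1-t_2)(1-t_3)$ collapses, leaving only contributions attached to individual boxes; the function $\psi_{\pi^{(3)}}$ is built so that the surviving contribution of a box $\square$ is exactly $\psi_{\pi^{(3)}}(\square)$, whose value is dictated by where the diagonal $b_2 - b_1 = \mathrm{const}$ through $\square$ sits relative to the inner and outer corners $c_i^{\pm}$ of $\pi^{(3)}$, while the terms $\delta_k(\square)\,\psi_\bullet(\square)$ account for the regularizing edge subtractions $T_{\pi^{(k)}}(w_{k+1},w_{k+2})/(1-w_k^{-1})$ along the three legs.

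The step I expect to be the main obstacle is the bookkeeping for partitions $\pi$ with infinite legs: there $O_\pi$ is only a rational function, so the telescoping identities must first be read as equalities of rational functions before $V_\pi$ appears as a Laurent polynomial, and one must carefully track how the three leg-subtractions interact near the corners of $\pi^{(3)}$ — in particular matching the ``corner'' values $t_3^{\pm 1}$ of $\psi_{\pi^{(3)}}$ — which is where the combinatorics is most delicate. A secondary point is to justify passing between the rational-function and power-series interpretations when forming the one-variable specializations, so that ``sum of positive-power coefficients'' is unambiguous.
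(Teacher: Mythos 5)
First, a point of comparison: the paper does not actually prove this statement — Theorem \ref{nekok} is imported verbatim from \cite[Thm.~2]{NO}, so there is no in-paper argument to measure your proposal against. Judged on its own terms, the preliminary reductions you carry out are correct and genuinely clarifying: the observation that $\ind_\sigma$ is the well-defined additive functional $P\mapsto\tfrac12\sum_m c_m\,\epsilon_\sigma(m)$ on Laurent polynomials satisfying $P=-\kappa P^{\vee}$; the verification that both $V_\pi$ (via (\ref{termsym})) and $W_\pi-\kappa W_\pi^{\vee}$ lie in this class; and the splitting of $\ind_\sigma$ for a preferred slope into the coarse part $\sum_{d\ge 1}\dim P_d$, read off from $P(x,x^{-1},1)$, plus the fine part $\ind_\sigma(P_0)$ governed by the sign of $r_3$. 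This correctly isolates the mechanism that makes preferred slopes special, and the genericity points (no monomial a power of $\kappa$, hence $b\neq a$ on the wall $a_1=a_2$) are handled properly.

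However, the proposal does not prove the theorem. The two combinatorial identities to which you reduce it — that $V_\pi$ and $W_\pi-\kappa W_\pi^{\vee}$ have the same sum of positive-power coefficients after $t_1=x$, $t_2=x^{-1}$, $t_3=1$, and that their $d=0$ parts have equal index — \emph{are} the content of \cite[Thm.~2]{NO}, and you explicitly defer their verification to \cite[Sec.~8]{NO}. The telescoping of $O_\pi O_\pi^{\vee}(1-t_1)(1-t_2)(1-t_3)$ under these specializations, the matching of the surviving box contributions with $\psi_{\pi^{(3)}}$ (in particular the corner values $t_3^{\pm1}$ along the diagonals $b_2-b_1=c_i^{\pm}$), and the cancellation against the three leg regularizations in (\ref{vertexterm}) for partitions with infinite legs are where essentially all of the work lies, and none of it is carried out; you yourself flag this as the main obstacle. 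So what you have is a sound reduction framework wrapped around a citation of the very result being proved: acceptable as a way of \emph{using} the theorem (which is all the paper does), but as a self-contained proof it has a gap exactly at its center.
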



Given a preferred slope $\sigma(r_1,r_2,r_3)$ with $|r_1|,|r_2|\gg |r_3|$, and a two-dimensional partition $\nu$, define  $$\Psi^{\sigma}_{\nu}:\Z_{\geq 0}^3\to \{-Q\kappa^{1/2},-Q\kappa^{-1/2}\}=\{t,q\}$$ to be the function $$\Psi^{\sigma}_{\nu}(b_1,b_2,b_3)=\begin{cases} t & \text{if}\ \psi_{\nu}(b_1,b_2,b_3)\ \text{is attracting with respect to}\ \sigma \\ q & \text{if}\ \psi_{\nu}(b_1,b_2,b_3)\ \text{is repelling with respect to}\ \sigma.\end{cases}$$  

For example, if $r_1\gg r_3 > 0 \gg r_2$, then $$\Psi^{\sigma(r_1,r_2,r_3)}_{\emptyset}(b_1,b_2,b_3)=\begin{cases} t & \text{if}\ b_2-b_1\geq 0 \\ q & \text{if}\ b_2-b_1<0, \end{cases}$$ and more generally, $$\Psi^{\sigma(r_1,r_2,r_3)}_{\nu}(b_1,b_2,b_3)=\begin{cases} t & \text{if}\ \psi_{\nu}(b_1,b_2,b_3)\in \{t_2^{-1}, t_3\} \\ q & \text{if}\ \psi_{\nu}(b_1,b_2,b_3)\in \{t_1^{-1}, t^{-1}_3\}  .\end{cases}$$

For $N>0$, set $$B_N=\{(b_1,b_2,b_3) |\  |b_2-b_1|\leq N \}.$$ Note that for any fixed $N$, the sets $B_N\cap \pi\langle 1\rangle, B_N\cap \pi\langle 2 \rangle$ and $B_N\cap (\pi -\pi\langle 3\rangle)$ consist of finitely many boxes. Combining (\ref{vertexeul1}) and Theorem \ref{nekok}, we obtain the following.

\begin{corollary}
For a preferred slope $\sigma(r_1,r_2,r_3)$ with $|r_1|,|r_2|\gg |r_3|$ and  $N\gg 0$, we have \begin{align}\label{singlecont} Q^{\chi(\pi)}\hat{a}(V_{\pi}(t_1,t_2,t_3))^{\sigma(r_1,r_2,r_3)}=\frac{\displaystyle\prod_{\{\square \in \pi\cap B_N\ |\  \square\not\in \pi\langle 3\rangle \} } \Psi^{\sigma}_{\pi^{(3)}}(\square)}{\displaystyle\prod_{\square\in B_N\cap\pi\langle 1\rangle }\Psi^{\sigma}_{\emptyset}(\square)\displaystyle\prod_{\square\in B_N\cap\pi\langle 2\rangle}\Psi^{\sigma}_{\emptyset}(\square)}.
\end{align}
\end{corollary}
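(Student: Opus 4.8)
The plan is to chain the two inputs already available and then unwind everything box by box. The first is the rigidity identity $\hat{a}(V)^{\sigma}=(-\kappa^{1/2})^{\ind_{\sigma}(V)}$ established after Definition \ref{defindex}, which applies to $V=V_{\pi}(t_1,t_2,t_3)$ because of the self-duality (\ref{termsym}) and because a preferred slope $\sigma=\sigma(r_1,r_2,r_3)$ with $|r_1|,|r_2|\gg|r_3|$ is $V_{\pi}$-generic. The second is the Nekrasov--Okounkov formula $\ind_{\sigma}(V_{\pi})=\ind_{\sigma}(W_{\pi}-\kappa W_{\pi}^{\vee})$ of Theorem \ref{nekok}. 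Combining these,
$$Q^{\chi(\pi)}\hat{a}(V_{\pi}(t_1,t_2,t_3))^{\sigma}=Q^{\chi(\pi)}(-\kappa^{1/2})^{\ind_{\sigma}(W_{\pi}-\kappa W_{\pi}^{\vee})},$$
and I would expand each exponent as a finite sum over $\square\in\pi$: the renormalized volume via (\ref{vertexeul1}) as $\chi(\pi)=\sum_{\square}(1-\delta_1(\square)-\delta_2(\square)-\delta_3(\square))$, and the index via (\ref{Wchar}) together with additivity of $\ind_{\sigma}$ over presentations $V=\sum_i(u_i-v_i)$ with $u_iv_i=\kappa$.

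The core is the index bookkeeping. Writing $s(w)=+1$ if a $\sigma$-nontrivial weight $w$ is attracting and $-1$ if repelling, one has $\ind_{\sigma}(w-\kappa w^{-1})=s(w)$; and since $\sigma$ lands in $\ker(\kappa)$, so $\kappa$ is $\sigma$-fixed, $s(\kappa w^{-1})=-s(w)$, hence $\ind_{\sigma}(\kappa w^{-1}-w)=-s(w)$. Applying this to the four terms of the box-summand of (\ref{Wchar})—all of whose $\psi$-values lie in $\{t_1^{-1},t_2^{-1},t_3,t_3^{-1}\}$, hence are $\sigma$-nontrivial for preferred $\sigma$—gives per box the index contribution $(1-\delta_3(\square))\,s(\psi_{\pi^{(3)}}(\square))-(\delta_1(\square)+\delta_2(\square))\,s(\psi_{\emptyset}(\square))$. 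Grouping the volume and index contributions by box then presents the left-hand side as
$$\prod_{\square\in\pi}\big(Q(-\kappa^{1/2})^{s(\psi_{\pi^{(3)}}(\square))}\big)^{1-\delta_3(\square)}\big(Q(-\kappa^{1/2})^{s(\psi_{\emptyset}(\square))}\big)^{-\delta_1(\square)-\delta_2(\square)}.$$
Since $t=-Q\kappa^{1/2}$ and $q=-Q\kappa^{-1/2}$, we have $Q(-\kappa^{1/2})^{s(w)}=t$ if $w$ is attracting and $q$ if repelling, i.e. $Q(-\kappa^{1/2})^{s(\psi_{\nu}(\square))}=\Psi^{\sigma}_{\nu}(\square)$ for $\nu=\pi^{(3)}$ or $\emptyset$, so the product collapses to $\prod_{\square\in\pi}\Psi^{\sigma}_{\pi^{(3)}}(\square)^{1-\delta_3(\square)}\Psi^{\sigma}_{\emptyset}(\square)^{-\delta_1(\square)-\delta_2(\square)}$.

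The last step is to identify this formally infinite product with the truncated quotient in (\ref{singlecont}), and this is where I expect the main obstacle to lie. I would choose $N$ large enough that $\psi_{\pi^{(3)}}(\square)=\psi_{\emptyset}(\square)$ whenever $|b_2-b_1|>N$, that $\pi\langle 3\rangle\subseteq B_N$, and that $B_N$ contains the finitely many boxes of $\pi$ in no leg. For $\square\in\pi$ with $|b_2-b_1|>N$ one then has $\delta_3(\square)=0$ and $\psi_{\pi^{(3)}}(\square)=\psi_{\emptyset}(\square)$, and $\square$ lies in exactly one of $\pi\langle 1\rangle,\pi\langle 2\rangle$—it must lie in some leg, it is not in $\pi\langle 3\rangle$, and it cannot lie in both $\pi\langle 1\rangle$ and $\pi\langle 2\rangle$ since that would bound both $b_1$ and $b_2$—so its factor equals $\Psi^{\sigma}_{\emptyset}(\square)^{1-\delta_1(\square)-\delta_2(\square)}=1$. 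Hence the product may be restricted to $\square\in\pi\cap B_N$, and splitting this finite product (using $\pi\langle k\rangle\subseteq\pi$) into subproducts over $\{\square\in\pi\cap B_N:\square\notin\pi\langle 3\rangle\}$, $B_N\cap\pi\langle 1\rangle$ and $B_N\cap\pi\langle 2\rangle$ yields exactly (\ref{singlecont}). The delicate point throughout is the last one—tracking which boxes far from the origin belong to which legs and verifying that the tail factors genuinely cancel in pairs; the first two steps are essentially formal once the $\kappa$-self-duality and Theorem \ref{nekok} are invoked.
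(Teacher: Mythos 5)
Your argument is correct and is exactly the route the paper intends: it derives the corollary by combining the rigidity identity $\hat{a}(V)^{\sigma}=(-\kappa^{1/2})^{\ind_{\sigma}(V)}$ with Theorem \ref{nekok} and the box-by-box expansions of $\chi(\pi)$ and $W_{\pi}$, which is precisely what the paper's one-line ``Combining (\ref{vertexeul1}) and Theorem \ref{nekok})'' elides. Your careful handling of the tail boxes with $|b_2-b_1|>N$ (each lying in exactly one of $\pi\langle 1\rangle,\pi\langle 2\rangle$ and contributing a cancelling factor) correctly supplies the only nontrivial verification.
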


We now recall how to use expression (\ref{singlecont}) to compute the limit $\hat{V}(\lambda,\mu,\nu)^{\sigma}$ of the refined vertex under preferred slopes $\sigma(r_1,r_2,r_3)$. For this computation, we now assume that $r_1\gg r_3 >0 \gg r_2$.

We first compute the denominator of (\ref{singlecont}) for such $\sigma$. Let $\pi\in P(\lambda,\mu,\nu)$ and take $N$ to be sufficiently large (say, greater than $|\lambda|$ and $|\mu|$.) Then, if $0\leq c< l(\lambda)$, then the number of  boxes $(b_1,b_2,b_3)\in B_N\cap \pi\langle 1\rangle$  with  $b_2-b_1=c$ is $$\sum_{i=c+1}^{l(\lambda)} \lambda_i.$$ On the other hand, if $-N\leq c<0$, then the number of boxes $(b_1,b_2,b_3)\in B_N\cap \pi\langle 1\rangle$ with $b_2-b_1=c$ is $|\lambda|$. 

As $$\sum_{c=1}^{l(\lambda)} \sum_{i=c+1}^{l(\lambda)} \lambda_i = \sum_{i=1}^{l(\lambda)} (i-1)\lambda_i=\sum_{(b_1,b_2)\in \lambda} b_1 = \frac{||\lambda^t||^2-|\lambda|}{2},$$
we conclude that
$$\prod_{\square\in B_N\cap\pi\langle 1 \rangle}\Psi^{\sigma}_{\emptyset}(\square)=q^{N|\lambda|}t^{|\lambda|}t^{\frac{||\lambda^t||^2-|\lambda|}{2}},$$ where the first of the three factors on the right hand side arises from counting those $(b_1,b_2,b_3)\in B_N\cap\pi\langle 1 \rangle$ with $b_2-b_1<0$, the second factor arises from counting such boxes with $b_2-b_1=0$, and the third arises from counting such boxes with $b_2-b_1>0$.

Similarly, $$\prod_{\square\in B_N\cap\pi\langle 2 \rangle}\Psi^{\sigma}_{\emptyset}(\square)=t^{N|\mu|}t^{|\mu|}q^{\frac{||\mu||^2-|\mu|}{2}}.$$

The limit $\hat{V}(\lambda,\mu,\nu)(t_1,t_2,t_3)^{\sigma}$ can then be computed by summing (\ref{singlecont}) over all partitions $\pi\in P(\lambda,\mu,\nu)$. Note that there are no values of $(r_1, r_2, r_3)$ and $N$ for which (\ref{singlecont}) holds simultaneously for all $\pi\in P(\lambda,\mu,\nu)$. However, for any fixed $m\in \Z$ one may choose $R_m, R'_m$ and $N_m\in \Z$ such that for any $\pi\in P(\lambda,\mu,\nu),$ and triple $(r_1,r_2,r_3)\in \Z^3$ and $N>0$ satisfying $$\chi(\pi)\leq m,\ r_1>R_m>R'_m>r_3>0>r_2,\  \mathrm{and}\ N>N_m,$$ the equation (\ref{singlecont}) holds. In plainer language, any individual term of the limit $$\hat{V}(\lambda,\mu,\nu)(t_1,t_2,t_3)^{\sigma(r_1,r_2,r_3)}$$ stabilizes as $r_1\gg r_3>0\gg r_2$, and can be computed using (\ref{singlecont}).

Applying (\ref{singlecont}) (and recalling that $r_1\gg r_3>0\gg r_2$) we have \begin{align} \hat{V}&(\lambda,\mu,\nu)(t_1,t_2,t_3)^{\sigma(r_1,r_2,r_3)}\nonumber \\&=\sum_{\pi\in P(\lambda,\mu,\nu)} \lim_{N\to \infty}\bigg(\frac{t^{-\frac{||\lambda^t||^2-|\lambda|}{2}}q^{-\frac{||\mu||^2-|\mu|}{2}}}{q^{N|\lambda|}t^{|\lambda|} t^{N|\mu|}t^{|\mu|}}\displaystyle\prod_{\{ \square \in \pi\cap B_N\ |\ \square\not\in\pi\langle 3\rangle\}}  \Psi^{\sigma}_{\nu}(\square) \bigg).\label{vertexsum}\end{align} A general framework for evaluating such sums was introduced in \cite{OR}. Namely, given a partition $\pi\in P(\lambda,\mu,\nu)$, the configuration $\pi/\pi\langle 3\rangle$ can be regarded as a sequence of two-dimensional partitions whose interlacing behavior is read off from $\nu$. Sums of the form (\ref{vertexsum}) can then be evaluated using a transfer matrix approach, where each such two-dimensional partition is assigned a weight using $\Psi^{\sigma}_{\nu}$.

This procedure is carried out in \cite[Eqn (150)]{IKV} and yields the answer \begin{align*}\sum_{\pi\in P(\lambda,\mu,\nu)}& \lim_{N\to \infty}\frac{\displaystyle\prod_{\{\square \in \pi\cap B_N, \square\not\in\pi\langle 3\rangle \}} \Psi^{\sigma}_{\nu}(\square)}{q^{N|\lambda|}t^{|\lambda|} t^{N|\mu|}t^{|\mu|}}= \frac{ t^{\frac{-|\lambda|}{2}}q^{\frac{-|\mu|}{2}}\displaystyle \sum_{\eta}\big(\frac{q}{t}\big )^{\frac{|\eta|}{2}}s_{\lambda/\eta}(t^{-\rho}q^{-\nu^t})s_{\mu^t/\eta}(q^{-\rho}t^{-\nu})}{\displaystyle\prod_{i,j\geq 0}(1-q^it^{j+1})\displaystyle\prod_{\square\in\nu}( 1-q^{l(\square)}t^{a(\square)+1}) }
.\end{align*}
Here $t^{-\rho}$ denotes the sequence $$(t^{1/2},t^{3/2},t^{5/2},\cdots);$$ the symbol $t^{-\lambda}$ denotes the sequence $(t^{-\lambda_1},t^{-\lambda_2},\cdots,)$ and products of sequences are taken termwise. The symbols $s_{\lambda/\eta}$ denote skew Schur polynomials; see, for example, \cite[App. A]{IKP} for a concise introduction.

  After incorporating the remaining factor $$t^{-\frac{||\lambda^t||^2-|\lambda|}{2}}q^{-\frac{||\mu||^2-|\mu|}{2}},$$ we deduce that for $r_1\gg r_3>0\gg r_2$, we have $$ \hat{V}(\lambda,\mu,\nu)(t_1,t_2,t_3)^{\sigma(r_1,r_2,r_3)}=\frac{ t^{\frac{-||\lambda^t||^2}{2}}q^{-\frac{||\mu||^2}{2}}\displaystyle\sum_{\eta}\Big(\frac{q}{t}\Big)^{\frac{|\eta|}{2}}s_{\lambda/\eta}(t^{-\rho}q^{-\nu^t})s_{\mu^t/\eta}(q^{-\rho}t^{-\nu})}{\displaystyle\prod_{i,j\geq 0}(1-q^it^{j+1}) \displaystyle\prod_{\square\in\nu} (1-q^{l(\square)}t^{a(\square)+1}) } 
.$$

In particular, we have \begin{align}\label{deg0}\hat{V}(\emptyset,\emptyset,\emptyset)(t_1,t_2,t_3)^{\sigma(r_1,r_2,r_3)}=\frac{1}{\prod_{i,j\geq 0}(1-q^it^{j+1})};
\end{align} Note that this equality also follows directly from Nekrasov's formula  (\ref{nek}). 

So, we set $$C(\lambda,\mu,\nu)(t,q)= \frac{ t^{-\frac{||\lambda^t||^2}{2}}q^{-\frac{||\mu||^2}{2}}}{\displaystyle \prod_{\square\in\nu} (1-q^{l(\square)}t^{a(\square)+1}) } 
\sum_{\eta}\Big(\frac{q}{t}\Big)^{\frac{|\eta|}{2}}s_{\lambda/\eta}(t^{-\rho}q^{-\nu^t})s_{\mu^t/\eta}(q^{-\rho}t^{-\nu}).$$ 

Up to a prefactor that is important for the computations in the following section, the series $C(\lambda,\mu,\nu)$ is equal to the refined topological vertex used in \cite{IKV}. 

The limits of $\hat{V}(\lambda,\mu,\nu)$ under other preferred slopes can be determined similarly, or using symmetries of the vertex. To be precise, suppose that $\sigma(r_1,r_2,r_3)$ and $\sigma(s_1,s_2,s_3)$ are generic slopes with $r_1\gg r_3 > 0 \gg r_2$ and $s_2\gg s_3>0\gg s_1.$ Then, \begin{align*}&\ind_{\sigma(s_1,s_2,s_3)}(W_{\pi}(t_1,t_2,t_3)-\kappa\cdot W^{\vee}_{\pi}(t_1,t_2,t_3))\\&=\ind_{\sigma(r_1,r_2,r_3)}(W_{\pi}(t_2,t_1,t_3)-\kappa\cdot W^{\vee}_{\pi}(t_2,t_1,t_3)).\end{align*} It follows from the definition (\ref{Wchar}) that $$W_{\pi}(t_2,t_1,t_3)=W_{\mathring{\pi}}(t_1,t_2,t_3),$$ where $$\mathring{\pi}=\{(b_2,b_1,b_3) | (b_1,b_2,b_3)\in \pi\}$$ is the partition obtained by reflecting $\pi$ in the $x_1=x_2$ plane. Hence $$\ind_{\sigma(s_1,s_2,s_3)}(V_{\pi}(t_1,t_2,t_3))=\ind_{\sigma(r_1,r_2,r_3)}(V_{\mathring{\pi}}(t_1,t_2,t_3)).$$

As $$\mathring{\pi}^{(1)}=(\pi^{(2)})^{t}, \mathring{\pi}^{(2)}=(\pi^{(1)})^{t}, \mathring{\pi}^{(3)}=(\pi^{(3)})^{t},$$ we conclude that \begin{align}\label{slopewall}\hat{V}(\lambda,\mu,\nu)(t_1,t_2,t_3)^{\sigma(s_1,s_2,s_3)}=\hat{V}(\mu^t,\lambda^t,\nu^t)(t_1,t_2,t_3)^{\sigma(r_1,r_2,r_3)}.\end{align}

Finally, for any generic slope $\sigma(r_1,r_2,r_3)$ one has \begin{align}\label{sloperfn}\ind_{\sigma(r_1,r_2,r_3)}(V_{\pi}(t_1,t_2,t_3))=-\ind_{\sigma(-r_1,-r_2,-r_3)}(V_{\pi}(t_1,t_2,t_3)).\end{align}

From (\ref{slopewall}) and (\ref{sloperfn}), we obtain the following. 

\begin{proposition}\label{indref}
The limits of the normalized vertex contribution $\hat{V}'(\lambda,\mu,\nu)$ under preferred slopes $\sigma$ are as follows.

$$\begin{tabu}{c|c}
\sigma=\sigma(r_1,r_2,r_3) & \hat{V}'(\lambda,\mu,\nu)(t_1,t_2,t_3)^{\sigma}\\
\hline
 
r_1\gg r_3>0\gg r_2 & C(\lambda,\mu,\nu)(t,q) \\
\hline
r_1\gg 0>r_3\gg r_2 & C(\mu^t,\lambda^t,\nu^t)(q,t) \\
\hline
r_2\gg r_3>0\gg r_1 & C(\mu^t,\lambda^t,\nu^t)(t,q) \\
\hline
r_2\gg 0>r_3\gg r_1 & C(\lambda,\mu,\nu)(q,t)
\end{tabu}$$
\end{proposition}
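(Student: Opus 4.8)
The plan is to obtain all four rows from the first one, which has already been established: the calculation preceding the statement (ending with the definition of $C(\lambda,\mu,\nu)(t,q)$) shows that for a slope $\sigma(r_1,r_2,r_3)$ with $r_1\gg r_3>0\gg r_2$ one has $\hat{V}(\lambda,\mu,\nu)(t_1,t_2,t_3)^{\sigma}=C(\lambda,\mu,\nu)(t,q)\cdot\hat{V}(\emptyset,\emptyset,\emptyset)(t_1,t_2,t_3)^{\sigma}$, hence $\hat{V}'(\lambda,\mu,\nu)^{\sigma}=C(\lambda,\mu,\nu)(t,q)$. The remaining three cases follow by feeding this into the symmetry relations (\ref{slopewall}) and (\ref{sloperfn}), once one observes how the normalizing denominator behaves: $\hat{V}(\emptyset,\emptyset,\emptyset)^{\sigma}$ is itself a value of $\hat{V}$, namely at $\lambda=\mu=\nu=\emptyset$, so it is fixed by the substitution $(\lambda,\mu,\nu)\mapsto(\mu^t,\lambda^t,\nu^t)$ (because $\emptyset^t=\emptyset$) and it obeys the same $t\leftrightarrow q$ symmetry used below.

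For the third row I would apply (\ref{slopewall}) directly. If $\sigma(r_1,r_2,r_3)$ is a slope with $r_2\gg r_3>0\gg r_1$ and $\sigma'$ is a slope of the first type, then (\ref{slopewall}) gives $\hat{V}(\lambda,\mu,\nu)^{\sigma}=\hat{V}(\mu^t,\lambda^t,\nu^t)^{\sigma'}$; dividing both sides by $\hat{V}(\emptyset,\emptyset,\emptyset)$ (which is unchanged under the transposition) yields $\hat{V}'(\lambda,\mu,\nu)^{\sigma}=\hat{V}'(\mu^t,\lambda^t,\nu^t)^{\sigma'}=C(\mu^t,\lambda^t,\nu^t)(t,q)$, the third entry.

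For the second and fourth rows I would use (\ref{sloperfn}) together with the identity $\hat{a}(V)^{\sigma}=(-\kappa^{1/2})^{\ind_{\sigma}(V)}$. Since $(-\kappa^{1/2})^{-1}=-\kappa^{-1/2}$, relation (\ref{sloperfn}) says that $\hat{a}(V_{\pi})^{\sigma(-r_1,-r_2,-r_3)}$ is obtained from $\hat{a}(V_{\pi})^{\sigma(r_1,r_2,r_3)}$ by the ring automorphism $\kappa^{1/2}\mapsto\kappa^{-1/2}$ fixing $Q$; in the variables $t=-Q\kappa^{1/2}$, $q=-Q\kappa^{-1/2}$ this automorphism is precisely the exchange $t\leftrightarrow q$, while $Q^{\chi(\pi)}$ is unaffected. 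Summing over $\pi\in P(\lambda,\mu,\nu)$ term by term — the sum stabilizing for every preferred slope by the same argument used in the first case — and dividing by $\hat{V}(\emptyset,\emptyset,\emptyset)$, I get $\hat{V}'(\lambda,\mu,\nu)^{\sigma(-r_1,-r_2,-r_3)}=\hat{V}'(\lambda,\mu,\nu)^{\sigma(r_1,r_2,r_3)}\big|_{t\leftrightarrow q}$. It then remains to match up regions: the negation of a slope with $r_2\gg 0>r_3\gg r_1$ lies in the first region, so its limit is $C(\lambda,\mu,\nu)(q,t)$ — the fourth row; the negation of a slope with $r_1\gg 0>r_3\gg r_2$ lies in the third region, so, combining with the previous paragraph, its limit is $C(\mu^t,\lambda^t,\nu^t)(q,t)$ — the second row.

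The one place that needs care is the bookkeeping: verifying that negating (and, for (\ref{slopewall}), transposing and reflecting) each relevant strict ordering of $r_1,r_2,r_3$ produces the region claimed, and tracking the signs in $t=-Q\kappa^{1/2}$, $q=-Q\kappa^{-1/2}$ carefully enough to see that ``$\ind_{\sigma}\mapsto-\ind_{\sigma}$'' translates exactly into ``$t\leftrightarrow q$'' on the monomials $Q^{\chi(\pi)}\hat{a}(V_{\pi})^{\sigma}$ (this rests on $Q$ occurring only to integer powers in $\Gamma$, which forces the relevant exponents to be integers). No further analytic input is required beyond what is assembled in the preceding discussion.
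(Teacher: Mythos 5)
Your argument is correct and is exactly the route the paper takes: the first row is the explicit computation culminating in the definition of $C(\lambda,\mu,\nu)(t,q)$, the third row follows from (\ref{slopewall}), and the second and fourth follow from (\ref{sloperfn}) read as the exchange $t\leftrightarrow q$ (equivalently $\kappa^{1/2}\leftrightarrow\kappa^{-1/2}$ fixing $Q$), applied to numerator and denominator alike so that it descends to $\hat{V}'$. Your region bookkeeping (negating $r_2\gg 0>r_3\gg r_1$ lands in row one's region, negating $r_1\gg 0>r_3\gg r_2$ lands in row three's) matches the table.
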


Note that there are four different choices of preferred slope corresponding to a given choice of ``preferred direction'' for the refined topological vertex.

\section{Dualities for tautological classes on $(\C^2)^{[n]}$ }\label{threefold}
In this section, we prove Theorem \ref{symmetry}.

\subsection{Two toric geometries}

Let $X_1$ and $X_2$ be the smooth toric Calabi-Yau threefolds whose toric diagrams are those in Figure \ref{weights}, with the action of a three-dimensional torus $T=\mathrm{diag}(t_1,t_2,t_3)$ as indicated.

The geometry of $X_1$ and $X_2$ can be read off from the toric skeleta. The threefold $X_1$ has 8 torus fixed points and 8 torus invariant curves, while $X_2$ has 4 torus fixed points and 3 torus invariant curves. For both threefolds, the invariant curves corresponding to vertical and horizontal line segments in Figure \ref{weights} have normal bundle $\mathcal{O}\oplus \mathcal{O}(-2)$, while the remaining invariant curves (those corresponding to line segments at $45^{\circ}$ angles to the edges of the page) have normal bundle $\mathcal{O}(-1)\oplus \mathcal{O}(-1)$. The $T$-weights can be determined by first selecting a single toric chart where $T$ scales the coordinate axes by $t_1,t_2$ and $t_3,$ and using (\ref{weightshift}) to determine the weights in all other charts.

The semigroups $H_2(X_1,\Z)^{\mathrm{eff}}$ and $H_2(X_2,\Z)^{\mathrm{eff}}$ are generated by the classes $\beta$ of the torus invariant curves. The only relations in $H_2(X_1,\Z)^{\mathrm{eff}}$ are that the classes of the invariant curves given by the two vertical line segments are equal, and that the classes of the curves given by the two horizontal line segments are equal. There are no relations between classes of invariant curves in $H_2(X_2,\Z)^{\mathrm{eff}}$. In Figure \ref{kahs}, we label each torus invariant curve $C_i$ with a parameter that denotes, in the sense of (\ref{dtpar}), the variable $u^{[C_i]}$; note, in particular, that in the diagram for $X_1$, homologous curves are assigned the same parameters. These parameters are called K\"{a}hler parameters.

See Section \ref{Xorigin} for a brief explanation as to why the geometries $X_1$ and $X_2$ are selected.


\begin{figure}[htb] 
\begin{subfigure}{1in}
\scalebox{.3}{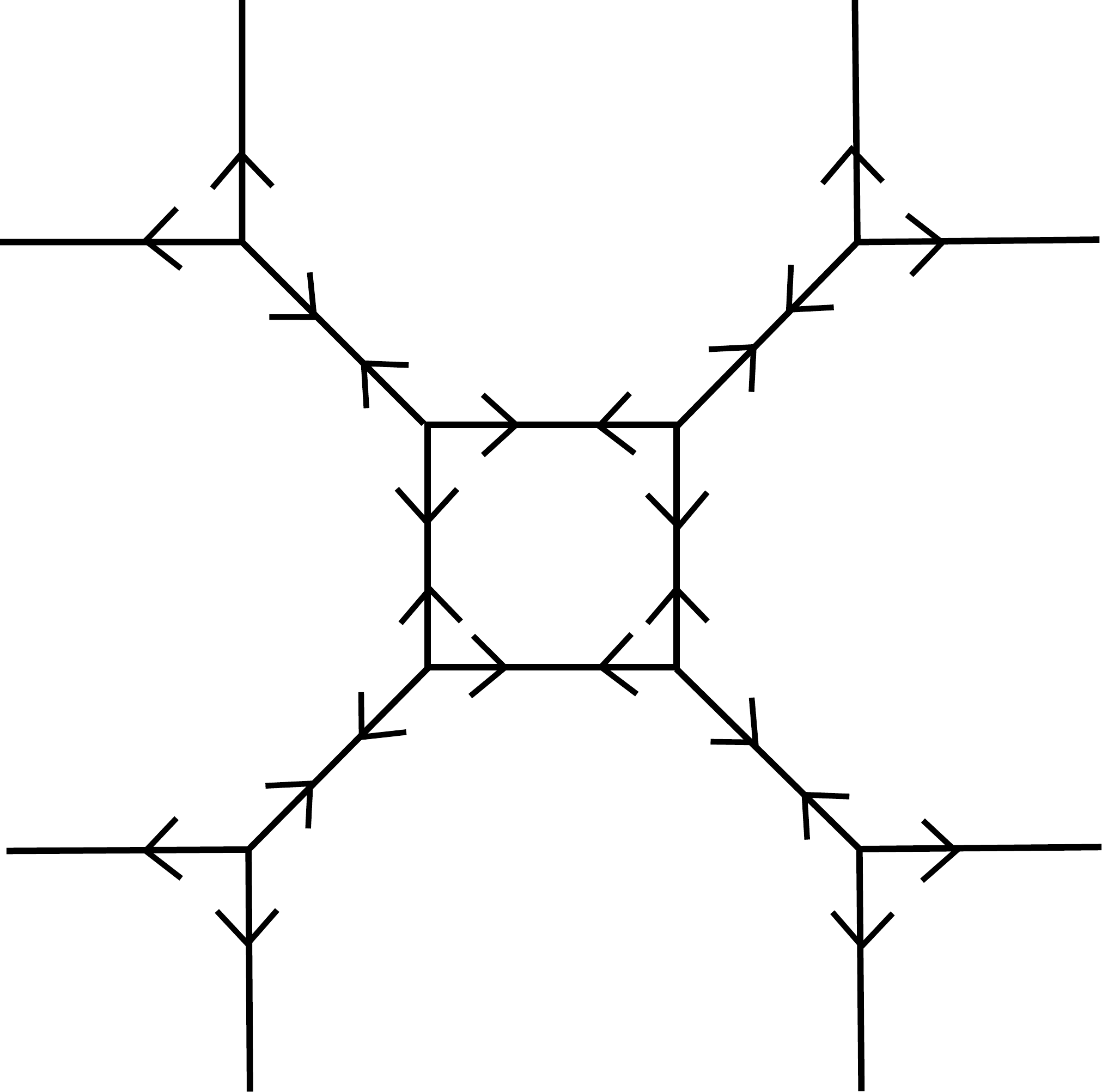}
\end{subfigure}
\hspace{2in} 
\begin{subfigure}{1in}
\scalebox{.3}{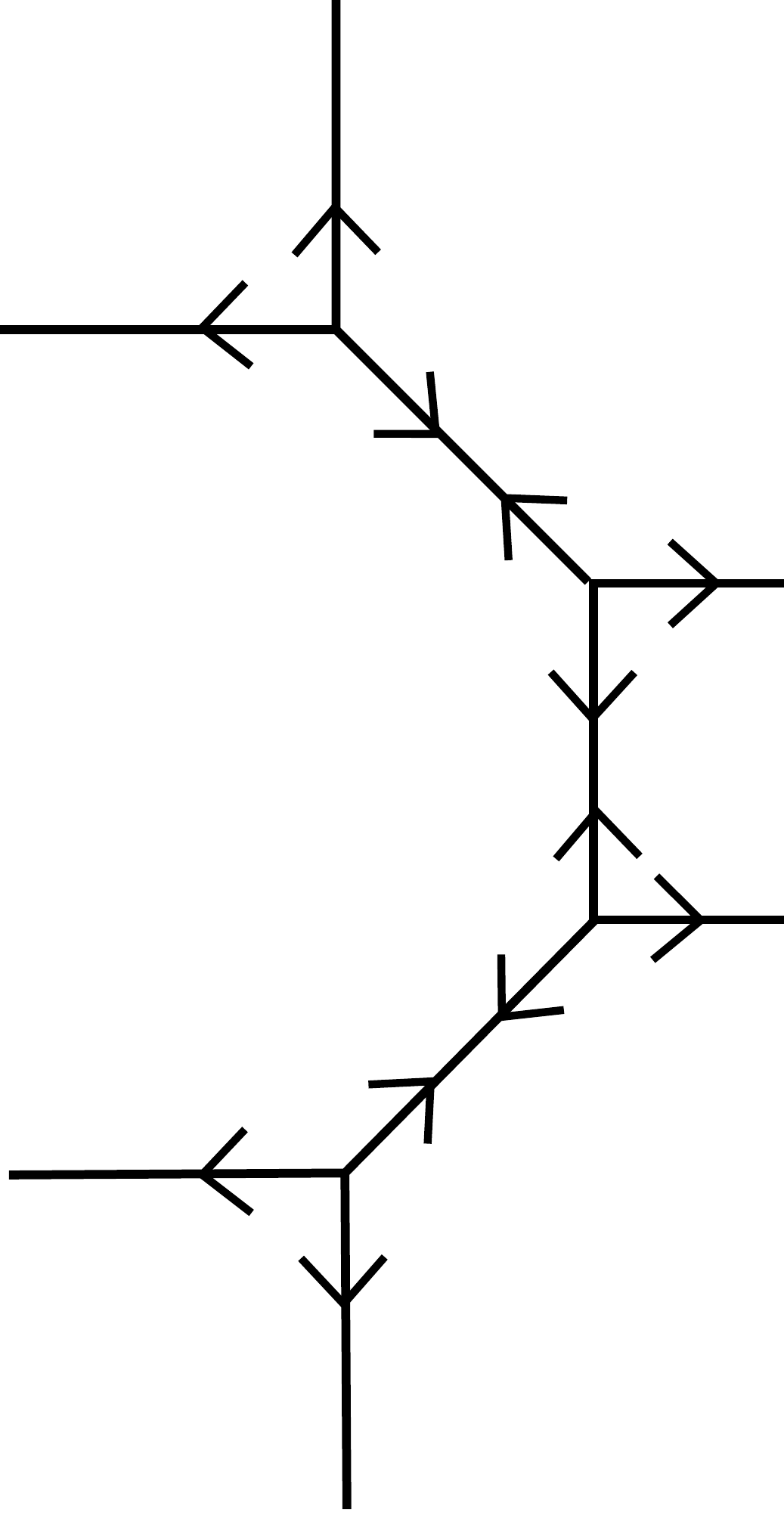}
\end{subfigure}
 \caption{The Calabi-Yau threefolds $X_1$ and $X_2$, respectively, with $T$-weights labeled} \label{weights}
\end{figure}

\begin{figure}[bth] 
\begin{subfigure}{1in}
\scalebox{.3}{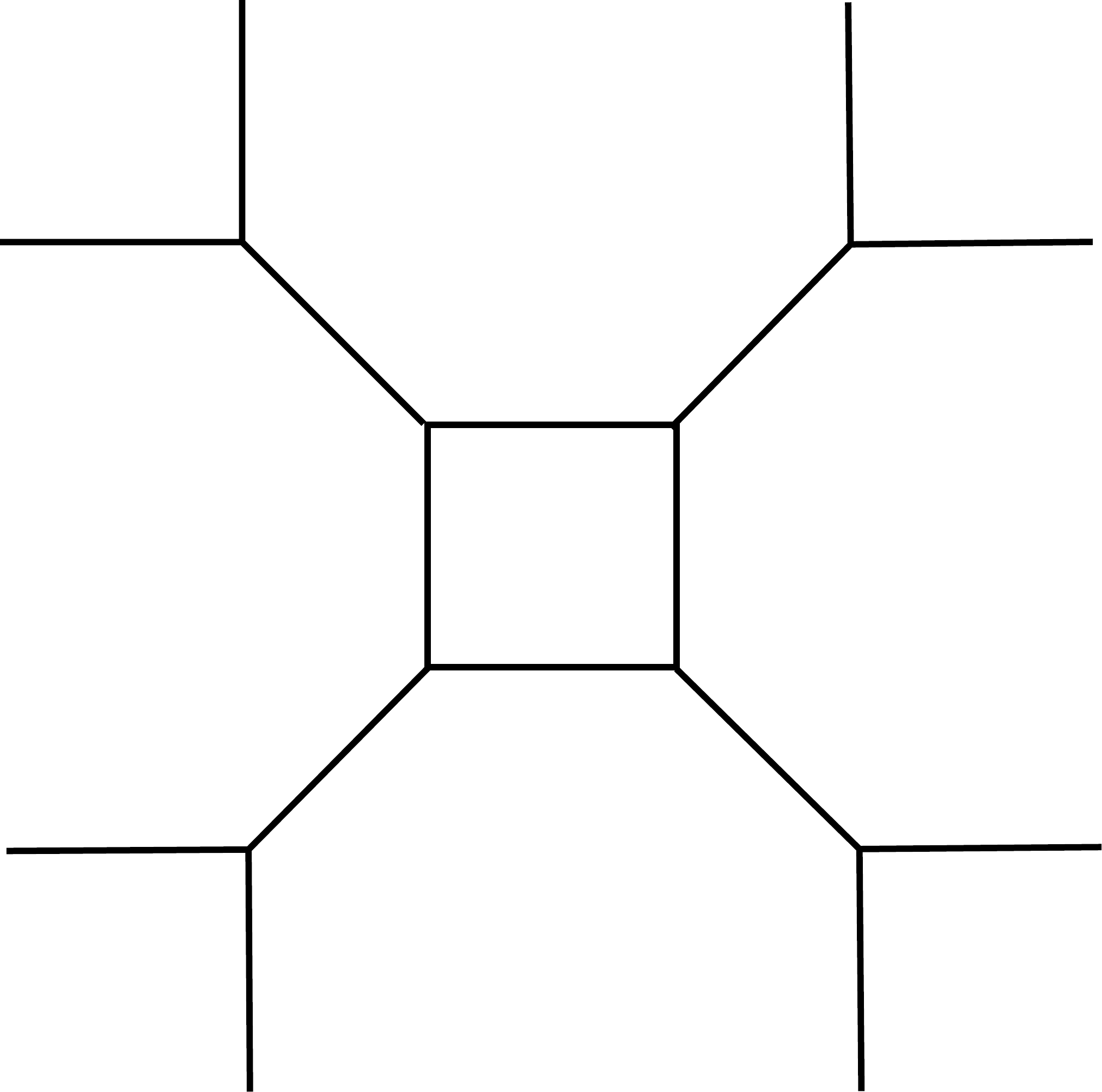}
\end{subfigure}
\hspace{2in} 
\begin{subfigure}{1in}
\scalebox{.3}{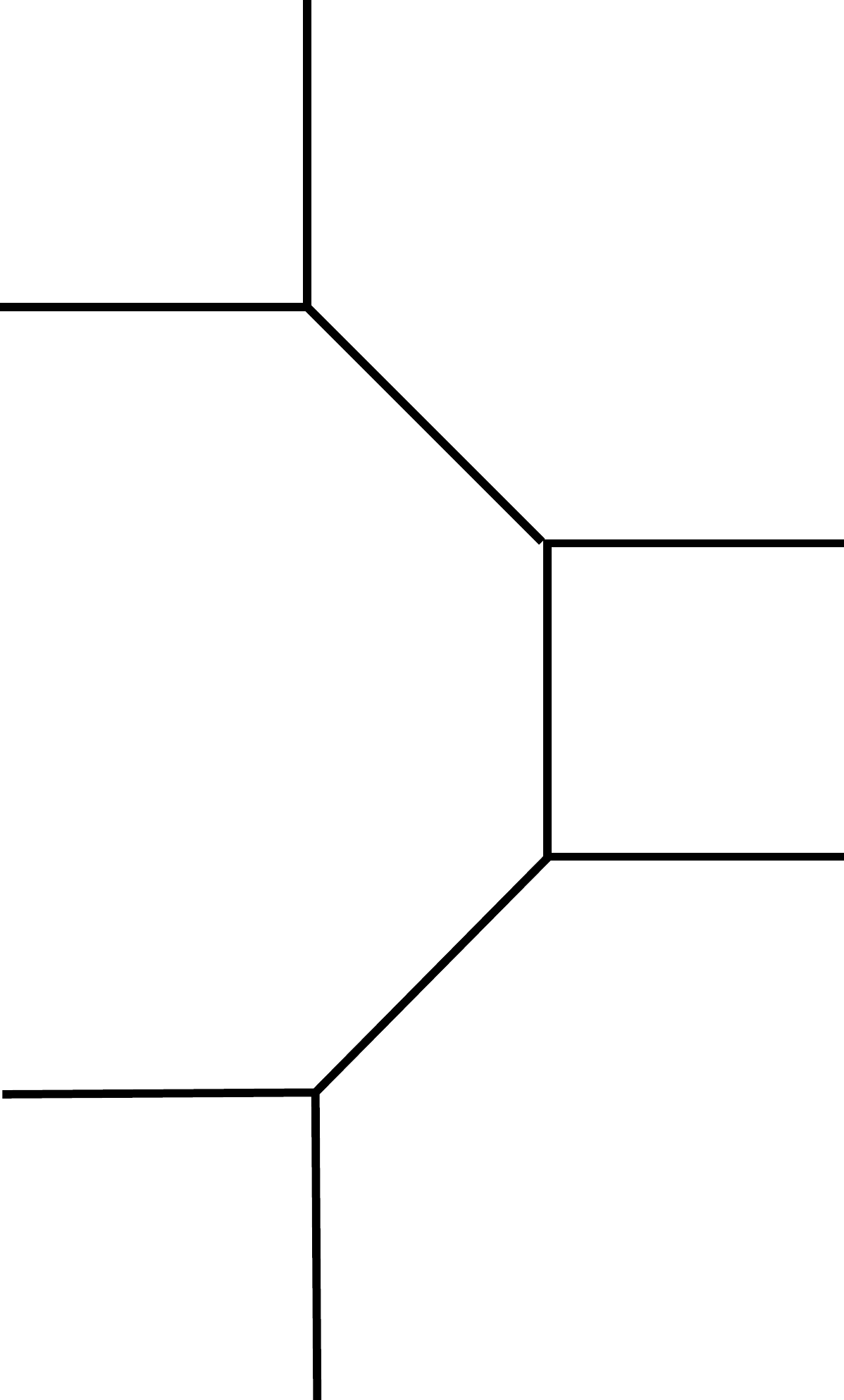}
\end{subfigure}
 \caption{The Calabi-Yau threefolds $X_1$ and $X_2$, respectively, with K\"{a}hler parameters labeled} \label{kahs}
\end{figure}

\subsection{Equality of limits}

Let $X$ denote either of $X_1$ or $X_2$. As before, let $\tilde{T}$ be the double cover of $T$ on which the character $(t_1t_2t_3)^{1/2}$ is defined and let $A\subset T$ be the Calabi-Yau subtorus $\ker(\kappa)$; then $\tilde{T}$ acts on $X$ via the map $\tilde{T}\to T$. 
For these two geometries, the possible non-compact directions of $X$ are those pointing vertically or horizontally in Figure \ref{weights}; these are all of the form $$t_1^i(t_1t_2t_3)^{j/2},\ t_2^{i}(t_1t_2t_3)^{j/2}.$$

Let $M$ be a component $DT(X,\beta,n)$ of $DT(X)$. By (\ref{rigidity}), the function $\chi(M,\tilde{\mathcal{O}}^{\vir})$ is $A$-balanced. 

Let $\sigma(r_1,r_2,r_3)$ be the one-parameter subgroup $\C^{\times}\to A$ given by $$z\mapsto (z^{r_1},z^{r_2},z^{r_3}).$$  The weight $t_1^i(t_1t_2t_3)^{j/2}$ is attracting with respect to $\sigma(r_1,r_2,r_3)$ if $i\cdot r_1>0$ and repelling if $i\cdot r_1<0$, while the weight $t_2^i(t_1t_2t_3)^{j/2}$ is attracting with respect to $\sigma$ if $i\cdot r_2>0$ and repelling if $i\cdot r_2<0$. We conclude that the attracting/repelling behavior of the noncompact directions of $X$ depends only on the signs of $r_1$ and  $r_2$.  Hence, for $M$-generic $\sigma(r_1,r_2,r_3),$  Theorem \ref{independence} implies that the value of the limit
$$\chi(M,\tilde{\mathcal{O}}^{\vir})^{\sigma(r_1,r_2,r_3)}$$ depends only on the signs of $r_1$ and $r_2$. 

The same result holds when $DT(X)$ is replaced by the Hilbert scheme of points $DT_0(X)$; this can also be seen directly from Nekrasov's formula, or its corollary (\ref{deg0}). We conclude the following.

\begin{corollary}\label{eqlim}
For $r_3\gg r_2>0\gg r_1$ and $s_2\gg 0>s_1\gg s_3,$ we have \begin{align}\label{equ}Z'_{DT}(X)^{\sigma(r_1,r_2,r_3)}=Z'_{DT}(X)^{\sigma(s_1,s_2,s_3)} \in \Q[\Gamma, u^{\beta}].
\end{align}
\end{corollary}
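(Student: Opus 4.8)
The plan is to deduce the corollary from Theorem~\ref{independence} applied one $Q$-degree (and $u^{\beta}$-degree) at a time. Recall that by definition
$$Z_{DT}(X)=\sum_{\beta,n}Q^nu^{\beta}\,\chi\big(DT(X,\beta,n),\tilde{\mathcal{O}}^{\vir}\big),\qquad Z'_{DT}(X)=\frac{Z_{DT}(X)}{D},\qquad D:=\sum_{n}Q^n\chi\big(DT(X,0,n),\tilde{\mathcal{O}}^{\vir}\big),$$
and that by~(\ref{rigidity}) every coefficient $\chi(DT(X,\beta,n),\tilde{\mathcal{O}}^{\vir})$ is $A$-balanced, so all of the slope limits below exist. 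Since $DT(X,0,n)=\emptyset$ for $n<0$ and the $n=0$ summand of $D$ equals $1$, the series $D$ is a unit in $\Q(t_1,t_2,t_3,\kappa^{1/2})[[Q]]$; because $A$-balanced rational functions are closed under addition and multiplication and $f\mapsto\lim_{z\to 0}f(\sigma(z)\bar t)$ is additive and multiplicative wherever it is defined, passing to the $\sigma$-limit commutes both with the formation of $Z_{DT}(X)$ (carried out coefficientwise in $Q$ and $u^{\beta}$, each coefficient being a finite sum over $DT(X,\beta,n)^T$) and with division by $D$. Hence it suffices to prove, for every $\beta$ and every $n$ (the case $\beta=0$ included),
$$\chi\big(DT(X,\beta,n),\tilde{\mathcal{O}}^{\vir}\big)^{\sigma(r_1,r_2,r_3)}=\chi\big(DT(X,\beta,n),\tilde{\mathcal{O}}^{\vir}\big)^{\sigma(s_1,s_2,s_3)}.$$

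Next I would check that the two slopes meet the hypotheses of Theorem~\ref{independence}. As recalled just before the statement, every non-compact direction of $X$ is of the form $t_1^{i}\kappa^{j/2}$ or $t_2^{i}\kappa^{j/2}$; restricted to the Calabi--Yau torus $A=\ker(\kappa)$, such a weight pulls back under $\sigma(r_1,r_2,r_3)$ to $z^{ir_1}$ (respectively $z^{ir_2}$) times a character of $A$. In the chamber $r_3\gg r_2>0\gg r_1$ one has $\sgn(r_1)=-1$ and $\sgn(r_2)=+1$, and in the chamber $s_2\gg 0>s_1\gg s_3$ one has $\sgn(s_1)=-1$ and $\sgn(s_2)=+1$; in particular no relevant exponent $ir_1$ or $ir_2$ vanishes, so both slopes are $M$-generic for every component $M$. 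Moreover $t_1^{i}\kappa^{j/2}$ is attracting for $\sigma(r_1,r_2,r_3)$ exactly when $ir_1>0$, and likewise with $t_2,r_2$ in place of $t_1,r_1$; since $\sgn(r_1)=\sgn(s_1)$ and $\sgn(r_2)=\sgn(s_2)$, the two slopes assign the same attracting/repelling behavior to every non-compact direction of $X$.

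With these checks in place, Theorem~\ref{independence}, applied with $M=DT(X,\beta,n)$ for each $\beta$ and $n$ and also with $M=DT(X,0,n)$, gives the desired termwise equality; summing and dividing by $D$ yields $Z'_{DT}(X)^{\sigma(r_1,r_2,r_3)}=Z'_{DT}(X)^{\sigma(s_1,s_2,s_3)}$, and the membership in $\Q[\Gamma,u^{\beta}]$ follows from~(\ref{livesinG}) together with the fact that $D^{\sigma}\in 1+Q\Gamma$ is a unit in $\Gamma$. I do not expect a genuine obstacle here: the argument is essentially bookkeeping around Theorem~\ref{independence}. The two points that require a moment's care are the commutation of the slope limit with the formal sum over $(\beta,n)$ and with the normalization by $D$ — immediate because everything is organized by $Q$- and $u^{\beta}$-degree, each degree involves only finitely many fixed points, and $D$ is a unit with constant term $1$ — and the verification that the two named chambers of one-parameter subgroups lie on the same side of every wall cut out by a non-compact direction, which is precisely the sign computation carried out above.
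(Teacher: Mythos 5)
Your proposal is correct and follows essentially the same route as the paper: identify the non-compact directions of $X$ as weights of the form $t_1^{i}\kappa^{j/2}$ or $t_2^{i}\kappa^{j/2}$, observe that their attracting/repelling behavior under $\sigma(r_1,r_2,r_3)$ depends only on the signs of $r_1$ and $r_2$ (which agree between the two named chambers), and invoke Theorem \ref{independence} componentwise, treating the normalization by the degree-$0$ part separately. Your explicit bookkeeping about commuting the limit with the formal sum and with division by $D$ is slightly more detailed than the paper's, which simply notes that the same slope-independence holds for $DT_0(X)$ (as is also visible from Nekrasov's formula), but the substance is identical.
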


Here, the integral domain $\Q[\Gamma,u^{\beta}]$ consists of power series in the K\"{a}hler variables with coefficients in $\Gamma$. In Section \ref{eval} we will also write certain elements of $\Gamma$ as rational functions in $q^{1/2}$ and $t^{1/2};$ the rational functions we write can be identified with elements of $\Gamma$ by expanding in positive powers of $q$ and $t$.


While sufficient to establish our results for Hilbert schemes, we remark that our notion of non-compact weight does not produce the strongest expected results of the form of Corollary \ref{eqlim}. Namely, if one assumes the K-theoretic DT/stable pairs correspondence (\cite[(16)]{NO}), then non-compact weights should be replaced by weights corresponding to directions in which curves in $X$ may escape to infinity. Such a reformulation would imply stronger versions of Corollary \ref{eqlim} for some threefolds not considered in this paper. For example, the space of effective curves in $\mathcal{K}_{\P^1\times\P^1}$ is compact; the K-theoretic DT/stable pairs correspondence would then imply that, for generic $\sigma$, the limit $Z'_{DT}(\mathcal{K}_{\P^1\times\P^1})^{\sigma}$ does not depend on $\sigma$.

\subsection{Evaluation of limits}\label{eval}

To apply Corollary \ref{eqlim}, we evaluate the limits (\ref{equ}).

We begin by dividing the toric 1-skeleton of $X_1$ into 8 trivalent vertices and 8 bounded edges. Propositions \ref{edge1}, \ref{edge2}, and \ref{indref} enable us to evaluate the corresponding limits of (\ref{redpart}). Choose the counter-clockwise orientation for the 1-skeleton of $\Delta(X_1)$. This means that the edges corresponding to the first, second, and third coordinates in a toric chart centered at a vertex lie in a counter-clockwise direction around the vertex.  Let \begin{align}\label{2d} \lambda_1,\ \lambda_2,\ \mu_1,\ \mu_2,\ \kappa_1,\ \kappa_2,\ \kappa_3,\ \kappa_4
\end{align} be two-dimensional partitions assigned to the bounded edges as indicated in Figure \ref{fig:CYedge1}. 

\begin{figure}[htb]
 \scalebox{.25}{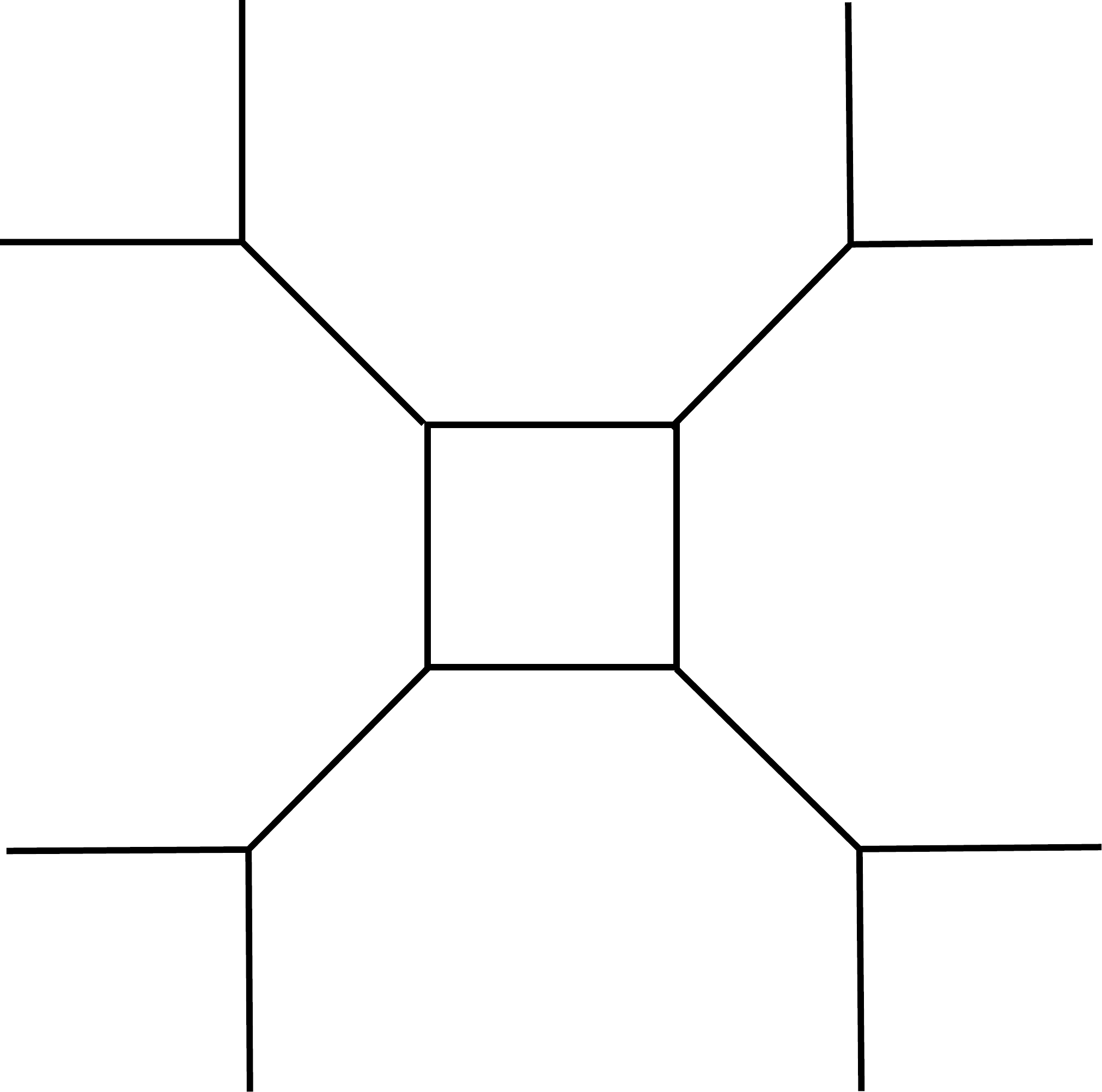}
\caption{The Calabi-Yau threefold $X_1$ with two-dimensional partitions assigned to edges}
\label{fig:CYedge1}
\end{figure}

Then, for $\sigma(r_1,r_2,r_3)$ with $r_3\gg r_2>0\gg r_1$, the corresponding contribution to the limit $Z'_{DT}(X_1)^{\sigma(r_1,r_2,r_3)}$ is the product of sixteen terms, one for each vertex, and one for each edge. Reading the diagram left to right, then top to bottom, the the edge terms are

\begin{align*}
 &m_1^{|\kappa_1|} t^{\frac{||\kappa_1^t ||^2}{2}}q^{\frac{||\kappa_1||^2}{2}}, \\ & m_4^{|\kappa_4|}t^{\frac{||\kappa_4||^2}{2}}q^{\frac{||\kappa_4^t||^2}{2}}, \\
 &v^{|\mu_1|} q^{||\mu_1||^2}, \\
 &u^{|\lambda_1|} t^{\frac{||\lambda_1||^2+|\lambda_1|}{2}}q^{\frac{||\lambda_1||^2-|\lambda_1|}{2}},\\ &u^{|\lambda_2|} t^{\frac{||\lambda_2||^2-|\lambda_2|}{2}}q^{\frac{||\lambda_2||^2+|\lambda_2|}{2}},\\
  &v^{|\mu_2|}t^{||\mu_2||^2},\\
 &m_2^{|\kappa_2|} t^{\frac{||\kappa_2^t||^2}{2}}q^{\frac{||\kappa_2||^2}{2}}, \\
 & m_3^{|\kappa_3|}t^{\frac{||\kappa_3 ||^2}{2}}q^{\frac{||\kappa_3^t||^2}{2}}, \\
\end{align*}

while the vertex terms are
\begin{align*}
&C(\kappa_1^t,\emptyset,\emptyset)(q,t)=q^{-\frac{||\kappa_1||^2}{2}}s_{\kappa_1^t}(q^{-\rho}), \\
 &C(\emptyset, \kappa_4^t,\emptyset)(t,q)=q^{-\frac{||\kappa_4^t||^2}{2}}s_{\kappa_4}(q^{-\rho}), \\
 &C(\kappa_1,\lambda_1,\mu_1^{t})(t,q)=\frac{ t^{\frac{-||\kappa_1^t||^2}{2}}q^{\frac{-||\lambda_1||^2}{2}}}{\displaystyle \prod_{\square\in\mu_1^t} 1-t^{a(\square)+1}q^{l(\square)} } \sum_{\eta_1}\Big(\frac{q}{t}\Big)^{\frac{|\eta_1|}{2}}s_{\kappa_1/\eta_1}(t^{-\rho}q^{-\mu_1})s_{\lambda_1^t/\eta_1}(q^{-\rho}t^{-\mu_1^t}),\\
 &C(\lambda_2^t,\kappa_4,\mu_1)(q,t)= \frac{ q^{\frac{-||\lambda_2||^2}{2}}t^{\frac{-||\kappa_4||^2}{2}}}{\displaystyle \prod_{\square\in\mu_1} 1-q^{a(\square)+1}t^{l(\square)} } \sum_{\eta_4}\Big(\frac{t}{q}\Big)^{\frac{|\eta_4|}{2}}s_{\lambda_2^t/\eta_4}(q^{-\rho}t^{-\mu_1^t})s_{\kappa_4^t/\eta_4}(t^{-\rho}q^{-\mu_1}). \\
 &C(\lambda_1^t,\kappa_2,\mu_2)(t,q)= \frac{ t^{\frac{-||\lambda_1||^2}{2}}q^{\frac{-||\kappa_2||^2}{2}}}{\displaystyle \prod_{\square\in\mu_2} 1-t^{a(\square)+1}q^{l(\square)} }\sum_{\eta_2}\Big(\frac{q}{t}\Big)^{\frac{|\eta_2|}{2}}s_{\lambda_1^t/\eta_2}(t^{-\rho}q^{-\mu_2^t})s_{\kappa_2^t/\eta_2}(q^{-\rho}t^{-\mu_2}), \\
 &C(\kappa_3,\lambda_2,\mu_2^t)(q,t)=\frac{ q^{\frac{-||\kappa_3^t||^2}{2}}t^{\frac{-||\lambda_2||^2}{2}}}{\displaystyle \prod_{\square\in\mu^t_2} 1-q^{a(\square)+1}t^{l(\square)} } \sum_{\eta_3}\Big(\frac{t}{q}\Big)^{\frac{|\eta_3|}{2}}s_{\kappa_3/\eta_3}(q^{-\rho}t^{-\mu_2})s_{\lambda_2^t/\eta_3}(t^{-\rho}q^{-\mu^t_2}), \\
 &C(\emptyset, \kappa_2^t, \emptyset)(q,t)=t^{-\frac{||\kappa_2^t||^2}{2}}s_{\kappa_2}(t^{-\rho}) ,\\
 &C(\kappa_3^t,\emptyset,\emptyset)(t,q)=t^{-\frac{||\kappa_3||^2}{2}}s_{\kappa_3^t}(t^{-\rho}). \\
\end{align*}

The limit  $Z'_{DT}(X_1)^{\sigma(r_1,r_2,r_3)}$ is the sum over all possible choices of the two-dimensional partitions (\ref{2d}) of the product of these 16 terms. As examples, we indicate how the contributions from the edge assigned $\mu_2$ in Figure \ref{fig:CYedge1} is determined from Proposition \ref{edge2} and how the contributions terms from the two vertices on this edge are determined from Proposition \ref{indref}.

\begin{figure}[htb]
\includegraphics[height=1.7in]{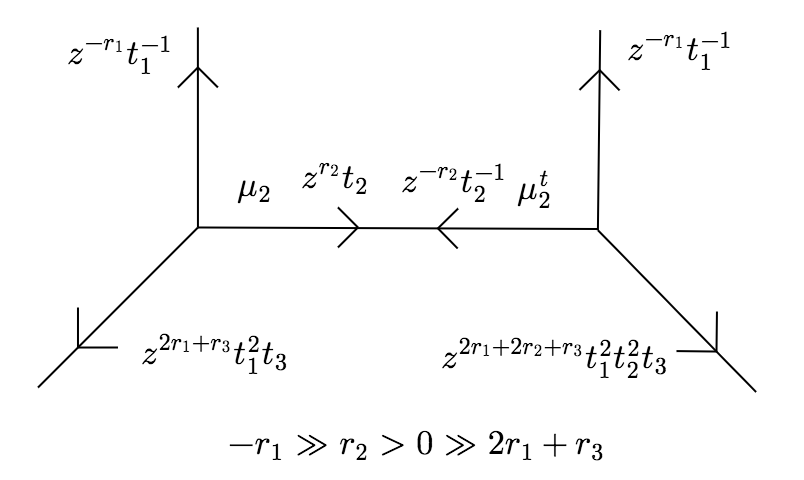}
\caption{The edge of $\Delta(X_1)$ assigned $\mu_2$ with weights at fixed points}
\label{fig:weightededge}
\end{figure}

The weights at the fixed points of the edge assigned $\mu_2$ are recalled in Figure \ref{fig:weightededge}. The corresponding torus invariant curve has normal bundle $\mathcal{O}\oplus \mathcal{O}(-2)$. In particular, at the fixed point on the left of the diagram, the slope $\sigma$ scales the torus-weight in the direction of the torus invariant curve by $z^{r_2}$, scales the fiber of $\mathcal{O}$ by $z^{-r_1}$ and scales the fiber of $\mathcal{O}(-2)$ by $2r_1+r_3$. As indicated in the diagram, we have $$-r_1\gg r_2 > 0 \gg 2r_1+r_3.$$ Comparing with Figure \ref{minus2}, we see that the contribution of this edge to the limit is given by the second row of the table in Proposition \ref{edge2}. So, the edge contributes $v^{|\mu_2|}t^{||\mu_2||^2}.$

\begin{figure}[bth] 
\begin{subfigure}{1in}
\includegraphics[height=1.7in]{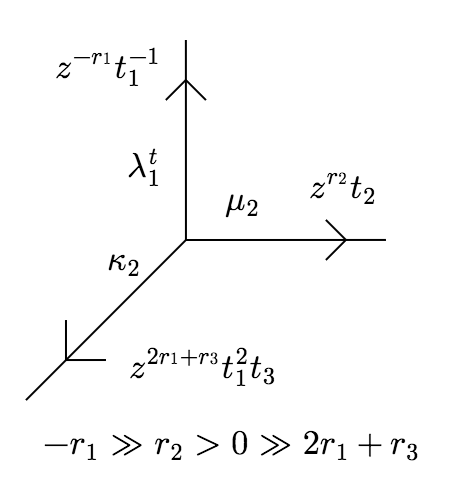}
\end{subfigure}
\hspace{1in} 
\begin{subfigure}{1.5in}
\includegraphics[height=1.7in]{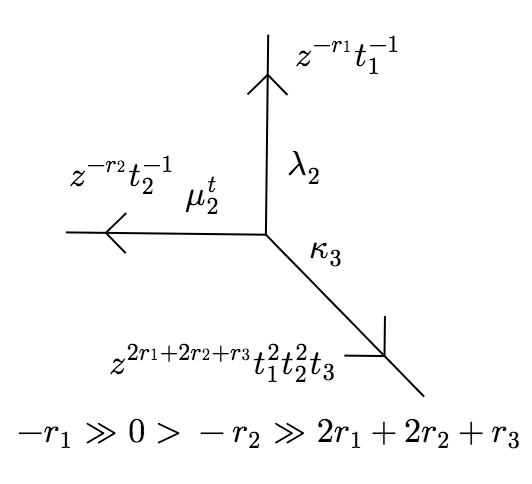}
\end{subfigure}
 \caption{Two vertices of $\Delta(X_1)$ with weights and infinite legs labelled} \label{fig:weightedvertex}
\end{figure}

The vertices are determined similarly. In Figure \ref{fig:weightedvertex}, the weights of the vertices of the edge assigned $\mu_2$ are labelled, along with the two-dimensional partitions describing the infinite legs. Recall from (\ref{comp1}) that, due to the choice of orientation, the two-dimensional partitions corresponding to the two legs (one for each vertex) lying along the bounded edge are related by transposition. Again, the exponents with which $\sigma$ scales the weights at the fixed point on the left of Figure \ref{fig:weightedvertex} are (in an order respecting the orientation) $-r_1, 2r_1+r_3$ and $r_2$. Recall that $$-r_1\gg r_2 > 0 \gg 2r_1+r_3.$$ So, the contribution of this vertex is given by the first row of the table in Proposition \ref{indref} and is therefore $$C(\lambda_1^t, \kappa_2,\mu_2)(t,q).$$ At the fixed point on the right, the slope $\sigma$ scales the weights with exponents $2r_1+r_2+r_3, -r_1,$ and $-r_2$. Again, this order of the exponents is compatible with the orientation. As $$-r_1\gg 0 > -r_2 \gg 2r_1+r_2+r_3,$$ the contribution of this vertex is given by the fourth row of the table in Proposition \ref{indref} and is therefore $$C(\kappa_3,\lambda_2,\mu_2^t)(q,t).$$

The remaining thirteen vertex and edge contributions are determined similarly. We now evaluate $Z'_{DT}(X_1)^{\sigma(r_1,r_2,r_3)}$. Note that there are many cancellations between the edge terms and prefactors in the vertex terms. 

To evaluate the sum over all choices (\ref{2d}), we use the techniques of \cite{IKP} for computing with the topological vertex, which also work for the refined topological vertex. We make repeated use of the following identities involving skew Schur polynomials: \begin{align}\label{schur1}\sum_{\lambda} u^{|\lambda|} s_{\lambda/\eta_1}\big((x_i) \big)s_{\lambda/\eta_2}\big((y_j) \big)=\frac{1}{\displaystyle\prod_{i,j}(1-ux_iy_j)} \sum_{\lambda} u^{|\lambda|} s_{\eta_1/\lambda}\big((uy_j)\big)s_{\eta_2/\lambda}\big((ux_i)\big),\end{align}
\begin{align}\label{schur2}\sum_{\lambda} u^{|\lambda|} s_{\lambda/\eta_1}\big((x_i) \big)s_{\lambda^t/\eta_2}\big((y_j) \big)=\displaystyle\prod_{i,j}(1+ux_iy_j) \sum_{\lambda} u^{|\lambda|} s_{\eta^t_1/\lambda^t}\big((uy_j)\big)s_{\eta_2^t/\lambda}\big((ux_i)\big).\end{align}
these particular identities may be found, for example, in \cite{Taka}.

Adopting notation from \cite{IKP},  for two sequences $x=(x_0,x_1,\cdots), y=(y_0,y_1,\cdots)$ and a formal variable $m$, we set $$\{x,y\}_m=\prod_{i,j\geq 0}(1+mx_iy_j).$$ 

First, use (\ref{schur2}) to sum over the partitions $\kappa_i$ labeling the outer edges, yielding

$$\sum_{\kappa_1}m_1^{|\kappa_1|}s_{\kappa_1^t}(q^{-\rho})s_{\kappa_1/\eta_1}(t^{-\rho}q^{-\mu_1})=\{t^{-\rho}q^{-\mu_1},q^{-\rho}\}_{m_1}\cdot s_{\eta_1^t}(m_1q^{-\rho}),$$

$$\sum_{\kappa_2}m_2^{|\kappa_2|}s_{\kappa_2}(t^{-\rho})s_{\kappa_2^t/\eta_2}(q^{-\rho}t^{-\mu_2})=\{q^{-\rho}t^{-\mu_2},t^{-\rho}\}_{m_2}\cdot s_{\eta_2^t}(m_2t^{-\rho}),$$

$$\sum_{\kappa_3}m_3^{|\kappa_3|}s_{\kappa_3}(t^{-\rho})s_{\kappa_3^t/\eta_3}(q^{-\rho}t^{-\mu_2})=\{q^{-\rho}t^{-\mu_2},t^{-\rho}\}_{m_3}\cdot s_{\eta_3^t}(m_3t^{-\rho}),$$

and

$$\sum_{\kappa_4}m_4^{|\kappa_4|}s_{\kappa_4}(q^{-\rho})s_{\kappa_4^t/\eta_4}(t^{-\rho}q^{-\mu_1})=\{t^{-\rho}q^{-\mu_1},q^{-\rho}\}_{m_4}\cdot s_{\eta_4^t}(m_4q^{-\rho}).$$ 

As the skew-Schur polynomial $s_{\lambda/\mu}$ is homogenous of degree $|\lambda|-|\mu|$, we incorporate the $(t/q)^{|\lambda_1|/2}$ and $(q/t)^{|\lambda_2|/2}$ factors arising from the edge contributions and the $(t/q)^{|\eta_1|/2}$ and $(q/t)^{|\eta_3|/2}$ factors arising from vertex contributions into the arguments of the skew-Schur polynomials:

$$ \Big(\frac{t}{q}\Big)^{\frac{\lambda_1}{2}} \Big(\frac{q}{t}\Big)^{\frac{\eta_1}{2}}s_{\lambda_1^t/\eta_1}(q^{-\rho}t^{-\mu_1^t})=s_{\lambda_1^t/\eta_1}\Big(\sqrt{\frac{t}{q}}q^{-\rho}t^{-\mu_1^t}\Big), $$

$$\Big(\frac{q}{t}\Big)^{\frac{\lambda_2}{2}} \Big(\frac{t}{q}\Big)^{\frac{\eta_3}{2}}s_{\lambda_2^t/\eta_3}(t^{-\rho}q^{-\mu_2^t})=s_{\lambda_2^t/\eta_3}\Big(\sqrt{\frac{q}{t}}t^{-\rho}q^{-\mu_2^t}\Big).$$

We use (\ref{schur1}) to sum over the partitions $\lambda_1$ and $\lambda_2$, yielding

\begin{align*} &\sum_{\lambda_1} u^{|\lambda_1|}s_{\lambda_1^{t}/\eta_1}\Big(\sqrt{\frac{t}{q}}q^{-\rho}t^{-\mu_1^t}\Big)s_{\lambda_1^t/\eta_2}(t^{-\rho}q^{-\mu_2^t})\\&=\frac{\displaystyle\sum_{\lambda_1}u^{|\lambda_1|}s_{\eta_2/\lambda_1^t}\Big(u\sqrt{\frac{t}{q}}q^{-\rho}t^{-\mu_1^t}\Big)s_{\eta_1/\lambda_1^t}(ut^{-\rho}q^{-\mu_2^t})}{\Big\{\sqrt{\frac{t}{q}}q^{-\rho}t^{-\mu_1^t},t^{-\rho}q^{-\mu_2^t}\Big\}_{-u}},\end{align*}
and 

\begin{align*}&\sum_{\lambda_2} u^{|\lambda_2|}s_{\lambda_2^{t}/\eta_3}\Big(\sqrt{\frac{q}{t}}t^{-\rho}q^{-\mu_2^t}\Big)s_{\lambda_2^t/\eta_4}(q^{-\rho}t^{-\mu_1^t})\\& =\frac{\displaystyle\sum_{\lambda_2}u^{|\lambda_2|} s_{\eta_4/\lambda_2^t}\Big(u\sqrt{\frac{q}{t}}t^{-\rho}q^{-\mu_2^t}\Big)s_{\eta_3/\lambda_2^t}(uq^{-\rho}t^{-\mu_1^t})}{\Big\{\sqrt{\frac{q}{t}}t^{-\rho}q^{-\mu_2^t},q^{-\rho}t^{-\mu_1^t}\Big\}_{-u}}.\end{align*}

We then use (\ref{schur2}) sum over the ancillary partitions $\eta_i$ associated to each vertex. We have 

\begin{align*} & \sum_{\eta_1} s_{\eta_1^t}(m_1q^{-\rho})s_{\eta_1/\lambda_1^t}(ut^{-\rho}q^{-\mu_2^t})=\{t^{-\rho}q^{-\mu_2^t},q^{-\rho}\}_{m_1u}\cdot s_{\lambda_1}(m_1q^{-\rho}), \\
&\sum_{\eta_2} s_{\eta_2^t}\Big (m_2\sqrt{\frac{q}{t}}t^{-\rho}\Big )s_{\eta_2/\lambda_1^t}\Big (u\sqrt{\frac{t}{q}}q^{-\rho}t^{-\mu_1^t}\Big )=\{q^{-\rho}t^{-\mu_1^t},t^{-\rho}\}_{m_2u}\cdot s_{\lambda_1}\Big(m_2\sqrt{\frac{q}{t}}t^{-\rho}\Big ), \\
&\sum_{\eta_3} s_{\eta_3^t}(m_3t^{-\rho})s_{\eta_3/\lambda_2^t}(uq^{-\rho}t^{-\mu_1^t})=\{q^{-\rho}t^{-\mu_1^t},t^{-\rho}\}_{m_3u}\cdot s_{\lambda_2}(m_3t^{-\rho}), \\
&\sum_{\eta_4} s_{\eta_4^t}\Big (m_4\sqrt{\frac{t}{q}}q^{-\rho}\Big )s_{\eta_4/\lambda_2^t}\Big (u\sqrt{\frac{q}{t}}t^{-\rho}q^{-\mu_2^t}\Big )=\{t^{-\rho}q^{-\mu_2^t},q^{-\rho}\}_{m_4u}\cdot s_{\lambda_2}\Big(m_4\sqrt{\frac{t}{q}}q^{-\rho}\Big).\end{align*}

Finally, we again use (\ref{schur1}) to sum over the $\lambda_i$ yielding $$\sum_{\lambda_1}u^{|\lambda_1|}s_{\lambda_1}(m_1q^{-\rho})s_{\lambda_1}\Big (m_2\sqrt{\frac{q}{t}}t^{-\rho}\Big )=\frac{1}{\Big \{\sqrt{\frac{q}{t}}t^{-\rho},q^{-\rho}\Big \}_{-um_1m_2}}$$ and $$\sum_{\lambda_2}u^{|\lambda_2|}s_{\lambda_2}(m_3t^{-\rho})s_{\lambda_2}\Big (m_4\sqrt{\frac{t}{q}}q^{-\rho}\Big )=\frac{1}{\Big \{\sqrt{\frac{t}{q}}q^{-\rho},t^{-\rho}\Big \}_{-um_3m_4}}.$$

We conclude that for $r_3\gg r_2>0\gg r_1,$ the limit $Z'_{DT}(X_1)^{\sigma(r_1,r_2,r_3)}$ is \begin{align}\label{firstlimit} \sum_{\mu_1,\mu_2}v^{|\mu_1|+|\mu_2|} \frac{q^{||\mu_1||^2}t^{||\mu_2||^2}}{V_T}\frac{V_N}{V_D} \end{align}

where \begin{align*}
V_T =\displaystyle \prod_{\square\in\mu_1} (1-t^{l(\square)}q^{a(\square)+1})(1-t^{l(\square)+1}q^{a(\square)} ) \displaystyle \prod_{\square\in\mu_2} (1-q^{l(\square)}t^{a(\square)+1}) (1-q^{l(\square)+1}t^{a(\square)}),
\end{align*} 
 \begin{align*} V_N =& \{t^{-\rho}q^{-\mu_1},q^{-\rho}\}_{m_1}\{q^{-\rho}t^{-\mu_2},t^{-\rho}\}_{m_2}\{q^{-\rho}t^{-\mu_2},t^{-\rho}\}_{m_3}\{t^{-\rho}q^{-\mu_1},q^{-\rho}\}_{m_4} \\ &\cdot\{t^{-\rho}q^{-\mu_2^t},q^{-\rho}\}_{m_1u}\{q^{-\rho}t^{-\mu_1^t},t^{-\rho}\}_{m_2u}\{q^{-\rho}t^{-\mu_1^t},t^{-\rho}\}_{m_3u}\{t^{-\rho}q^{-\mu_2^t},q^{-\rho}\}_{m_4u}, \end{align*}

\begin{align*} V_D=& \Big \{\sqrt{\frac{t}{q}}q^{-\rho}t^{-\mu_1^t},t^{-\rho}q^{-\mu_2^t}\Big \}_{-u}\Big \{\sqrt{\frac{q}{t}}t^{-\rho}q^{-\mu_2^t},q^{-\rho}t^{-\mu_1^t}\Big\}_{-u}\\ &\cdot\Big \{\sqrt{\frac{q}{t}}t^{-\rho},q^{-\rho}\Big \}_{-um_1m_2}\Big \{\sqrt{\frac{t}{q}}q^{-\rho},t^{-\rho}\Big \}_{-um_3m_4}.\end{align*}

Here, the factors of $V_T$ are expanded in positive powers of $t$ and $q$, and the factors of $V_D$ are expanded in positive powers of the Kahler variables. It can be checked directly from definitions that \begin{align}\label{btop} 
&\{q^{-\rho}t^{-\lambda},t^{-\rho}\}_{m}=\prod_{\square\in\lambda}(1+m\sqrt{\frac{q}{t}}q^{b_1}t^{-b_2})\prod_{i,j\geq 0}(1+mq^{i+1/2}t^{j+1/2}).
\end{align}
All infinite products appearing in (\ref{firstlimit}) are well-defined as power series in the K\"{a}hler variables with coefficients in $\Gamma$.


The limit $Z'_{DT}(X_1)^{\sigma(s_1,s_2,s_3)}$ for $s_2\gg 0>s_1\gg s_3$ can be computed by the same procedure, yielding
\begin{align}\label{secondlimit} \sum_{\lambda_1,\lambda_2} u^{|\lambda_1|+|\lambda_2|}\frac{t^{||\lambda_1||^2}q^{||\lambda_2||^2}}{U_T} \frac{U_N}{U_D}, \end{align} where 

\begin{align*}
U_T = \displaystyle \prod_{\square\in\lambda_1} (1-q^{l(\square)}t^{a(\square)+1})(1-q^{l(\square)+1}t^{a(\square)} ) \displaystyle \prod_{\square\in\lambda_2} (1-t^{l(\square)}q^{a(\square)+1}) (1-t^{l(\square)+1}q^{a(\square)}),
\end{align*} 
\begin{align*}
U_N=& \{q^{-\rho}t^{-\lambda_1},t^{-\rho}\}_{m_1}\{q^{-\rho}t^{-\lambda_1},t^{-\rho}\}_{m_2}\{t^{-\rho}q^{-\lambda_2},q^{-\rho}\}_{m_3}\{t^{-\rho}q^{-\lambda_2},q^{-\rho}\}_{m_4}\\&\cdot \{q^{-\rho}t^{-\lambda_2^t},t^{-\rho}\}_{m_1v}\{q^{-\rho}t^{-\lambda_2^t},t^{-\rho}\}_{m_2v}\{t^{-\rho}q^{-\lambda_1^t},q^{-\rho}\}_{m_3v}\{t^{-\rho}q^{-\lambda_1^t},q^{-\rho}\}_{m_4v},
\end{align*} and
\begin{align*}
U_D = & \Big\{\sqrt{\frac{q}{t}}t^{-\rho}q^{-\lambda_1^t},q^{-\rho}t^{-\lambda_2^t}\Big\}_{-v}\Big\{\sqrt{\frac{t}{q}}q^{-\rho}t^{-\lambda_2^t},t^{-\rho}q^{-\lambda_1^t}\Big\}_{-v} \\& \cdot \Big\{\sqrt{\frac{t}{q}}q^{-\rho},t^{-\rho}\Big\}_{-vm_1m_4}\Big\{\sqrt{\frac{q}{t}}t^{-\rho},q^{-\rho}\Big\}_{-vm_2m_3}.
\end{align*}

Again, all infinite products appearing in (\ref{secondlimit}) are well-defined (as power series in the K\"{a}hler variables with coefficients in $\Gamma$.)

Corollary \ref{eqlim} implies that (\ref{firstlimit}) and (\ref{secondlimit}) are equal. We perform specializations and substitutions to obtain Theorem \ref{symmetry}. 

First, we perform a specialization under which (\ref{firstlimit}) and (\ref{secondlimit}) each become sums over a single partition. Specialize \begin{align}\label{spec} m_4=-\sqrt{\frac{q}{t}}.\end{align} By inspection of the series (\ref{firstlimit}) and (\ref{secondlimit}), this specialization (\ref{spec}) is valid; that is, under this specialization, both (\ref{firstlimit}) and (\ref{secondlimit}) are well-defined as elements of $\Q[\Gamma][[u,v,m_1,m_2,m_3]].$ By (\ref{btop}), when $\lambda$ is nonempty, the infinite product $$\{t^{-\rho}q^{-\lambda},q^{-\rho}\}_{m}$$ contains $$(1+m\sqrt{\frac{t}{q}})$$ as a factor. So, for nonempty $\lambda$ we have \begin{align}\label{noedge}\{t^{-\rho}q^{-\lambda},q^{-\rho}\}_{-\sqrt{\frac{q}{t}}}=0.\end{align}

After implementing (\ref{spec}), we see from (\ref{noedge}) that only pairs $(\mu_1,\mu_2)$ where $\mu_1$ is the empty partition contribute to the sum (\ref{firstlimit}). 

Now, substitute \begin{align}\label{subs} m_1=-m_1'\sqrt{\frac{t}{q}},\ m_2=-m_2'\sqrt{\frac{t}{q}},\ m_3=-m_3'\sqrt{\frac{t}{q}},\ u=u'\frac{q}{t},\ v=v'\frac{q}{t}.\end{align}

Having implemented (\ref{spec}) and (\ref{subs}), we apply (\ref{btop}) to the products $V_T$ and $V_D$ appearing in each summand of (\ref{firstlimit}). After cancelling like terms and keeping careful track of the remaining factors, we obtain that under the substitutions (\ref{spec}, \ref{subs}), the sum (\ref{firstlimit}) becomes
\begin{align}& \bigg( \sum_{\mu_2}  (v')^{|\mu_2|}q^{|\mu_2|}t^{||\mu_2||^2-|\mu_2|}\prod_{\square\in\mu_2}\frac{(1-m_2'q^{b_1}t^{-b_2})(1-m_3'q^{b_1}t^{-b_2})(1-u'm_1'q^{-b_1}t^{b_2})}{(1-q^lt^{a+1})(1-q^{l+1}t^a)(1-u'q^{-b_1}t^{b_2})}\bigg) \nonumber \\ & \cdot \prod_{i,j} \frac{(1-u'm_1'q^{i+1}t^{j}) (1-u'm_2'q^{i+1}t^{j}) }{(1-u'q^{i+1}t^{j})(1-u'm_1'm_2'q^{i+1}t^{j})} \nonumber \\& \cdot \prod_{i,j} (1-q^{i+1}t^{j})(1-m_1'q^{i}t^{j+1})(1-m_2'q^{i}t^{j+1})(1-m_3'q^{i}t^{j+1}). \label{firstlimitsub1}
\end{align}
By a similar computation, under the substitutions (\ref{subs}), the series (\ref{secondlimit}) becomes
\begin{align}
&\bigg(\sum_{\lambda_1}  (u')^{|\lambda_1|}q^{|\lambda_1|}t^{||\lambda_1||^2-|\lambda_1|} \prod_{\square\in\lambda_1} \frac{(1-m_1'q^{b_1}t^{-b_2})(1-m_2'q^{b_1}t^{-b_2})  (1-v'm_3'q^{-b_1}t^{b_2}) }{(1-q^lt^{a+1})(1-q^{l+1}t^a) (1-v'q^{-b_1}t^{b_2}) } \bigg) \nonumber \\
&\cdot \prod_{i,j} \frac{ (1-v'm_2'q^{i+1}t^{j})(1-v'm_3'q^{i+1}t^{j})}{(1-v'q^{i+1}t^{j})(1-v'm_2'm_3'q^{i+1}t^{j})} \nonumber \\ & \cdot \prod_{i,j} (1-q^{i+1}t^{j})(1-m_1'q^{i}t^{j+1})(1-m_2'q^{i}t^{j+1})(1-m_3'q^{i}t^{j+1}). \label{secondlimitsub1}
\end{align}

Note that  \begin{align}\prod_{\square\in \lambda}\frac{1}{(1-q^{l+1}t^a)(1-u'q^{-b_1}t^{b_2})}&=\prod_{\square\in\lambda}\frac{q^{-l-1}t^{-a}q^{b_1}t^{-b_2}}{(1-q^{-l-1}t^{-a})(u'-q^{b_1}t^{-b_2})}\nonumber \\ &=\frac{q^{-|\lambda|}t^{-||\lambda||^2+|\lambda|}}{\prod_{\square\in\lambda}(1-q^{-l-1}t^{-a})(u'-q^{b_1}t^{-b_2})}.\label{swap}\end{align}

After canceling the factors common to (\ref{firstlimitsub1}) and (\ref{secondlimitsub1}), redistributing terms, and using (\ref{swap}), we conclude that the expression

\begin{align}
\label{int1}& \bigg(\sum_{\mu_2} (v')^{|\mu_2|} \prod_{\square\in\mu_2}\frac{(1-m_2'q^{b_1}t^{-b_2}) (1-m'_3q^{b_1}t^{-b_2})(1-u'm_1'q^{-b_1}t^{b_2})}{(1-q^lt^{a+1})(1-q^{-l-1}t^{-a})(u'-q^{b_1}t^{-b_2})}\bigg) \nonumber \\
 &\cdot \prod_{i,j} \frac{(1-v'q^{i+1}t^{j})(1-v'm_2'm'_3q^{i+1}t^{j})}{(1-v'm_2'q^{i+1}t^{j})(1-v'm'_3q^{i+1}t^{j})} 
\end{align}
is equal to the expression 
\begin{align}
\label{int2} & \bigg(\sum_{\lambda_1} (u')^{|\lambda_1|}
\prod_{\square\in\lambda_1}\frac{(1-m_1'q^{b_1}t^{-b_2}) (1-m_2'q^{b_1}t^{-b_2})(1-v'm'_3q^{-b_1}t^{b_2})}{(1-q^lt^{a+1})(1-q^{-l-1}t^{-a})(v'-q^{b_1}t^{-b_2})}\bigg) \nonumber \\ 
&\cdot \prod_{i,j} \frac{(1-u'q^{i+1}t^{j})(1-u'm_1'm_2'q^{i+1}t^{j})}{(1-u'm_1'q^{i+1}t^{j})(1-u'm_2'q^{i+1}t^{j})}. 
\end{align}

By (\ref{compsym}),

$$\prod_{i,j} \frac{(1-v'q^{i+1}t^{j})(1-v'm_2'm'_3q^{i+1}t^{j})}{(1-v'm_2'q^{i+1}t^{j})(1-v'm'_3q^{i+1}t^{j})} =\Sym^{\bullet} \bigg( -v'\frac{q(1-m_2')(1-m_3')}{(1-q)(1-t)}\bigg ).$$


So, we regard (\ref{int1}) and  (\ref{int2}) as power series in $u', v', m'_1, m'_2, m'_3$ whose coefficients are rational functions in $q$ and $t$. Writing (\ref{int1}) in terms of $q^{-1}$ and $t$, we obtain $$\frac{F(v',m_2',m_3',m_1',u')(q,t^{-1})}{\Sym^{\bullet}\bigg (-v'\frac{(1-m_2')(1-m_3')}{(1-q^{-1})(1-t)}\bigg)};$$ similarly (\ref{int2}) may be rewritten as $$\frac{F(u',m_1',m_2',m_3',v')(q,t^{-1})}{\Sym^{\bullet}\bigg (-u'\frac{(1-m_1')(1-m_2')}{(1-q^{-1})(1-t)}\bigg)}.$$

For now, assume Proposition \ref{denominator}, which asserts that $$\Sym^{\bullet} \bigg( -v'\frac{(1-m_2')(1-m_3')}{(1-q^{-1})(1-t)}\bigg )=F(v',m_2',m_3',m_1',0)(q,t^{-1}).$$ We conclude that
 $$\frac{F(v',m'_2,m'_3,m'_1,u)(q,t^{-1})}{F(v',m'_2,m'_3,m'_1,0)(q,t^{-1})}=\frac{F(u',m'_1,m'_2,m'_3,v')(q,t^{-1})}{F(u',m'_1,m'_2,m'_3,0)(q,t^{-1})}.$$

By definition, the expression $F(v,m_2,m_3,m_1,u)(q,t^{-1})$ is symmetric under exchanging $m_2$ and $m_3$. We conclude that

\begin{align*} \frac{F(v,m_2,m_3,m_1,u')(q,t^{-1})}{F(v,m_2,m_3,m_1,0)(q,t^{-1})}&=\frac{F(u,m_1,m_2,m_3,v)(q,t^{-1})}{F(u,m_1,m_2,m_3,0)(q,t^{-1})}\\&=\frac{F(u,m_2,m_1,m_3,v)(q,t^{-1})}{F(u,m_2,m_1,m_3,0)(q,t^{-1})}\\&=\frac{F(v,m_1,m_3,m_2,u)(q,t^{-1})}{F(v,m_1,m_3,m_2,0)(q,t^{-1})}\\&=\frac{F(v,m_3,m_1,m_2,u)(q,t^{-1})}{F(v,m_3,m_1,m_2,0)(q,t^{-1})}\\&=\frac{F(u,m_2,m_3,m_1,v)(q,t^{-1})}{F(u,m_2,m_3,m_1,0)(q,t^{-1})},
\end{align*}
so that Theorem \ref{symmetry} follows from Proposition \ref{denominator}.

Proposition \ref{denominator}, then, is the result of the same procedure applied to the Calabi-Yau threefold $X_2$ with K\"{a}hler parameters as labeled in Figure \ref{kahs}. Let $\lambda$ the the two-dimensional partition assigned to the bounded vertical edge of the diagram. Under the substitutions $$m_1=-m_1'\sqrt{\frac{t}{q}},\ \  m_2=-m_2'\sqrt{\frac{t}{q}},\ \ u=u'\frac{q}{t}$$ the limit $Z'_{DT}(X_2)^{\sigma(r_1,r_2,r_3)}$ as $r_3\gg r_2>0\gg r_1$ becomes:

\begin{align}\label{fact1} \prod_{i,j}\frac{ (1-m_1'q^{i}t^{j+1})(1-m_2'q^{i}t^{j+1})(1-m_1'u'q^{i+1}t^{j})(1-m_2'u'q^{i+1}t^{j}) }{(1-u'q^{i+1}t^{j})(1-u'm_1'm_2'q^{i+1}t^{j})},\end{align}
while the limit $Z'_{DT}(X_2)^{\sigma(s_1,s_2,s_3)}$ as $s_2\gg 0>s_1\gg s_3$ becomes

\begin{align}\label{denom4}
& \bigg( \sum_{\lambda}(u')^{|\lambda|} q^{|\lambda|} t^{||\lambda||^2-|\lambda|}\prod_{\square\in\lambda}\frac{(1-m_1'q^{b_1}t^{-b_2})(1-m_2'q^{b_1}t^{-b_2})}{(1-q^lt^{a+1})(1-q^{l+1}t^a)} \bigg)\nonumber \\&\cdot \prod_{i,j}(1-m_1'q^{i}t^{j+1})(1-m_2'q^{i}t^{j+1}).
\end{align}

By Corollary \ref{eqlim}, the series (\ref{fact1}) and (\ref{denom4}) are equal. We obtain the equality $$ \sum_{\lambda}(u')^{|\lambda|}\prod_{\square\in\lambda}\frac{(1-m_1'q^{b_1}t^{-b_2})(1-m_2'q^{b_1}t^{-b_2})}{(1-q^lt^{a+1})(1-q^{-l-1}t^{-a})(-q^{b_1}t^{-b_2})}=\prod_{i,j}\frac{(1-u'm_1'q^{i+1}t^{j})(1-u'm_2'q^{i+1}t^{j}) }{(1-u'q^{i+1}t^{j})(1-u'm_1'm_2'q^{i+1}t^{j})};$$ here, we have also applied the $u'=0$ specialization of the identity (\ref{swap}) to the series (\ref{denom4}). Both sides of this identity are power series in $u',m'_1,m_2'$ whose coefficients are rational functions in $q$ and $t$.

By (\ref{compsym}), we have $$\prod_{i,j}\frac{(1-u'm_1'q^{i+1}t^{j})(1-u'm_2'q^{i+1}t^{j}) }{(1-u'q^{i+1}t^{j})(1-u'm_1'm_2'q^{i+1}t^{j})}=\Sym^{\bullet} \bigg( u' \frac{q(1-m_1')(1-m_2')}{(1-q)(1-t)}\bigg).$$ It follows that $$F(u',m'_1,m'_2,m'_3,0)(q,t^{-1})=\Sym^{\bullet}\bigg(-u'\frac{(1-m_1')(1-m_2')}{(1-q^{-1})(1-t)}\bigg),$$ proving Proposition \ref{denominator}.

\subsection{Choosing the toric geometries}\label{Xorigin}
We conclude this section with some brief remarks on how the toric Calabi-Yau threefolds $X_1$ and $X_2$ were selected to produce Theorem \ref{symmetry} and Proposition \ref{denominator}. 

We recall a higher-rank analog of the Hilbert scheme of points on $\C^2$. Let $M(r,n)$ denote the instanton moduli space of torsion free sheaves on $\P^2$ of rank $r$ and second Chern class $n$ framed along the line at infinity $l_{\infty}$, that is $$M(r,n)=\Big\{ (\mathcal{F}, \phi) \Big | \begin{array}{l} \mathcal{F}\ \text{a torsion free sheaf on}\ \P^2\ \text{with}\ \mathrm{rk}(\mathcal{F})=r, c_2(\mathcal{F})=n \\ \phi: \mathcal{F}|_{l_{\infty}}\xrightarrow{\sim} \mathcal{O}^{\oplus r}_{l_{\infty}} \end{array} \Big\}/\sim.$$ Each space $M(r,n)$ is smooth, and in particular, $$M(1,n)\cong (\C^2)^{[n]}.$$ As is the case for Hilbert schemes, each $M(r,n)$ carries a rank $n$ tautological bundle $\mathcal{O}^{[n]}$ whose fiber over $(\mathcal{F},\phi)$ is given by $H^1(\F(-l_{\infty})).$ See \cite[Sec. 3.3]{Ot} for a good introduction to these moduli spaces and their connection with enumerative geometry.
 
Given a toric Calabi-Yau threefold $X$ and a slope $\sigma$ that is preferred in every toric chart, the computations of Section \ref{secbuildingblocks} explain how to write $Z_{DT}(X)^{\sigma}$ using the refined topological vertex. Then, there is an extensive literature (including \cite{AKMV, AK, HIV, IKP, IKV, LLZ, Ta}) containing matches, for certain examples of toric Calabi-Yau geometries $X$, of the topological string partition functions of $X$ with series of equivariant integrals of tautological classes over the spaces $M(r,n)$. See \cite{IKP} for calculations for a broad class of example geometries $X$, those arising from triangulations of ``strips.'' Given such a geometry, this literature roughly indicates the tautological classes whose Euler characteristics can be matched with $Z_{DT}(X)^{\sigma}$. 

In particular, as a consequence of \cite[Sec 5.2.2]{HIV}, for example, one expects that preferred limits of $Z_{DT}(X_1)$ can be matched, up to a prefactor and change of variables, with a particular series of Euler characteristics over the spaces $M(2,n)$ from which the series (\ref{Fdef}) can be extracted. 

Let us be more precise. The moduli spaces $M(2,n)$ carry the action of a torus $$\begin{pmatrix} t_1 & \\ & t_2\end{pmatrix}\times \begin{pmatrix} a_1 & \\ & a_2\end{pmatrix};$$ the action of first factor is induced by its action on $\P^2$ and the second factor acts on the framings $\phi$. The torus fixed points of $M(2,n)$ are classified by pairs $(\lambda_1,\lambda_2)$ of two-dimensional partitions with $|\lambda_1|+|\lambda_2|=n$. The series in question is 

$$\sum_{z\geq 0} z^n\chi(M(2,n), \Lambda^{\bullet}_{m_1} \mathcal{O}^{[n]}\otimes \Lambda^{\bullet}_{m_2} \mathcal{O}^{[n]}\otimes \Lambda^{\bullet}_{m_3} (\mathcal{O}^{[n]})^{\vee}\otimes \Lambda^{\bullet}_{m_4} (\mathcal{O}^{[n]})^{\vee})$$
which, using equivariant localization, can be rewritten as

\begin{align}\label{M2int}
&\sum_{\lambda_1,\lambda_2} \frac{z^{|\lambda_1|+|\lambda_2|}}{\Lambda^{\bullet} (T^{\vee}_{\lambda_1,\lambda_2})}\prod_{i=1}^{2}\Big( \Small\prod_{\square\in \lambda_1}\begin{array}{l} (1-m_ia^{-1}_1t_1^{-b_1}t_2^{-b_2}) \\ \cdot(1-m_{i+2}a^{-1}_1t_1^{b_1}t_2^{b_2})\end{array} \prod_{\square'\in \lambda_2}\begin{array}{l} (1-m_ia^{-1}_2t_1^{-b'_1}t_2^{-b'_2})\\ \cdot (1-m_{i+2}a_2^{-1}t_1^{b'_1}t_2^{b'_2})\end{array}\Big).
\end{align}

Here $$T_{\lambda_1,\lambda_2}\in \Z[t_1^{\pm}, t_2^{\pm}, (\frac{a_1}{a_2})^{\pm}]$$ is the character of the tangent space to the fixed point $(\lambda_1,\lambda_2)\in M(r,n)$ and $\square\in \lambda$ has coordinates $(b_1,b_2)$ while $\square' \in \lambda'$ has coordinates $(b'_1,b'_2)$. For our purposes, it will suffice to know that \begin{align}\label{lamemp}T_{\lambda,\emptyset}=T_{\lambda}+\sum_{\square\in\lambda}\Big( \frac{a_2}{a_1}t_1^{-b_1}t_2^{-b_2}+\frac{a_1}{a_2}(t_1t_2) t_1^{b_1}t_2^{b_2}\Big)\end{align} where $T_{\lambda}$ is the tangent space to the Hilbert scheme as defined in (\ref{tanchar}).  
We refer the interested reader to \cite[Sec. 3.3.7-8]{Ot} for an combinatorial formula for $T_{\lambda_1,\lambda_2}$ using arm and leg lengths.

Indeed, up to an important prefactor and change of variables, the limits (\ref{firstlimit}) and (\ref{secondlimit}) can be matched with (\ref{M2int}). The choice of $X_1$ and substitution (\ref{spec}) are motivated by the observation that under the substitution $m_2=a_2$ in (\ref{M2int}), the following two cancellations happen. First, the terms (\ref{M2int}) with nonempty $\lambda_2$ vanish so that the series becomes \begin{align}\label{M2mid}\sum_{\lambda_1} \frac{z^{|\lambda_1|}}{\Lambda^{\bullet}(T_{\lambda,\emptyset}^{\vee})}\prod_{\square\in\lambda_1}\Big((1-\frac{m_1}{a_1}t_1^{-b_1}t_2^{-b_2})(1-\frac{a_2}{a_1}t_1^{-b_1}t_2^{-b_2})(1-\frac{m_3}{a_1}t_1^{b_1}t_2^{b_2})(1-\frac{m_4}{a_1}t_1^{b_1}t_2^{b_2})\Big).\end{align} Second, by (\ref{lamemp}), we have $$\Lambda^{\bullet}(T_{\lambda,\emptyset}^{\vee})=\Lambda^{\bullet}(T_{\lambda}^{\vee})\prod_{\square\in\lambda_1}(1-\frac{a_1}{a_2}t_1^{b_1}t_2^{b_2})(1-\frac{a_2}{a_1}\frac{1}{t_1t_2}t_1^{-b_1}t_2^{-b_2}),$$ so there is cancellation between the numerator and denominator of (\ref{M2mid}). The series becomes $$\sum_{\lambda_1} \frac{z^{|\lambda_1|}}{\Lambda^{\bullet}(T^{\vee}_{\lambda_1})}\prod_{\square\in\lambda_1}\Big( \frac{(1-\frac{m_1}{a_1}t_1^{-b_1}t_2^{-b_2})(-\frac{a_2}{a_1}t_1^{-b_1}t_2^{-b_2})(1-\frac{m_3}{a_1}t_1^{b_1}t_2^{b_2})(1-\frac{m_4}{a_1}t_1^{b_1}t_2^{b_2})}{(1-\frac{a_2}{a_1}\frac{1}{t_1t_2}t_1^{-b_1}t_2^{-b_2})}\Big),$$ which,  up to a change of variables, equals (\ref{Fdef}).

Similarly as a consequence of, for example, \cite[Sec. 5.2.1]{HIV}, one expects that preferred limits of $Z_{DT}(X_2)^{\sigma}$ can be matched with \begin{align*}\label{Hilbint} \sum_{n}\chi((\C^2)^{[n]},\Lambda^{\bullet}_{m_1}\mathcal{O}^{[n]}\otimes \Lambda^{\bullet}_{m_2}(\mathcal{O}^{[n]})^{\vee}).\end{align*} The choice of geometry $X_2$ can also be motivated by the observation that (\ref{denom4}) is obtained from (\ref{secondlimit}) by extracting the summands corresponding to pairs $(\lambda_1,\emptyset)$ and setting $m_3=m_4=v=0$.

We conclude by identifying obstacles to determining precisely which tautological classes can be profitably studied using the $DT$-invariants of some threefold $X$. 

First, in order to produce an identity from a geometry $X$ using Theorem \ref{independence}, there must be two choices of sets of parallel edges that cover the 1-skeleton of $\Delta(X)$ (corresponding to two different choices of preferred slope $\sigma$.) The author is not aware of a classification of the tautological classes whose Euler characteristics arise as $Z_{DT}(X)^{\sigma}$ for such an $X$. 

Second, as the calculations of Section \ref{eval} indicate, the precise matching of enumerative invariants with tautological classes is intricate. To obtain precise statements, it is important to keep track of prefactors and changes of variables. Given some $X$, it seems difficult to predict the prefactors and changes of variables that will appear when matching a limit $Z_{DT}(X)^{\sigma}$ with tautological classes. In particular, the exact normalizations and changes of variables used in Definition \ref{keul} and Theorem \ref{symmetry} were found by starting from the geometry $X_1$ and computing the limits of Theorem \ref{independence}.

\section{Tautological bundles on $S^{[n]}$}\label{tautological}
Let $S$ be a complex projective smooth surface and $\mathcal{L}$ be a line bundle on $S$. In this section, we use Theorem \ref{symmetry} and Proposition \ref{denominator}  to study the holomorphic Euler characteristics of certain Schur functors of the tautological bundles $\mathcal{L}^{[n]}$ on $S^{[n]}$.

The approach has three steps. First, we use the cobordism techniques of Ellingsrud-G\"{o}ttsche-Lehn to reduce to the case where $S$ is a toric surface. We then use equivariant localization to write such Euler characteristics for toric $S$ in terms of their equivariant analogs for $\C^2$; similar strategies are used in \cite{CO,GK1,GK2}, for example. Finally, we analyze this special case using the results of the previous section.

\subsection{Reduction to toric surfaces}\label{toricred}

Let $m, y,$ and $z$ be formal parameters, and set  
 $$\chi_{\Lambda}(S,\mathcal{L})=\sum_{k,n \geq 0} z^n(-m)^k\chi(S^{[n]},\Lambda^k \mathcal{L}^{[n]}),\ \ \chi_{\Sym}(S,\mathcal{L})=\sum_{k,n\geq 0} z^ny^k\chi(S^{[n]},\Sym^k \mathcal{L}^{[n]}).$$ For each $n$, the Hirzebruch-Riemann-Roch Theorem implies that $\chi_{\Lambda}$ and $\chi_{\Sym}$ can be written in the form (\ref{bigseries}). To be precise, if $\{l^{(n)}_{i}\}$ are the Chern roots of $\mathcal{L}^{[n]}$ and $\{w^{(n)}_{j}\}$ are the Chern roots of $\mathcal{T} S^{[n]},$ we have $$\chi_{\Lambda}(S,\mathcal{L})= \sum_{n=0}^{\infty} z^n\int_{S^{[n]}} \prod_{i=1}^n (1-me^{l^{(n)}_i}) \prod_{j=1}^{2n} \frac{w^{(n)}_j}{1-e^{-w^{(n)}_j}}$$ and similarly  $$\chi_{\Sym}(S,\mathcal{L})= \sum_{n=0}^{\infty} z^n\int_{S^{[n]}} \prod_{i=1}^{n} \frac{1}{1-ye^{l^{(n)}_i}} \prod_{j=1}^{2n} \frac{w^{(n)}_j}{1-e^{-w^{(n)}_j}}.$$

By \cite[Thm 4.2]{EGL}, there exist universal series $A_i\in \Q[m,z],$ for $i=1,\ldots, 4$ such that $$\chi_{\Lambda}(S,\mathcal{L}) = \exp(c_1(\mathcal{L})^2 A_1+ c_1(\mathcal{L})c_1(S)A_2+ c_1(S)^2A_3+c_2(S)A_4).$$ 

Set $$\gamma(S,\mathcal{L})=(c_1(\mathcal{L})^2, c_1(\mathcal{L})c_1(S), c_1(S)^2, c_2(S))\in \Z^4.$$ If $$\gamma(S,L)=\sum_{i} a_i \gamma(S_i,\mathcal{L}_i)$$ for $a_i\in \Z$, then 
\begin{equation} 
\label{redtoric}
\chi_{\Lambda}(S,\mathcal{L})=\prod_{i} (\chi_{\Lambda}(S_i,\mathcal{L}_i))^{a_i}. 
\end{equation} 
 Analagous results hold for $\chi_{\Sym}.$

We conclude that $\chi_{\Lambda}$ and $\chi_{\Sym}$ can be determined for arbitrary $(S,\mathcal{L})$ by their values on any quadruple $\{(S_i,\mathcal{L}_i)\}_{i=1,\ldots,4}$ for which the vectors $\{\gamma(S_i,\mathcal{L}_i)\}$ are $\Q$-linearly independent. Any set of generators $\{(S_i,\mathcal{L}_i)\}$ of the algebraic cobordism ring (see \cite{LP}) of surfaces equipped with a line bundle will satisfy this condition. In particular, such $(S_i,\mathcal{L}_i)$ can be chosen to be toric. For example, for the 4 generators \begin{align}\label{cobordismgenerators} (\P^2, \mathcal{O}), (\P^2,\mathcal{O}(1)), (\P^1\times \P^1, \mathcal{O}), (\P^1\times \P^1, \mathcal{O}(1))\end{align} considered in \cite{EGL,Tz}, we have \begin{center}\begin{tabular}{ll} $\gamma(\P^2,\mathcal{O})=(0,0,9,3)$ & $\gamma(\P^1\times \P^1, \mathcal{O})=(0,0,8,4)$ \\ $\gamma(\P^2, \mathcal{O}(1))=(1,3,9,3)$ & $\gamma(\P^1\times \P^1,\mathcal{O}(1,0))=(0,2,8,4).$\end{tabular} \end{center} 



\subsection{Reduction to equivariant $\C^2$}\label{eqred}
For the remainder of this section, let  $T$ denote the two-dimensional torus $\mathrm{diag}(t_1,t_2).$ Let $S$ be a toric surface with $T$-action such that $S^T$ consists of finitely many points and let $\mathcal{L}$ be an equivariant line bundle on $S$. Then, the series $\chi_{\Lambda}(S,\mathcal{L})$ and $\chi_{\Sym}(S,\mathcal{L})$ can be read off from the equivariant geometry near the fixed locus $S^T$. 

Namely, let $\{s_i\}$ be the points of $S^T$, and let $U_i$ be a toric chart centered at $s_i$. Let $w_{i_1}$ and $w_{i_2}$ denote the tangent weights at $s_i$, and let $l_i$ denote the $T$-weight of the fiber $\mathcal{L}_{s_i}.$

The $T$-action on $S$ induces an action on $S^{[n]}$, and in particular, an action of $T$ on $U_{i}^{[n]}$ for each $U_i$. The $T$-fixed points of $S^{[n]}$ are given by products $\prod_i \I_{\lambda^i}$ of $T$-fixed ideal sheaves $\I_{\lambda^i}\subset \mathcal{O}_{U_i}$ with $\sum |\lambda^i|=n$. The $T$-character of the Zariski tangent space to $\prod_i \I_{\lambda^i}$ is $$\sum_{i} T_{\lambda^{i}}(w_{i_1},w_{i_2})$$ while the fiber of $\mathcal{L}^{[n]}$ over ${\prod_i \I_{\lambda^i}}$ has torus character $$\sum_{i} \sum_{\square\in\lambda^i} l_i w_{i_1}^{-b_1}w_{i_2}^{-b_2}.$$ 

Now, suppose $T$ acts on $\C^2$ by scaling the coordinate axes by weights $w_{1}, w_{2}$, and let $\mathcal{O}(l)$ denote the trivial bundle over $\C^2$ twisted by a torus weight $l$. Set $\chi_{\Lambda}(\C^2,\mathcal{O}(l))(w_1,w_2)$ to be the character of $$\sum_{n,k\geq 0} z^n(-m)^k \chi({\C^2}^{[n]},\Lambda^{k}\mathcal{O}(l)^{[n]})$$ and define $\chi_{\Sym}(\C^2,\mathcal{O}(l))(w_1,w_2)$ analogously. Equivariant localization implies $$\chi_{\Lambda}(\C^2,\mathcal{O}(l))(w_1,w_2) = \sum_{\lambda}\frac{z^{|\lambda|}}{\Lambda^{\bullet}\Big (T^{\vee}_{\lambda}(w_1,w_2)\Big )}\prod_{\square\in \lambda} (1-mlw_1^{-b_1}w_2^{-b_2}).$$ 

Applying equivariant localization to $S^{[n]}$, we obtain \begin{align*} \chi_{\Lambda}(S,\mathcal{L}) &= \sum_{\{\lambda^i\}} \prod_{i}  \frac{z^{|\lambda^i|}}{\Lambda^{\bullet}\Big (T^{\vee}_{\lambda^i}(w_{i_1},w_{i_2})\Big )}\prod_{\square\in\lambda^i} (1-ml_iw_{i_1}^{-b_1}w_{i_2}^{-b_2}) \\&=\prod_{i} \sum_{\lambda^i} \frac{z^{|\lambda^i|}}{\Lambda^{\bullet}\Big (T^{\vee}_{\lambda^i}(w_{i_1},w_{i_2})\Big )}\prod_{\square\in\lambda^i} (1-ml_iw_{i_1}^{-b_1}w_{i_2}^{-b_2}) \\& = \prod_{i} \chi_{\Lambda}(\C^2,\mathcal{O}(l_i))(w_{i_1},w_{i_2}).\end{align*}

The resulting identity is an equality of equivariant Euler characteristics; to recover the nonequivariant series, one specializes $t_1=t_2=1$.

Again, analogous results hold for $\chi_{\Sym}$.  

\subsection{Exterior powers}
Using Proposition \ref{denominator}, we compute $\chi_{\Lambda}(S,\mathcal{L})$ for an arbitrary surface. 
\begin{corollary}
For a projective surface $S$ with line bundle $\mathcal{L}$, we have $$\chi(S^{[n]},\Lambda^k \mathcal{L}^{[n]})=\binom{n-k+\chi(\mathcal{O}_S)-1}{n-k}\binom{\chi(\mathcal{L})}{k}.$$
\end{corollary}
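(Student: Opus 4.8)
The plan is to combine the cobordism-theoretic reduction of Section \ref{toricred}, the localization reduction of Section \ref{eqred}, and the closed-form evaluation of $F(z,m_1,m_2,m_3,0)$ in Proposition \ref{denominator}. First I would record the structural shape of the answer: by \cite[Thm 4.2]{EGL} applied to $\chi_{\Lambda}(S,\mathcal{L})$, there exist universal power series $A_1,\dots,A_4\in\Q[m][[z]]$ with
\[
\chi_{\Lambda}(S,\mathcal{L})=\exp\!\big(c_1(\mathcal{L})^2 A_1+c_1(\mathcal{L})c_1(S)A_2+c_1(S)^2 A_3+c_2(S)A_4\big).
\]
So it suffices to determine the four series $A_i$, or equivalently to verify the claimed formula on any four surface-line-bundle pairs whose invariants $\gamma(S_i,\mathcal{L}_i)=(c_1(\mathcal{L}_i)^2,c_1(\mathcal{L}_i)c_1(S_i),c_1(S_i)^2,c_2(S_i))$ span $\Q^4$; the toric generators in (\ref{cobordismgenerators}) will do. Since the proposed answer $\prod_k$ over $\binom{n-k+\chi(\mathcal{O}_S)-1}{n-k}\binom{\chi(\mathcal{L})}{k}$ manifestly depends on $(S,\mathcal{L})$ only through $\chi(\mathcal{O}_S)$ and $\chi(\mathcal{L})$, which by Riemann--Roch are $\Q$-linear in $\gamma(S,\mathcal{L})$, and since the generating function $\sum_{k,n}z^n(-m)^k\binom{n-k+\chi(\mathcal{O}_S)-1}{n-k}\binom{\chi(\mathcal{L})}{k}$ factors as an exponential of such a linear combination, the candidate is of the form (\ref{bigseries}); thus checking it on the four generators suffices.

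Next I would reduce the toric computation to $\C^2$. By the argument of Section \ref{eqred}, for a toric surface $S$ with isolated fixed points $s_i$, tangent weights $w_{i_1},w_{i_2}$, and fiber weights $l_i$ of $\mathcal{L}$,
\[
\chi_{\Lambda}(S,\mathcal{L})=\prod_i \chi_{\Lambda}(\C^2,\mathcal{O}(l_i))(w_{i_1},w_{i_2}),
\]
and one recovers the non-equivariant series by setting $t_1=t_2=1$ at the end. The key local input is the comparison of $\chi_{\Lambda}(\C^2,\mathcal{O}(l))(w_1,w_2)=\sum_\lambda \frac{z^{|\lambda|}}{\Lambda^\bullet(T_\lambda^\vee(w_1,w_2))}\prod_{\square\in\lambda}(1-mlw_1^{-b_1}w_2^{-b_2})$ with the series $F$ of (\ref{Fdef}). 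Setting $m_1=m$, $l=$ (trivial), and $m_2=m_3=0$, $y=0$ in $F$'s localization formula kills the symmetric and dual-exterior factors and the $\Lambda^n\mathcal{O}^{[n]}$ twist, leaving essentially $F(z,m,0,0,0)(t_1,t_2)$, which by Proposition \ref{denominator} equals $\exp\!\big(\sum_{n\geq1}-\frac{z^n}{n}\frac{1-m^n}{(1-t_1^{-n})(1-t_2^{-n})}\big)$. I would then carry out the elementary substitution matching $w_1,w_2$ (and the twist weight $l$) with $t_1,t_2$, and take the $t_1,t_2\to1$ limit carefully using that $\frac{1}{(1-t_1^{-n})(1-t_2^{-n})}$ has a double pole whose Laurent expansion at $t_1=t_2=1$ contributes the $c_1(S)^2$, $c_2(S)$ and $c_1(\mathcal{L})$-type terms; this is exactly the mechanism by which the exponential formula's four coefficients $A_i$ get pinned down.

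Concretely, I would run this on the four generators: for $(\P^2,\mathcal{O})$ and $(\P^1\times\P^1,\mathcal{O})$ one gets $\mathcal{L}$ trivial so $l_i=1$ at each fixed point and only the $\chi(\mathcal{O}_S)$-dependence survives, recovering $\sum_n z^n\binom{n+\chi(\mathcal{O}_S)-1}{n}$ (the $k=0$ term, consistent with $\Lambda^0$); for $(\P^2,\mathcal{O}(1))$ and $(\P^1\times\P^1,\mathcal{O}(1,0))$ the nontrivial fiber weights $l_i$ feed in the $m$-dependence and produce the $\binom{\chi(\mathcal{L})}{k}$ factor. Matching both sides on these four cases, using that both are exponentials of $\Q$-linear functionals of $\gamma$, forces equality of the $A_i$ and hence proves the corollary for all $(S,\mathcal{L})$. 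Finally I would extract the coefficient of $z^n m^k$, using the Cauchy-product identity
\[
\Big(\sum_{a\geq0}z^a\tbinom{a+\chi(\mathcal{O}_S)-1}{a}\Big)\Big(\sum_{b\geq0}(-mz)^b\tbinom{\chi(\mathcal{L})}{b}(-1)^b\Big)\ \text{(suitably arranged)}\ =\ \sum_{n,k}z^n(-m)^k\tbinom{n-k+\chi(\mathcal{O}_S)-1}{n-k}\tbinom{\chi(\mathcal{L})}{k},
\]
which gives the stated closed form.

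\emph{Main obstacle.} The genuinely delicate step is the passage from the equivariant identity $\chi_\Lambda(S,\mathcal{L})=\prod_i\chi_\Lambda(\C^2,\mathcal{O}(l_i))(w_{i_1},w_{i_2})$ together with Proposition \ref{denominator} to the non-equivariant closed form: one must expand the product of plethystic exponentials, each carrying a denominator $(1-t_1^{-n})(1-t_2^{-n})$ with a pole at $t_1=t_2=1$, and show the singular parts cancel across the fixed points (as they must, since the global Euler characteristic is a number) while the finite part reorganizes into an exponential in $c_1(\mathcal{L})^2,c_1(\mathcal{L})c_1(S),c_1(S)^2,c_2(S)$ with exactly the coefficients producing the binomial answer. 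Equivalently, identifying which combination of the local weight data $\{(w_{i_1},w_{i_2},l_i)\}$ reproduces each Chern-number functional, and confirming the bookkeeping matches $\chi(\mathcal{O}_S)$ and $\chi(\mathcal{L})$ via Riemann--Roch, is where the real work lies; the cobordism/generator argument is what lets us avoid doing this expansion in complete generality and instead verify it on the four explicit toric surfaces.
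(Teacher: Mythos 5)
Your skeleton coincides with the paper's: reduce to toric $(S,\mathcal{L})$ via \cite[Thm 4.2]{EGL}, localize to write $\chi_{\Lambda}(S,\mathcal{L})=\prod_i\chi_{\Lambda}(\C^2,\mathcal{O}(l_i))(w_{i_1},w_{i_2})$, identify the local factor with a specialization of $F$, and invoke Proposition \ref{denominator}. Two remarks. First, a small but real bookkeeping point: the correct identification is $\chi_{\Lambda}(\C^2,\mathcal{O}(l))(w_1,w_2)=F(zml,\tfrac{1}{ml},0,0,0)(w_1,w_2)$, not $F(z,m,0,0,0)$ --- the substitution $z\mapsto zml$, $m_1\mapsto 1/(ml)$ is what absorbs the $\Lambda^n\mathcal{O}^{[n]}$ twist and the $(y-t_1^{b_1}t_2^{b_2})^{-1}$ denominator at $y=0$, and converts the positive exponents in $F$ (which involves duals of tautological bundles) into the negative exponents appearing in $\chi_{\Lambda}$. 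You gesture at ``an elementary substitution'' but leave it unresolved; it is elementary, but it is exactly where the twist by $\Lambda^n\mathcal{O}^{[n]}$ gets used, so it should be pinned down.

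Second, and more importantly, the step you single out as the ``main obstacle'' --- Laurent-expanding each plethystic exponential around $t_1=t_2=1$, cancelling double poles across fixed points, and matching the finite part to Chern numbers --- is not needed and is not how the paper proceeds. The plethystic exponential is multiplicative, so the product over fixed points collapses to $\Sym^{\bullet}\bigl(\sum_i \frac{z-zml_i}{(1-w_{i_1}^{-1})(1-w_{i_2}^{-1})}\bigr)$, and the argument of $\Sym^{\bullet}$ is precisely the $T$-equivariant localization formula for $z\,\chi(S,\mathcal{O}_S)-zm\,\chi(S,\mathcal{L})$ on the \emph{compact} surface $S$. Hence it is a Laurent polynomial in $t_1,t_2$ with no pole at $t_1=t_2=1$, and its value there is the integer combination $z\chi(\mathcal{O}_S)-zm\chi(\mathcal{L})$. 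Then $\Sym^{\bullet}\bigl(z\chi(\mathcal{O}_S)-zm\chi(\mathcal{L})\bigr)=(1-zm)^{\chi(\mathcal{L})}/(1-z)^{\chi(\mathcal{O}_S)}$ by (\ref{smallsym}), and extracting the coefficient of $z^nm^k$ gives the stated binomials directly. This observation dissolves your obstacle entirely and also removes the need to verify anything case by case on the four cobordism generators (beyond using them to justify the reduction to toric surfaces). As written, your proposal is a correct plan whose hardest step is an artifact of not recognizing the local sum as a global Euler characteristic.
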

We remark that this result is a specialization of \cite{Sc2}[Thm 5.2.1], which specifies the dimension of each of the individual cohomology groups $H^i(S^{[n]},\Lambda^k \mathcal{L}^{[n]})$.

\begin{proof}
Let $F$ be as defined in (\ref{Fdef}). We have $$\chi_{\Lambda}(\C^2,\mathcal{O}(l))(w_1,w_2) = F(zml,\frac{1}{ml},0,0,0)(w_1,w_2).$$ Using Proposition \ref{denominator} and recalling (\ref{ratsym}), we compute  \begin{align*} F(zml,\frac{1}{ml},0,0,0)(w_1,w_2) &= \exp\Big(\sum_{n>0} -\frac{(zml)^n}{n} \frac{1-(\frac{1}{ml})^n}{(1-w_1^{-n})(1-w_2^{-n})}\Big)\\&=\exp\Big(\sum_{n>0}  \frac{z^n-(zml)^{n}}{n(1-w_1^{-n})(1-w_2^{-n})}\Big ) \\ &=\Sym^{\bullet}\Big(\frac{z-zml}{(1-w_1^{-1})(1-w_2^{-1})}\Big), \end{align*} where the last equality is (\ref{ratsym}). 

Section \ref{toricred} implies that it suffices to check the corollary when $S$ is toric and $\mathcal{L}$ is torus-equivariant. As in the previous section, let $\{s_i\}$ be the torus-fixed points of $S$; let $w_{i_1}, w_{i_2}$ be the tangent weights at $s_i$, and let $l_i$ be the weight of $\mathcal{L}_{s_i}$. Then, as equivariant series, we have \begin{align}\label{altsym} \chi_{\Lambda}(S,\mathcal{L})& =\prod_{i} \chi_{\Lambda}(\C^2,\mathcal{O}(l_i))(w_{i_1},w_{i_2})\nonumber \\&=\prod_{i} \Sym^{\bullet}\Big(\frac{z-zml_i}{(1-w_{i_1}^{-1})(1-w_{i_2}^{-1})}\Big)\nonumber \\&=\Sym^{\bullet}\Big (\sum_i \frac{z-zml_i}{(1-w_{i_1}^{-1})(1-w_{i_2}^{-1})}\Big ).\end{align} By equivariant localization, under the specialization $t_1=t_2=1$, the expression (\ref{altsym}) becomes \begin{align*} \Sym^{\bullet} \Big(z\chi(\mathcal{O}_S)-zm\chi(\mathcal{L})\Big )=\frac{(1-zm)^{\chi(\mathcal{L})}}{(1-z)^{\chi(\mathcal{O}_S)}}.\end{align*}
\end{proof}

\subsection{Symmetric powers}
 In contrast to exterior powers, the author is not aware of a reasonable closed form for Euler characteristics of arbitrary symmetric powers of a tautological bundle. Nonetheless, Theorem \ref{symmetry} and Proposition \ref{denominator} are well-suited to studying such symmetric powers. 

We use Theorem \ref{symmetry} to rewrite $\chi_{\Sym}(S,\mathcal{L})$ as a fraction where the Segre variable $y$ enters the denominator in a controllable manner. Note that the series $\chi_{\Sym}(\C^2,\mathcal{O}(l))(w_1,w_2)$ can be obtained by extracting all terms of the series $F(z,m,0,0,yl)(w_1,w_2)$ of equal $z$-degree and $m$-degree. By Theorem \ref{symmetry} we have \begin{align}\label{symseries} F(z,m,0,0,yl)(w_1,w_2)= \frac{F(z,m,0,0,0)(w_1,w_2)}{F(yl,0,0,m,0)(w_1,w_2)}\cdot F(yl,0,0,m,z)(w_1,w_2).\end{align} The first term on the right hand side can be described using the plethystic exponential; Proposition \ref{denominator} implies \begin{align}\label{sym1} &F(z,m,0,0,0)(w_1,w_2)= \Sym^{\bullet}\Big(\frac{zm-z}{(1-w_1^{-1})(1-w_2^{-1})}\Big)\\ &F(yl,0,0,m,0)(w_1,w_2)=\Sym^{\bullet}\Big(\frac{-yl}{(1-w_1^{-1})(1-w_2^{-1})}\Big ).\end{align} By definition, \begin{align}\label{rightpart} F(yl,0,0,m,z)(w_1,w_2)=\sum_{\lambda} \frac{(yl)^{|\lambda|}}{\Lambda^{\bullet}\Big(T^{\vee}_{\lambda}(w_1,w_2)\Big )} \prod_{\square\in\lambda} \frac{1-mzw_1^{-b_1}w_2^{-b_2}}{z-w_1^{b_1}w_2^{b_2}}.\end{align} Let $G(\C^2,\mathcal{O}(l))(w_1,w_2)$ denote the series obtained by first extracting the terms of (\ref{rightpart}) of equal $z$-degree and $m$-degree, and then setting $m=1$; we have \begin{align}\label{Gseries} G(\C^2,\mathcal{O}(l))(w_1,w_2)=\sum_{\lambda} \frac{(yl)^{|\lambda|}}{\Lambda^{\bullet}\Big (T^{\vee}_{\lambda}(w_1,w_2)\Big)} \prod_{\square\in\lambda} \frac{1-zw_1^{-b_1}w_2^{-b_2}}{-w_1^{b_1}w_2^{b_2}}.\end{align} Consequently, extracting the terms of both sides of (\ref{symseries}) of equal $z$-degree and $m$-degree, and then setting $m=1$ yields $$\chi_{\Sym}(\C^2,\mathcal{O}(l))(w_1,w_2)= \Sym^{\bullet}\Big(\frac{z+yl}{(1-w_1^{-1})(1-w_2^{-1})} \Big)G(\C^2,\mathcal{O}(l))(w_1,w_2).$$

Suppose $S$ is toric with equivariant line bundle $\mathcal{L}$ and let $\{s_i\}, \{w_{i_1},w_{i_2}\}$ and $\{\l_i\}$ be the torus-fixed points, tangent weights at fixed points, and weights of $\mathcal{L}$ at fixed points, as before. As $T$-equivariant Euler characteristics, we have \begin{align*} \chi_{\Sym}(S,\mathcal{L})& =\prod_{i} \chi_{\Sym}(\C^2,\mathcal{O}(l_i))(w_{i_1},w_{i_2}) \\ &= \prod_{i} \Sym^{\bullet}\Big (\frac{z+yl_i}{(1-w_{i_1}^{-1})(1-w_{i_2}^{-1})} \Big)G(\C^2,\mathcal{O}(l_i))(w_{i_1},w_{i_2}) \\ & = \Sym^{\bullet}\Big(\sum_{i} \frac{z+yl_i}{(1-w_{i_1}^{-1})(1-w_{i_2}^{-1})} \Big) \prod_{i} G(\C^2,\mathcal{O}(l_i))(w_{i_1},w_{i_2}). \end{align*}  

 Set \begin{align}\label{Gdef}G(S,\mathcal{L})=\Big(\prod_{i} G(\C^2,\mathcal{O}(l_i))(w_{i_1},w_{i_2})\Big)\bigg|_{t_1=t_2=1}.\end{align}

By equivariant localization, \begin{align*} &\Big(\sum_{i} \frac{z}{(1-w_{i_1}^{-1})(1-w_{i_2}^{-1})}\Big )\bigg|_{t_1=t_2=1}=z\chi(\mathcal{O}_S),\\ &\Big(\sum_{i} \frac{yl_i}{(1-w_{i_1}^{-1})(1-w_{i_2}^{-1})}\Big)\bigg|_{t_1=t_2=1}=y\chi(\mathcal{L});\end{align*} we conclude that \begin{align}\label{simplesym}\chi_{\Sym}(S,\mathcal{L}) =\frac{1}{(1-z)^{\chi(\mathcal{O}_S)}(1-y)^{\chi(\mathcal{L})}} G(S,\mathcal{L}).\end{align} 

We use (\ref{simplesym}) to compute Euler characteristics of symmetric powers of tautological line bundles. 

In the special case when $\chi(\mathcal{O}_S)=1$, we have the following stability result.

\begin{corollary}\label{chi1}
Let $S$ be a projective surface with $\chi(\mathcal{O}_S)=1$, and let $\mathcal{L}$ be a line bundle on $S$. Then, for $n\geq k$, we have $$\chi(S^{[n]},\Sym^k \mathcal{L}^{[n]})=\binom{\chi(\mathcal{L})+k-1}{k};$$ in particular, the right hand side does not depend on $n$.
\end{corollary}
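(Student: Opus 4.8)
The plan is to read the answer off from the product formula (\ref{simplesym}) after understanding how the factor $G(S,\mathcal L)$ depends on $z$. The two facts I need are: (i) the coefficient of $y^b$ in $G(S,\mathcal L)$ is a polynomial in $z$ of degree at most $b$, with $[y^0]G(S,\mathcal L)=1$; and (ii) $G(S,\mathcal L)\big|_{z=1}=1$. First I would establish these for $\C^2$: in (\ref{Gseries}) the $\lambda$-summand is $(yl)^{|\lambda|}$ times a factor independent of $z$ and $y$ times a polynomial in $z$ of degree $|\lambda|$ (one factor $\tfrac{1-zw_1^{-b_1}w_2^{-b_2}}{-w_1^{b_1}w_2^{b_2}}$ per box), which gives (i); and since every nonempty $\lambda$ contains the box $(0,0)$, whose factor is $\tfrac{1-z}{-1}$, every such term vanishes at $z=1$, leaving only $\lambda=\emptyset$, which gives (ii). Both properties are preserved by the products over toric fixed points and by the specialization $t_1=t_2=1$, so they hold for $G(S,\mathcal L)$ when $S$ is toric. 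To pass to an arbitrary projective $S$ I would argue exactly as in the proof of the exterior-power corollary: writing $\gamma(S,\mathcal L)=\sum_j a_j\,\gamma(S_j,\mathcal L_j)$ with $(S_j,\mathcal L_j)$ toric generators and setting $G(S,\mathcal L)=\prod_j G(S_j,\mathcal L_j)^{a_j}$, the identity (\ref{simplesym}) holds for general $S$ because $\chi(\mathcal O_{\cdot})$ and $\chi(\mathcal L_{\cdot})$ are linear functionals of the cobordism class and every toric surface has $\chi(\mathcal O)=1$; and (i)--(ii) survive taking rational powers and products (using $[y^0]G(S_j,\mathcal L_j)=1$).

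Granting this, I would finish by extracting coefficients. With $\chi(\mathcal O_S)=1$, (\ref{simplesym}) reads $\chi_{\Sym}(S,\mathcal L)=G(S,\mathcal L)\big/\bigl((1-z)(1-y)^{\chi(\mathcal L)}\bigr)$. By (i) and (ii) I may write $G(S,\mathcal L)=1+(z-1)R$ with $R\in\Q[z][[y]]$, $[y^0]R=0$, and $[y^b]R$ of $z$-degree at most $b-1$, so that
\[
\chi_{\Sym}(S,\mathcal L)=\frac{1}{(1-z)(1-y)^{\chi(\mathcal L)}}-\frac{R}{(1-y)^{\chi(\mathcal L)}}.
\]
The $z^ny^k$-coefficient of the first term equals $\binom{\chi(\mathcal L)+k-1}{k}$, and the $y^k$-coefficient of $R\big/(1-y)^{\chi(\mathcal L)}$ is a $\Q$-linear combination of $[y^b]R$ for $b\le k$, hence a polynomial in $z$ of degree at most $k-1$, whose $z^n$-coefficient is zero as soon as $n\ge k$. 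Therefore $\chi(S^{[n]},\Sym^k\mathcal L^{[n]})=[z^ny^k]\chi_{\Sym}(S,\mathcal L)=\binom{\chi(\mathcal L)+k-1}{k}$ for all $n\ge k$.

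I do not expect a serious obstacle: the argument hinges entirely on the vanishing of the $(0,0)$-box factor at $z=1$ and on bookkeeping of $z$-degrees. The only delicate point is making the cobordism reduction to toric $S$ rigorous -- checking that (\ref{simplesym}) and the properties (i)--(ii) of $G$ genuinely extend to all projective surfaces -- and this is handled precisely as in the proof of the exterior-power corollary above.
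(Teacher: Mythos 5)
Your proof is correct and follows essentially the same route as the paper: both rest on the factorization (\ref{simplesym}), the bound that the $y^k$-coefficient of $G(S,\mathcal L)$ has $z$-degree at most $k$, and the vanishing at $z=1$ of every nonempty-partition summand of (\ref{Gseries}) coming from the $(0,0)$ box. Your decomposition $G=1+(z-1)R$ is just a repackaging of the paper's truncation $G_k$ and the evaluation $G_k(S,\mathcal L)|_{z=1}=1$.
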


\begin{proof}
It suffices to check the result for toric $S$ and equivariant $\mathcal{L}$. As $\chi(\mathcal{O}_S)=1$, by (\ref{simplesym}) we have $$\chi_{\Sym}(S,\mathcal{L})=\frac{1}{(1-z)(1-y)^{\chi(\mathcal{L})}} G(S,\mathcal{L}).$$ Now, fix $k\geq 0$, and set $G_k(S,\mathcal{L})$ to be the sum of all terms of $G(S,\mathcal{L})$ of $y$-degree less than or equal to $k$. As any monomial in $G(S,\mathcal{L})$ has $z$-degree no larger than $y$-degree, we may write $$G_k(S,\mathcal{L})=\sum_{j=0}^{k}g_j(S,\mathcal{L})z^j,$$ where $g_j(S,\mathcal{L})\in \Z[y](t_1,t_2).$ Then, for any $n\geq k$, the $z^ny^k$-term of $\chi_{\Sym}(S,\mathcal{L})$ is the $y^k$-term of the expression $$\frac{1}{(1-y)^{\chi(\mathcal{L})}} \sum_{j=0}^k g_j(S,\mathcal{L}) =\frac{1}{(1-y)^{\chi(\mathcal{L})}} G_k(S,\mathcal{L})|_{z=1}.$$ As every two-dimensional partition $\lambda$ contains the box $(0,0)$, it follows from (\ref{Gseries}) that for any $T$-weights $l, w_1$ and $w_2$, we have $$G_k(\C^2,\mathcal{O}(l))(w_1,w_2)|_{z=1}=0$$ and therefore $$G_k(S,\mathcal{L})|_{z=1}=1.$$ We conclude that $\chi(S^{[n]},\Sym^k \mathcal{L}^{[n]})$ is the $y^k$-coefficient of $1/(1-y)^{\chi(\mathcal{L})}$.
\end{proof}

Even when $\chi(\mathcal{O}_S)\not=1$, the computation in Corollary \ref{chi1} implies that for for fixed $k$, the series $$\sum_{n=1}^{\infty} z^n \chi(S^{[n]},\Sym^k \mathcal{L}^{[n]})$$ is determined by $G_k(S,\mathcal{L}).$ We conclude the following. 

\begin{corollary}
The series $\sum_{n=1}^{\infty} z^n \chi(S^{[n]},\Sym^k \mathcal{L}^{[n]})$ is determined by its first $k$ coefficients.
\end{corollary}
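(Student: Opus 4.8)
The plan is to pin down the \emph{shape} of the series $\Phi_k(z):=\sum_{n\ge 1}z^n\chi(S^{[n]},\Sym^k\mathcal L^{[n]})$ rather than to compute it: I will show that $\Phi_k(z)=P(z)/(1-z)^{\chi(\mathcal O_S)}$ for a polynomial $P$ with $\deg P\le k$ and $P(0)=0$ when $k\ge1$ (the case $k=0$ is trivial), and then observe that a power series of this restricted form is automatically recovered from its coefficients of $z^1,\dots,z^k$. This makes precise the remark, preceding the statement, that $\Phi_k$ is governed by $G_k(S,\mathcal L)$, now uniformly in $S$.

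First I would pass from an arbitrary projective $(S,\mathcal L)$ to toric pieces. Setting $G(S,\mathcal L):=\chi_{\Sym}(S,\mathcal L)\,(1-z)^{\chi(\mathcal O_S)}(1-y)^{\chi(\mathcal L)}$ — which for toric $S$ agrees with the $G(S,\mathcal L)$ of (\ref{simplesym}) — the linearity of $\chi(\mathcal O_S)$ and of $\chi(\mathcal L)$ in the cobordism invariants $\gamma(S,\mathcal L)$ (Noether's formula and Hirzebruch--Riemann--Roch) converts the multiplicativity (\ref{redtoric}) of $\chi_{\Sym}$ into multiplicativity of $G$, so $G(S,\mathcal L)=\prod_i G(S_i,\mathcal L_i)^{a_i}$ for any decomposition $\gamma(S,\mathcal L)=\sum_i a_i\gamma(S_i,\mathcal L_i)$ into toric generators, and for toric $S_i$ equation (\ref{simplesym}) writes $G(S_i,\mathcal L_i)$ as a finite product of the $\C^2$-series $G(\C^2,\mathcal O(l))(w_1,w_2)$ specialized at $t_1=t_2=1$. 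Next I would read off from the explicit formula (\ref{Gseries}) that the partition-$\lambda$ summand of $G(\C^2,\mathcal O(l))$ carries $y$-degree exactly $|\lambda|$ and $z$-degree at most $|\lambda|$; hence every monomial of $G(\C^2,\mathcal O(l))$ has $z$-degree $\le$ $y$-degree, with constant term $1$. This property is stable under products and, via the geometric series, under inverses, so it holds for $G(S,\mathcal L)$ for every $S$. Consequently the truncation $G_k(S,\mathcal L)$ of $G(S,\mathcal L)$ to $y$-degree $\le k$ is a polynomial in $z$ of degree $\le k$. Extracting the $y^k$-coefficient of $\chi_{\Sym}(S,\mathcal L)=G(S,\mathcal L)/\bigl((1-z)^{\chi(\mathcal O_S)}(1-y)^{\chi(\mathcal L)}\bigr)$ then uses only $G_k(S,\mathcal L)$ and yields $[y^k]\chi_{\Sym}(S,\mathcal L)=P(z)/(1-z)^{\chi(\mathcal O_S)}$ with $P$ a polynomial of degree $\le k$; comparing its $z^0$-coefficient with $\chi(S^{[0]},\Sym^k\mathcal L^{[0]})=0$ (as $\mathcal L^{[0]}=0$ and $k\ge1$) forces $P(0)=0$, whence $\Phi_k(z)=P(z)/(1-z)^{\chi(\mathcal O_S)}$. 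Finally, from $P(z)=(1-z)^{\chi(\mathcal O_S)}\Phi_k(z)$ and $\deg P\le k$ the polynomial $P$ equals the degree-$\le k$ truncation of the right-hand side, which involves only the degree-$\le k$ truncation of $(1-z)^{\chi(\mathcal O_S)}$ (determined by $S$) and the coefficients $\chi(S^{[n]},\Sym^k\mathcal L^{[n]})$ for $1\le n\le k$; this determines $P$, and hence $\Phi_k$.

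I expect the only real subtlety to be the first step, namely propagating the structural form from the toric generators to a general $S$: because the cobordism decomposition $\gamma(S,\mathcal L)=\sum a_i\gamma(S_i,\mathcal L_i)$ typically has negative multiplicities, one must check that inverting the series $G(\C^2,\mathcal O(l))$ preserves the ``$z$-degree $\le$ $y$-degree'' constraint, and that the $(1-z)$- and $(1-y)$-prefactors recombine correctly, which is exactly where the linearity of $\chi(\mathcal O_S)$ and $\chi(\mathcal L)$ in $\gamma$ enters. Everything after that is bookkeeping; in particular the single normalization $P(0)=0$, coming from the empty Hilbert scheme, is what makes exactly $k$ rather than $k+1$ coefficients suffice.
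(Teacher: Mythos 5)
Your proof is correct and follows the paper's own route: equation (\ref{simplesym}) together with the observation that every monomial of $G(S,\mathcal{L})$ has $z$-degree at most its $y$-degree, so that the $y^k$-coefficient of $\chi_{\Sym}(S,\mathcal{L})$ is a polynomial of degree $\le k$ in $z$ divided by $(1-z)^{\chi(\mathcal{O}_S)}$. The two points you make explicit --- that $G$ is multiplicative under the cobordism decomposition because $\chi(\mathcal{O}_S)$ and $\chi(\mathcal{L})$ are linear in $\gamma(S,\mathcal{L})$, so the structural form transfers from the toric generators to arbitrary projective $S$, and that the normalization $P(0)=0$ from the empty Hilbert scheme is what lets $k$ rather than $k+1$ coefficients suffice --- are exactly the details the paper leaves implicit.
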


We conclude with a computation of the series 

$$\sum_{n=0}^{\infty} z^ny^k \chi(S^{[n]},\Sym^k \mathcal{L}^{[n]})$$ up to $y$-degree 3. The same approach can be used up to any fixed $y$-degree.

\begin{corollary}
We have 
\begin{align}
\label{Sym0}&\chi(S^{[n]},\mathcal{O}_{S^{[n]}})=\binom{\chi(\mathcal{O}_S)+n-1}{n}\\
\label{Sym1} &\chi(S^{[n]},\mathcal{L}^{[n]})=\binom{\chi(\mathcal{O}_S)+n-2}{n-1}\chi(\mathcal{L}) \\
\label{Sym2}&\chi(S^{[n]},\Sym^2\mathcal{L}^{[n]})=\binom{\chi(\mathcal{O}_S)+n-3}{n-1}\chi(\mathcal{L}^{\otimes 2})+\binom{\chi(\mathcal{O}_S)+n-3}{n-2}\binom{\chi(\mathcal{L})+1}{2} \\
\label{Sym3}&\chi(S^{[n]},\Sym^3\mathcal{L}^{[n]})=\binom{\chi(\mathcal{O}_S)+n-3}{n-1}\chi(\mathcal{L}^{\otimes 3}) \nonumber\\
&\hspace{1.2in} + \binom{\chi(\mathcal{O}_S)+n-4}{n-2}\big(\chi(\mathcal{L}^{\otimes 2})\chi(\mathcal{L})-\chi(\mathcal{L}^{\otimes 3}\otimes \T S^{\vee})\big)\nonumber \\ & \hspace{1.2in}+\binom{\chi(\mathcal{O}_S)+n-4}{n-3}\binom{\chi(\mathcal{L})+2}{3} 
\end{align}
\end{corollary}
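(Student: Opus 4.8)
The plan is to exploit the factorization (\ref{simplesym}), $\chi_{\Sym}(S,\mathcal{L})=(1-z)^{-\chi(\mathcal{O}_S)}(1-y)^{-\chi(\mathcal{L})}\,G(S,\mathcal{L})$, and to compute the correction factor $G(S,\mathcal{L})$ modulo $y^4$. Since the $y^0$-coefficient of $G$ is $1$, equation (\ref{Sym0}) follows at once, being the $z^n$-coefficient of $(1-z)^{-\chi(\mathcal{O}_S)}$; for the other three formulas I will need the coefficients of $y$, $y^2$ and $y^3$ in $G(S,\mathcal{L})$. By Section \ref{toricred} it suffices to treat toric $(S,\mathcal{L})$, and then by Section \ref{eqred} we have $G(S,\mathcal{L})=\big(\prod_i G(\C^2,\mathcal{O}(l_i))(w_{i_1},w_{i_2})\big)\big|_{t_1=t_2=1}$, where the product runs over the torus-fixed points $s_i$, with tangent weights $w_{i_1},w_{i_2}$ and $\mathcal{L}$-weights $l_i$, and $G(\C^2,\mathcal{O}(l))$ is the explicit sum (\ref{Gseries}).

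First I would evaluate $G(\C^2,\mathcal{O}(l))(w_1,w_2)$ modulo $y^4$ by restricting the sum (\ref{Gseries}) to the seven partitions $\emptyset,(1),(2),(1^2),(3),(2,1),(1^3)$, using (\ref{tanchar}) for the tangent characters $T_\lambda$. Passing to logarithms turns the product over fixed points into a sum, $\log G(S,\mathcal{L})=\sum_i\log G(\C^2,\mathcal{O}(l_i))(w_{i_1},w_{i_2})\big|_{t_1=t_2=1}$. As in \cite{EGL}, one expects each $y^j$-coefficient of $\log G(\C^2,\mathcal{O}(l))(w_1,w_2)$ to be a rational function $Q_j(z,l,w_1,w_2)/\big((1-w_1^{-1})(1-w_2^{-1})\big)$ with $Q_j$ a Laurent polynomial in $w_1,w_2$ whose $l$-dependence is a sum of monomials $l^m$ (the higher-order poles coming from the arm/leg denominators $\Lambda^{\bullet}(T_\lambda^{\vee})$ must cancel in the logarithm). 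Then the localization identity $\sum_i \mathrm{wt}_{s_i}(V)/\big((1-w_{i_1}^{-1})(1-w_{i_2}^{-1})\big)\big|_{t_1=t_2=1}=\chi(S,V)$, used in the form already invoked in Section \ref{eqred}, turns each fixed-point sum into an explicit $\Q[z]$-combination of $\chi(\mathcal{O}_S),\chi(\mathcal{L}),\chi(\mathcal{L}^{\otimes 2}),\chi(\mathcal{L}^{\otimes 3})$ and $\chi(\mathcal{L}^{\otimes 3}\otimes\T S^{\vee})$. Alternatively, since (\ref{simplesym}) together with \cite[Thm 4.2]{EGL} shows $\log G(S,\mathcal{L})$ is $\Q[[y,z]]$-linear in the four Chern numbers $c_1(\mathcal{L})^2,\,c_1(\mathcal{L})c_1(S),\,c_1(S)^2,\,c_2(S)$, one may instead evaluate $G$ modulo $y^4$ on the four generators (\ref{cobordismgenerators}) --- a finite calculation over the $3$, resp.\ $4$, fixed points of $\P^2$ and $\P^1\times\P^1$ --- and solve a $4\times 4$ linear system.

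It then remains to exponentiate $\log G(S,\mathcal{L})$, multiply by $(1-z)^{-\chi(\mathcal{O}_S)}(1-y)^{-\chi(\mathcal{L})}$, extract the $z^ny^k$-coefficient for $k=1,2,3$ (only finitely many terms contribute once $n\ge k$, since the $y$-degree of $G$ never exceeds its $z$-degree, cf.\ the discussion preceding Corollary \ref{chi1}), and rewrite the outcome as the stated binomials using the Riemann--Roch expressions $\chi(\mathcal{L}^{\otimes m})=\tfrac{m^2c_1(\mathcal{L})^2-mc_1(\mathcal{L})c_1(S)}{2}+\chi(\mathcal{O}_S)$ and the analogous one for $\chi(\mathcal{L}^{\otimes 3}\otimes\T S^{\vee})$, together with elementary binomial identities. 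The main obstacle will be the $y^3$ computation: organizing the rational functions produced by the size-$3$ partitions, checking the promised cancellation of poles after taking logarithms, and --- most delicately --- recognizing the resulting degree-$3$ polynomial in the Chern numbers as the given combination, since the term $\chi(\mathcal{L}^{\otimes 3}\otimes\T S^{\vee})$ shows the answer genuinely involves $c_2(S)$ beyond what $\chi(\mathcal{O}_S)$ and the $\chi(\mathcal{L}^{\otimes m})$ record, so the final rewriting demands careful bookkeeping of every degree-$2$ and degree-$3$ contribution.
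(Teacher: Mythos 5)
Your proposal follows essentially the same route as the paper: reduce to toric $(S,\mathcal{L})$ and then to equivariant $\C^2$, compute $G(\C^2,\mathcal{O}(l))$ over partitions of size at most $3$, recognize it modulo $y^4$ as a plethystic exponential (equivalently, take logarithms and observe that the higher-order poles cancel, leaving a single $(1-w_1^{-1})(1-w_2^{-1})$ denominator), apply localization to convert the fixed-point sums into the global Euler characteristics $\chi(\mathcal{L}^{\otimes m})$ and $\chi(\mathcal{L}^{\otimes 3}\otimes\T S^{\vee})$, and extract coefficients from (\ref{simplesym}). This is exactly the paper's computation culminating in (\ref{Gdeg3}), so the argument is correct and not materially different.
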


\begin{proof}

It again suffices to prove the result for toric $S$ and equivariant $\mathcal{L}$. We compute the series $G(S,\mathcal{L})$ up to $y$-degree $3$. Again, let $\{s_i\}, \{w_{i_1},w_{i_2}\}$ and $\{\l_i\}$ be the torus-fixed points, tangent weights at fixed points, and weights of $\mathcal{L}$ at fixed points, and recall (\ref{Gdef}).

 Computing $G(\C^2,\mathcal{O}(l))$ explicitly up to $y$-degree $3$, one obtains \begin{align*}G(\C^2,\mathcal{O}(l))(t_1,t_2)=\Sym^{\bullet}&\Big( \frac{-yl(1-z)+y^2l^2(z-z^2)+y^3l^3(z-z^2)(1-z(t_1^{-1}+t_2^{-1}))}{(1-t_1^{-1})(1-t_2^{-1})}  \Big)\\&+O(y^4).\end{align*} Rewriting the right-hand side in terms of Euler characteristics of equivariant bundles over $\C^2$, we have \begin{align}\label{Gdeg3} G(\C^2,\mathcal{O}(l))(t_1,t_2)=\Sym^{\bullet}\Big(&-y(1-z)\chi(\C^2,\mathcal{O}(l))+y^2(z-z^2)\chi(\C^2,\mathcal{O}(l)^{\otimes 2})\nonumber\\&+y^3(z-z^2)\big(\chi(\C^2,\mathcal{O}(l)^{\otimes 3})-z\chi(\C^2,\mathcal{O}(l)^{\otimes 3}\otimes \T {\C^2}^{\vee})\big)\Big)\nonumber \\&+O(y^4),\end{align} where, in all Euler characteristics on the right hand side, the torus $T$ scales the coordinate axes of $\C^2$ by $t_1$ and $t_2$.
 Though not necessary for our argument, we remark that one expects an equality of the form (\ref{Gdeg3}) on general grounds. To be more precise, it is a consequence of \cite[Lemma 5.3.4]{O} that there exist $K$-theory classes $\mathcal{F}_{k,n}\in K_T(\C^2)$ for which $$G(\C^2,\mathcal{O}(l))(t_1,t_2)=\Sym^{\bullet}\Big( \sum_{k\geq 1,n\geq 0}y^kz^n \chi(\C^2,\mathcal{F}_{k,n})\Big).$$
 
Now, recall from (\ref{Gdef}) that  \begin{align*} G(S,\mathcal{L})&=\prod_{i}G(\C^2,\mathcal{O}(l_i))(w_{i_1},w_{i_2})|_{t_1=1,t_2=1}.\end{align*} 
 By (\ref{Gdeg3}) and equivariant localization, we obtain
\begin{align*} G(S,\mathcal{L})& = \Sym^{\bullet}\Big(-y(1-z)\chi(S,\mathcal{L})+y^2(z-z^2)\chi(S,\mathcal{L}^{\otimes 2})\\& \ \ \ \  \ \ \ \  \ \ \ \ \ \ +y^3(z-z^2)\big(\chi(S,\mathcal{L}^{\otimes 3})-z\chi(S,\mathcal{L}^{\otimes 3}\otimes \T S^{\vee})\big)\Big)+O(y^4) \\ & =\big(\frac{1-y}{1-zy}\big)^{\chi(\mathcal{L})} \big(\frac{1-z^2y^2}{1-zy^2}\big)^{\chi(\mathcal{L}^{\otimes 2})}\big(\frac{1-z^2y^3}{1-zy^3}\big)^{\chi(\mathcal{L}^{\otimes 3})}\big(\frac{1-z^2y^3}{1-z^3y^3}\big)^{\chi(\mathcal{L}^{\otimes 3}\otimes \T S^{\vee})}\\ &  \ \ \ \ \ +O(y^4).\end{align*}
 By (\ref{simplesym}) we have $$\chi_{\Sym}(S,\mathcal{L})=\frac{1}{(1-z)^{\chi(\mathcal{O}_S)}(1-y)^{\chi(\mathcal{L})}}G(S,\mathcal{L}).$$  The corollary follows.

\end{proof}

The individual cohomology groups of bundles $\Sym^{k}\mathcal{L}^{[n]}$ have also been studied in connection with the strange duality conjecture for 
$\P^2$. Namely, equation (\ref{Sym1}) is shown in \cite[Prop 5.6(ii)]{EGL}, and (\ref{Sym2}, \ref{Sym3}) are consequences of \cite[Thm 5.25]{Sc}. In the cases when $n=2$ or $3$, equation (\ref{Sym2}) is also a consequence of \cite[Thm 1.1, 1.2]{Da}. We remark that the argument of this section can also be used to compute the values $\chi(S^{[n]},\Sym^k \mathcal{L}_1^{[n]}\otimes \mathcal{D}_{\mathcal{L}_2})$ studied for example in \cite{Da, Sc}; here, given a line bundle $\mathcal{L}$ on $S$, the line bundle $\mathcal{D}_{\mathcal{L}}$ denotes the pullback under the Hilbert-Chow morphism of the bundle $\mathcal{L}^{\boxtimes n}/\mathfrak{S}^n$ on $\Sym^{n}S$.

\subsection{Rank 2 vector bundles on $S$}
As for line bundles, to any rank $r$ vector bundle $\V$ on $S$ there is an associated rank $rn$ tautological vector bundle $\V^{[n]}$ on $S^{[n]}$ whose fiber over $Y\in S^{[n]}$ is $H^0(\mathcal{O}_Y\otimes \V)$.

Let $\V$ denote a rank $2$ vector bundle on $S$. In this section we study the following series: 

$$\chi_{\Lambda}(S,\V)=\sum_{n,k\geq 0} z^n(-m)^k \chi(S^{[n]},\Lambda^k \V^{[n]}).$$

As in the previous section, the author is not aware of a concise closed formula for an arbitrary term of this series. However, just as for $\chi_{\Sym}(S,\mathcal{L})$, Theorem \ref{symmetry} can be used to compute the series up to any fixed degree in $m$. For example, we show the following: 
\begin{corollary}
For a projective surface $S$ with rank 2 vector bundle $\V$, we have 
\begin{align} 
&\label{Wedgerk21}\chi(S^{[n]},\V^{[n]})= \binom{\chi(\mathcal{O}_S)+n-2}{n-1}\chi(\V) \\
&\chi(S^{[n]},\Lambda^2\V^{[n]})=\binom{\chi(\mathcal{O}_S)+n-3}{n-2}\binom{\chi(\V)}{2}+\binom{\chi(\mathcal{O}_S)+n-3}{n-1}\chi(\Lambda^2\V) \\
&\chi(S^{[n]}, \Lambda^3\V^{[n]}) \nonumber=\binom{\chi(\mathcal{O}_S)+n-4}{n-3}\binom{\chi(\V)}{3}\\ &\hspace{1.2in}+\binom{\chi(\mathcal{O}_S)+n-4}{n-2}\big(\chi(\V)\chi(\Lambda^2\V)-\chi(\V\otimes\Lambda^2\V)\big)
\end{align}
\end{corollary}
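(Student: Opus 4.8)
The plan is to follow the same strategy used for $\chi_{\Sym}(S,\mathcal{L})$ in the previous subsection, replacing the line bundle $\mathcal{L}$ by the rank 2 bundle $\mathcal{V}$ and carrying the whole computation to third order in $m$. By the Ellingsrud--G\"ottsche--Lehn cobordism argument (Section \ref{toricred}, which extends verbatim to higher-rank bundles since the relevant universal series depend only on the Chern numbers $c_1(\mathcal{V})^2, c_1(\mathcal{V})c_1(S), c_2(\mathcal{V}), c_1(S)^2, c_2(S)$) it suffices to prove the identities for toric $S$ and $T$-equivariant $\mathcal{V}$. Then, by equivariant localization on $S^{[n]}$ exactly as in Section \ref{eqred}, the series $\chi_{\Lambda}(S,\mathcal{V})$ factors over the torus-fixed points $\{s_i\}$ of $S$ as $\prod_i \chi_{\Lambda}(\C^2,\mathcal{O}(l_{i,1})\oplus\mathcal{O}(l_{i,2}))(w_{i_1},w_{i_2})$, where $l_{i,1},l_{i,2}$ are the $T$-weights of the two line summands of $\mathcal{V}|_{s_i}$ in a $T$-equivariant splitting near $s_i$. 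Since the tautological functor is additive on direct sums, $\chi_{\Lambda}(\C^2,\mathcal{O}(l_1)\oplus\mathcal{O}(l_2))(w_1,w_2)$ is the Euler characteristic of $\Lambda^\bullet_m\big(\mathcal{O}(l_1)^{[n]}\oplus\mathcal{O}(l_2)^{[n]}\big)=\Lambda^\bullet_m\mathcal{O}(l_1)^{[n]}\otimes\Lambda^\bullet_m\mathcal{O}(l_2)^{[n]}$, which is the localization expression $\sum_\lambda \frac{z^{|\lambda|}}{\Lambda^\bullet(T_\lambda^\vee)}\prod_{\square\in\lambda}(1-ml_1 w_1^{-b_1}w_2^{-b_2})(1-ml_2 w_1^{-b_1}w_2^{-b_2})$. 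This is precisely a specialization of $F$: it equals $F(zm^2l_1l_2,\tfrac{1}{ml_1},\tfrac{1}{ml_2},0,0)(w_1,w_2)$ up to a monomial bookkeeping, so Theorem \ref{symmetry} and Proposition \ref{denominator} apply to it.

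The next step is to extract from $F$ the piece relevant to $\chi_\Lambda$ and feed it through the symmetry. As in the symmetric-power case, apply Theorem \ref{symmetry} to write the relevant specialization of $F$ as a ratio: the denominators are controlled by Proposition \ref{denominator} and give the plethystic-exponential prefactor $\Sym^\bullet\big(\tfrac{z-zml_1-zml_2+zm^2l_1l_2}{(1-w_1^{-1})(1-w_2^{-1})}\big)$, which under $t_1=t_2=1$ becomes $\Sym^\bullet(z\chi(\mathcal{O}_S)-zm\chi(\mathcal{V})+z\binom{m}{2}\chi(\Lambda^2\mathcal{V})\text{-type terms})$, producing the $\binom{\chi(\mathcal{O}_S)+n-\cdots}{\cdots}$ factors and the leading terms $\binom{\cdot}{\cdot}\chi(\mathcal{V})$, $\binom{\cdot}{\cdot}\binom{\chi(\mathcal{V})}{2}$, $\binom{\cdot}{\cdot}\binom{\chi(\mathcal{V})}{3}$ in the stated formulas. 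The remaining ``numerator'' factor $H(\C^2,\mathcal{O}(l_1)\oplus\mathcal{O}(l_2))(w_1,w_2)$ — the analog of $G$ — is then expanded as a power series in $m$ up to $m^3$ directly from its localization formula; I expect, by the same divisibility argument used for $G_k$ (every partition contains the box $(0,0)$, and low-$m$-degree terms have bounded $z$-degree), that its $m^k$ coefficient is a polynomial in $z$ of degree $\le k$ with an explicit leading behavior. Rewriting the $\C^2$ Euler characteristics appearing in the coefficients in terms of $\chi(\C^2,\mathcal{O}(l_1)\oplus\mathcal{O}(l_2))$, $\chi(\C^2,\Lambda^2(\cdots))$, $\chi(\C^2,\mathrm{Sym}^2(\cdots))$, $\chi(\C^2,(\cdots)\otimes\mathcal{T}^\vee)$, etc., taking the product over fixed points, and specializing $t_1=t_2=1$ via equivariant localization converts these into the global Euler characteristics $\chi(\mathcal{V})$, $\chi(\Lambda^2\mathcal{V})$, $\chi(\mathcal{V}\otimes\Lambda^2\mathcal{V})$, $\chi(\mathcal{V})\chi(\Lambda^2\mathcal{V})$ appearing in \eqref{Wedgerk21} and its successors. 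Finally, reading off the $z^ny^{\,\cdot}$-coefficients (here there is no $y$; one reads off $z^nm^k$-coefficients) of $\frac{1}{(1-z)^{\chi(\mathcal{O}_S)}}\cdot H(S,\mathcal{V})$ gives the claimed binomial-coefficient identities.

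The main obstacle is the third-order ($m^3$) computation of the numerator series $H(\C^2,\mathcal{O}(l_1)\oplus\mathcal{O}(l_2))$: one must expand the two-variable infinite product $\prod_{\square\in\lambda}(1-ml_1 w_1^{-b_1}w_2^{-b_2})(1-ml_2 w_1^{-b_1}w_2^{-b_2})$ against the tangent-weight denominator, sum over all $\lambda$ with $|\lambda|\le 3$, and then correctly repackage the resulting Laurent polynomials in $w_1,w_2,l_1,l_2$ as a plethystic exponential of a combination of equivariant Euler characteristics of natural bundles (so that the later product-over-fixed-points step collapses cleanly). Getting the precise form — in particular identifying the term $-\chi(\mathcal{V}\otimes\Lambda^2\mathcal{V})$ in the $\Lambda^3$ formula and verifying it is symmetric in $l_1\leftrightarrow l_2$ — requires care; the rank 2 case is genuinely more involved than the line bundle case because $\Lambda^2\mathcal{V}$ and $\mathcal{V}\otimes\Lambda^2\mathcal{V}$ both enter, and one must check that the $a_1/a_2$-type framing ambiguity in the equivariant splitting of $\mathcal{V}$ near each $s_i$ drops out after the global specialization. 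I would organize this by first doing the $\C^2$ expansion symbolically in $(l_1,l_2,t_1,t_2)$, matching it term-by-term against $\Sym^\bullet$ of an ansatz built from $\chi(\C^2,\mathcal{O}(l_1)\oplus\mathcal{O}(l_2))$, $\chi(\C^2,\mathcal{O}(l_1+l_2))$ (for $\Lambda^2$), $\chi(\C^2,\mathcal{O}(l_1+l_2)\otimes\mathcal{T}^\vee)$, and products thereof, and only then passing to global $S$.
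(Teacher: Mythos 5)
Your overall strategy coincides with the paper's: reduce to toric $S$ by the Ellingsrud--G\"ottsche--Lehn universality (with the extra Chern number $c_2(\V)$), localize to a product over fixed points of a rank-two series on $\C^2$, realize that series as a specialization of $F$, and push it through Theorem \ref{symmetry} and Proposition \ref{denominator} before expanding to order $m^3$. However, there is a concrete error at the pivotal step: the localization sum
$\sum_{\lambda}\frac{z^{|\lambda|}}{\Lambda^{\bullet}(T_{\lambda}^{\vee})}\prod_{\square\in\lambda}(1-ml_1w_1^{-b_1}w_2^{-b_2})(1-ml_2w_1^{-b_1}w_2^{-b_2})$
is \emph{not} equal to $F(zm^2l_1l_2,\tfrac{1}{ml_1},\tfrac{1}{ml_2},0,0)$ up to monomial bookkeeping. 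Computing per box, that specialization of (\ref{Fdef}) produces $\frac{(ml_1-t^{b})(ml_2-t^{b})}{-t^{b}}$ where the target is $\frac{(ml_1-t^{b})(ml_2-t^{b})}{t^{2b}}$, so the two differ by the factor $\prod_{\square\in\lambda}(-t_1^{b_1}t_2^{b_2})$, i.e.\ by a twist by $(-1)^n\Lambda^n(\mathcal{O}^{[n]})^{\vee}$. This is a partition-dependent class, not a change of variables, so Theorem \ref{symmetry} does not apply to the series you actually need. The structural reason is that the denominator $y-t^{b}$ of $F$ can convert only \emph{one} of the two ``upward'' slots $m_1,m_2$ into a downward factor $(1-ml_it^{-b})$; the second downward factor must come from the $m_3y$ slot. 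The correct specialization, used in the paper, is $F(zml_1,\tfrac{1}{ml_1},0,\tfrac{ml_2}{y},y)\big|_{y=0}$ as in (\ref{r2y0}), after which Theorem \ref{symmetry} exchanges the box-counting and Segre variables and the $m_2$-twisted sum becomes a series in $ml_2$ whose order-$m^k$ part involves only partitions with $|\lambda|\leq k$.

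Beyond this, the computational core is only sketched: the order-$m^3$ expansion of the ``numerator'' series, its repackaging as $\Sym^{\bullet}$ of equivariant Euler characteristics, and in particular the emergence of the term $-\chi(\V\otimes\Lambda^2\V)$ in the $\Lambda^3$ formula are exactly where the content of the corollary lies, and you defer all of it. Your proposed denominator prefactor $z-zm(l_1+l_2)+zm^2l_1l_2$ is also not what the duality produces (the correct $m^2$ coefficient is $(1-z)zl_1l_2$, and the difference is precisely what generates the second binomial term in the $\Lambda^2$ formula), so the bookkeeping between ``prefactor'' and ``correction series'' would need to be redone from the correct specialization. The plan is repairable, but as written the key identification fails and the identities are not derived.
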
 

\begin{proof}

The framework of Sections \ref{toricred} and \ref{eqred} also apply to series of the form (\ref{bigseries}) when $\mathcal{L}$ is replaced by a vector bundle of arbitrary rank. That is by \cite[Thm 4.2]{EGL}, there exist universal series $B_i\in \Q[m,z],$ for $i=1,\ldots, 5$ such that $$\chi_{\Lambda}(S,\mathcal{V}) = \exp(c_1(\mathcal{\V})^2 B_1+ c_1(\mathcal{\V})c_1(S)B_2+ c_1(S)^2B_3+c_2(S)B_4+c_2(\V)B_5).$$ So, as in Section \ref{toricred}, the series $\chi_{\Lambda}(S,\V)$ is determined for arbitrary projective $S$ and rank 2 bundle $\V$ by its values for toric $S$ and equivariant $\mathcal{V}$. 

Then, as in Section \ref{eqred}, for such $(S,\V)$, the series can be written in terms of an equivariant version when $S=\C^2$. That is, given two $T$-weights $l_1,l_2$, and an action of $T$ on $\C^2$ such that the coordinate axes are scaled by weights $w_1, w_2$, we set \begin{align}\label{rk2ser} \chi_{\Lambda}(\C^2,\mathcal{O}(l_1)\oplus \mathcal{O}(l_2))(w_1,w_2)&=\sum_{n,k\geq 0} z^n(-m)^k \chi\big((\C^2)^{[n]},\Lambda^k \big(\mathcal{O}(l_1)\oplus \mathcal{O}(l_2)\big)^{[n]}\big)\nonumber\\& =\sum_{\lambda}\frac{z^{|\lambda|}}{\Lambda^{\bullet}\big( T^{\vee}_{\lambda}(w_1,w_2)\big)}\prod_{\square\in\lambda}\begin{array}{l}(1-ml_1t_1^{-b_1}t_2^{-b_2})\\ \cdot (1-ml_2t_1^{-b_1}t_2^{-b_2})\end{array},\end{align} where the right hand side of the first line is regarded as a series of $T$-equivariant Euler characteristics, and the second equality is an application of equivariant localization. Then, if $\{s_i\}, \{w_{i_1},w_{i_2}\}$ and $\{l_{i_1}, l_{i_2}\}$ are the torus-fixed points, tangent weights at $s_i$, and weights of $\V$ at $s_i$ respectively, we have \begin{align}\label{rk2toric}\chi_{\Lambda}(S,\mathcal{V})= \prod_{i} \chi_{\Lambda}(\C^2,\mathcal{O}(l_{i_1})\oplus \mathcal{O}(l_{i_2}))(w_{i_1},w_{i_2})\Big|_{t_1=1,t_2=1}.\end{align}

Note that \begin{align}\label{r2y0}\chi_{\Lambda}(\C^2,\mathcal{O}(l_1)\oplus \mathcal{O}(l_2))(t_1,t_2)=F(zml_1,\frac{1}{ml_1}, 0, \frac{ml_2}{y}, y)(t_1,t_2)|_{y=0};\end{align} in particular, the specialization $y=0$ on the right hand side is well-defined. By Theorem \ref{symmetry}, we have \begin{align}\label{rk2sym}&F(zml_1,\frac{1}{ml_1}, 0, \frac{ml_2}{y}, y)(t_1,t_2)\nonumber\\&=\frac{F(zml_1,\frac{1}{ml_1}, 0, \frac{ml_2}{y}, 0)(t_1,t_2)}{F(y,\frac{ml_2}{y}, 0, \frac{1}{ml_1}, 0)(t_1,t_2)}F(y,\frac{ml_2}{y}, 0, \frac{1}{ml_1}, zml_1)(t_1,t_2). \end{align}

By Proposition \ref{denominator}, we have \begin{align}\label{rk2d1}F(zml_1,\frac{1}{ml_1}, 0, \frac{ml_2}{y}, 0)(t_1,t_2)=\Sym^{\bullet}\Big(\frac{z(1-ml_1)}{(1-t_1^{-1})(1-t_2^{-1})}\Big)\end{align} and 
\begin{align*}F(y,\frac{ml_2}{y}, 0, \frac{1}{ml_1}, 0)(t_1,t_2)=\Sym^{\bullet}\Big(\frac{ml_2(1-\frac{y}{ml_2})}{(1-t_1^{-1})(1-t_2^{-1})}\Big),\end{align*} so that  
\begin{align}\label{rk2d2}F(y,\frac{ml_2}{y}, 0, \frac{1}{ml_1}, 0)(t_1,t_2)\Big|_{y=0}=\Sym^{\bullet}\Big(\frac{ml_2}{(1-t_1^{-1})(1-t_2^{-1})}\Big).\end{align}
Note that \begin{align}\label{rk2swap}F(y,\frac{ml_2}{y}, 0, \frac{1}{ml_1}, zml_1)(t_1,t_2)=\sum_{\lambda} \frac{(ml_2)^{|\lambda|}}{\Lambda^{\bullet}T^{\vee}_{\lambda}}\prod_{\square\in\lambda}\frac{(1-\frac{y}{ml_2}t_1^{-b_1}t_2^{-b_2})(1-zt_1^{-b_1}t_2^{-b_2})}{(1-zml_1t_1^{-b_1}t_2^{-b_2})}.\end{align} 
In particular, each factor on the right hand side of (\ref{rk2sym}) is well-defined under the specialization $y=0$, and only finitely many terms of (\ref{rk2swap}) contribute to any fixed degree in $m$. Expanding, for example, up to degree 3 in $m$, we have \begin{align} \label{rk2swap2}F&(y,\frac{ml_2}{y}, 0, \frac{1}{ml_1}, zml_1)(t_1,t_2)\Big|_{y=0}\nonumber \\&=\Sym^{\bullet}\Big(\frac{m(1-z)l_2+m^2(1-z)zl_1l_2+m^3(1-z)z^2(l_1+l_2)l_1l_2}{(1-t_1^{-1})(1-t_2^{-1})}\Big)+O(m^4).\end{align}

Combining (\ref{rk2d1}), (\ref{rk2d2}) and (\ref{rk2swap2}), we conclude from (\ref{r2y0}) and (\ref{rk2sym}) that \begin{align}\chi_{\Lambda}&(\C^2,\mathcal{O}(l_1)\oplus \mathcal{O}(l_2))(t_1,t_2)\nonumber\\&=\Sym^{\bullet}\Big(\frac{z-mz(l_1+l_2)+m^2(1-z)zl_1l_2+m^3(1-z)z^2(l_1+l_2)l_1l_2}{(1-t_1^{-1})(1-t_2^{-1})}\Big)+O(m^4).\end{align}

Writing in terms of Euler characteristics over $\C^2$ (where $T$ acts by scaling the coordinate axes by $t_1$ and $t_2$), we have \begin{align*}\chi_{\Lambda}(\C^2,\mathcal{O}(l_1)\oplus \mathcal{O}&(l_2))(t_1,t_2)\nonumber\\=\Sym^{\bullet}\Big(&z\chi(\C^2,\mathcal{O})-mz\chi(\C^2,\mathcal{O}(l_1)\oplus\mathcal{O}(l_2))\\&+m^2(1-z)z\chi\big(\C^2,\Lambda^2\big(\mathcal{O}(l_1)\oplus\mathcal{O}(l_2)\big)\big)\\&+m^3(1-z)z^2\chi\big(\C^2,\mathcal{O}(l_1)\oplus\mathcal{O}(l_2)\otimes\Lambda^2\big(\mathcal{O}(l_1)\oplus\mathcal{O}(l_2)\big)\big)\Big)+O(m^4).\end{align*}

So, from (\ref{rk2toric}) and equivariant localization, we conclude that \begin{align*}\chi_{\Lambda}(S,\mathcal{V})&=\Sym^{\bullet}\Big(z\chi(S,\mathcal{O}_S)-mz\chi(S,\V)+m^2(1-z)z\chi(S,\Lambda^2\V)\\&\ \ \ \ \ \ \ \ \ \ \ \ \ \ +m^3(1-z)z^2\chi(S,\V\otimes\Lambda^2\V)\Big)+O(m^4)\\& =\frac{(1-zm)^{\chi(\V)}}{(1-z)^{\chi(\mathcal{O}_S)}}\Big(\frac{1-z^2m^2}{1-zm^2}\Big)^{\chi(\Lambda^2\V)}\Big(\frac{1-z^3m^3}{1-z^2m^3}\Big)^{\chi(\V\otimes\Lambda^2\V)}+O(m^4).\end{align*}

The corollary follows. Note that (\ref{Wedgerk21}) (and the same statement for $\V$ of arbitrary rank) also follows from (\ref{Sym1}).

\end{proof}

It would be interesting if the techniques of this paper could be applied to study Schur functors of other bundles over the Hilbert scheme of points on a surface $S$ and moduli of higher rank sheaves that are of interest in algebraic geometry and representation theory. Examples include the determinants of tautological bundles associated to higher rank vector bundles on $S$ (which appear in the ``Verlinde series'' studied in \cite{J,MOP2}), tangent and cotangent bundles (which are relevant to the monopole branches of Vafa-Witten invariants studied in \cite{Th2}) and the Procesi bundle on the Hilbert scheme of points on $\C^2$. 


\begin{thebibliography}{99}

\bibitem[AKMV]{AKMV}
M. Aganagic, A. Klemm, M. Mari\~{n}o and C. Vafa, {\em The topological vertex}, Comm. Math. Phys. {\bf 254} (2005), no. 2, 425-478.

\bibitem[AK]{AK}
H. Awata and H. Kanno, {\em Changing the preferred direction of the refined topological vertex}, J. Geom. Phys. {\bf 64} (2013), 91-110.


\bibitem[BF]{BF}
K. Behrend and B. Fantechi, {\em The intrinsic normal cone}, Invent. Math. {\bf 128} (1997), no. 1, 45-88.

\bibitem[CO]{CO}
E. Carlsson and A. Okounkov, {\em Exts and vertex operators}, Duke Math. J. {\bf 161} (2012), no. 9, 1797-1815.


\bibitem[C-FK]{CFK}
I. Ciocan-Fontanine and M. Kapranov, {\em Virtual fundamental classes via dg-manifolds}, Geom. Topol. {\bf 13} (2009), no. 3, 1779-1804.


\bibitem[Da]{Da}
G. Danila, {\em Sur la cohomologie de la puissance sym\'{e}trique du fibr\'{e} tautologique sure le sch\'{e}ma de Hilbert ponctuel d'une surface}, J. Algebraic Geom. {\bf 13} (2004), no. 1, 81-113.


\bibitem[EGL]{EGL}
G. Ellingsrud, L. G\"{o}ttsche and M. Lehn, {\em On the cobordism class of the Hilbert scheme of a surface}, J. Algebraic Geom. {\bf 10} (2001), no. 1, 81-100.

\bibitem[ES]{ES}
G. Ellingsrud and S. A. Str{\o}mme, {\em On the homology of Hilbert scheme of points in the plane}. Invent. Math. {\bf 87} (1987), no. 2, 343-352.


\bibitem[FG]{FG}
B. Fantechi and L. G\"{o}ttsche, {\em Riemann-Roch theorems and elliptic genus for virtually smooth schemes}, Geom. Topol. {\bf 14} (2010), no. 1, 83-115.

\bibitem[GK1]{GK1}
L. G\"{o}ttsche and M. Kool, {\em Virtual refinements of the Vafa-Witten formula}, Comm. Math. Phys. {\bf 376} (2020), no. 1, 1-49.

\bibitem[GK2]{GK2} 
L. G\"{o}ttsche and M. Kool, {\em A rank 2 Dijkgraaf-Moore-Verlinde-Verlinde formula}, Commun. Number Theory Phys. {\bf 13} (2019), no. 1, 165-201. 

\bibitem[GP]{GP}
T. Graber and R. Pandharipande, {\em Localization of virtual classes}, Invent. Math. {\bf 135} (199), no. 2, 487-518.

\bibitem[HIV]{HIV}
T. Hollowood, A. Iqbal and C. Vafa, {\em Matrix models, geometric engineering and elliptic genera}, J. High Energy Phys. {\bf 2008}, no. 3, 069, 81 pp.

\bibitem[IK-P]{IKP} A. Iqbal and A.-K. Kashani-Poor, {\em The vertex on a strip}, Adv. Theor. Math. Phys. {\bf 10} (2006), no. 3, 317-343. 

\bibitem[IKV]{IKV}
A. Iqbal, C. Koz\c{c}az and C. Vafa, {\em The refined topological vertex}, J. High
Energy Phys. {\bf 2009}, no. 10, 069, 58 pp. 

\bibitem[J]{J}
D. Johnson, {\em Universal series for Hilbert schemes and strange duality}, Int. Math. Res. Not. IMRN 2020, no. 10, 3130-3152.


\bibitem[Lee]{Lee}
Y.-P. Lee, {\em Quantum K-theory I: Foundations}, Duke Math. J. {\bf 121} (2004), no. 3, 389-424.

\bibitem[L]{L}
M. Lehn, {\em Chern classes of tautological sheaves on Hilbert schemes of
points on surfaces}, Invent. Math. {\bf 136} (1999), no. 1, 157-207.

\bibitem[LP]{LP}
M. Levine and R. Pandharipande, {\em Algebraic cobordism revisited}, Invent. Math. {\bf 176} (2009), no. 1, 63-130.

\bibitem[LLZ]{LLZ}
J. Li, K. Liu and J. Zhou, {\em Topological string partition functions as equivariant indicies}, Asian J. Math. 10 (2006), no. 1, 81-114.

\bibitem[LT]{LT}
J. Li and G. Tian, {\em Virtual moduli cycles and Gromov-Witten invariants of algebraic varieties}, J. Amer. Math. Soc. 11 {\bf 11} (1998), no. 1, 119-174.

\bibitem[MOP1]{MOP1}
A. Marian, D. Oprea and R. Pandharipande, {\em Segre classes and Hilbert schemes of points}, Ann. Sci. de l'ENS {\bf 50} (2017), no. 1, 239-267.

\bibitem[MOP2]{MOP2}
A. Marian, D. Oprea and R. Pandharipande, {\em The combinatorics of Lehn's conjecture}, J. Math. Soc. Japan {\bf 71} (2019), no. 1, 299-308.

\bibitem[MNOP]{MNOP1}
D. Maulik, N. Nekrasov, A. Okounkov and R. Pandharipande, {\em Gromov-Witten theory and Donaldson Thomas theory, I}, Compos. Math. {\bf 142}  (2006), no. 5, 1263-1285.


\bibitem[M]{M}
A. Mellit, {\em Plethystic identities and mixed Hodge structures of character varieties}, arXiv:1603.00193.


\bibitem[NO]{NO}
N. Nekrasov and A. Okounkov, {\em Membranes and sheaves}, Algebr. Geom. {\bf 3} (2016), no 3., 320-369.

\bibitem[O1]{O}
A. Okounkov, {\em Lectures on K-theoretic computations in enumerative geometry}, Geometry of moduli spaces and representation theory, 251-380, IAS/Park City Math. Ser. {\bf 24}, Amer. Math. Soc., 2017.

\bibitem[O2]{Ot}
A. Okounkov, {\em Takagi lectures on Donaldson-Thomas theory}, Jpn. J. Math. {\bf 14} (2019), no. 1, 67-133.


\bibitem[OR]{OR}
A. Okounkov and N. Reshetikhin, {\em Random Skew Plane Partitions and the Pearcey Process}, Comm. Math. Phys. {\bf 269} (2007), no. 3, 571-609.

\bibitem[ORV]{ORV}
A. Okounkov, N. Reshetikhin and C. Vafa, {\em Quantum Calabi-Yau and classical crystals}, The unity of mathematics, 597-618, Progr. Math., {\bf 244}, Birkh\"{a}user, 2006.


\bibitem[PT]{PT1}
R. Pandharipande and R. P. Thomas, {\em Curve counting via stable pairs in the derived category}, Invent. Math. {\bf 178} (2009), no. 2, 407-447.


\bibitem[Q]{Q}
F. Qu, {\em Virtual pullbacks in K-theory}, Ann. Inst. Fourier {\bf 68} (2018), no. 4, 1609-1641.

\bibitem[Sc1]{Sc2}
L. Scala, {\em Cohomology of the HIlbert scheme of points on a surface with values in representations of tautological bundles}, Duke Math. J., {\bf 150} (2009), no. 2, 211-267.


\bibitem[Sc2]{Sc}
L. Scala, {\em Higher symmetric powers of tautological bundles on Hilbert schemes of points on a surface}, arXiv:1502.07595.



\bibitem[Taka]{Taka}
K. Takasaki, {\em Remarks on partition functions of topological string theory on generalized conifolds}, RIMS Kokyuroku, {\bf 1913} (2014), 182-201.

\bibitem[Taki]{Ta}
M. Taki, {\em Refined topological vertex and instanton counting}, J. High Energy Phys. (2008), no. 3, 048, 22 pp.

\bibitem[Thomas1]{Th}
R. P. Thomas, {\em A holomorphic Casson invariant for Calabi-Yau 3-folds, and bundles on $K3$ fibrations}, J. Differential Geom. {\bf 54} (2000), no. 2, 367-438.

\bibitem[Thomas2]{Th2}
R. P. Thomas, {\em Equivariant K-theory and refined Vafa-Witten invariants }, arXiv:1810.00078.

\bibitem[Thomaso]{T}
R. W. Thomason, {\em Une formule de Lefschetz en K-th\'{e}orie \'{e}quivariante alg\'{e}brique}, Duke Math. J. {\bf 68}  (1992), no. 3, 447-462.

\bibitem[Tz]{Tz}
Y. Tzeng, {\em A proof of the G\"{o}ttsche-Yau-Zaslow formula}, J. Differential Geom. {\bf 90} (2012), no. 3, 439-472.


\bibitem[V]{V}
C. Voisin, {\em Segre classes of tautological bundles on Hilbert schemes of surfaces}, Algebr. Geom. {\bf 6} (2019), no. 2, 186-195.

\bibitem[WZ]{WZ}
Z. Wang and J. Zhou, {\em Tautological sheaves on Hilbert schemes of points}, J. Algebraic Geom. {\bf 23} (2014), no. 4, 669-692.
\end{thebibliography}
\end{document}